\def\R{{ \mathbb{R}}}
\newtheorem{theorem}{Theorem}[section]
\newtheorem{lemma}[theorem]{Lemma}
\newtheorem{proposition}[theorem]{Proposition}
\theoremstyle{definition}
\newtheorem{remark}[theorem]{Remark}
\newtheorem*{remark*}{Remark}
\numberwithin{figure}{section} \numberwithin{equation}{section}
\title{}
\author{}
\begin{document}
\date{}
\title{Populations with individual variation in dispersal in heterogeneous environments: dynamics and competition with simply diffusing populations}

\author{Robert Stephen Cantrell,$^{1,2}$
Chris Cosner,$^{1,2}$  and
Xiao Yu$^{1,2,3}$}
\date{}
\maketitle

\noindent 1. Department of Mathematics, University of Miami, Coral Gables, FL 33146, USA

\noindent 2. Research partially supported by NSF Grant DMS-1514752

\noindent 3. School of Mathematical Sciences, South China Normal University, Guangzhou, Guangdong, 510631, China(Email: xymath19@m.scnu.edu.cn)

\vspace{.50in}

\noindent {\bf Keywords}: reaction-diffusion, ecology and evolutionary biology, population dynamics,  animal behavior, individual variation in dispersal, evolution of dispersal 

\noindent {\bf AMS classifications}: 92D40, 92D50, 35K40, 35K57

\newpage

\begin {abstract} We consider a model for a population in a heterogeneous environment, with logistic-type local population dynamics, under the assumption that  individuals can switch between two different nonzero rates of diffusion.  Such switching behavior has been observed in some natural systems. We study how environmental heterogeneity and the rates of switching and diffusion affect the persistence of the population.  The  reaction-diffusion systems in the models can be cooperative at some population densities and competitive at others. The results  extend our previous work on similar models in homogeneous environments.  We  also consider  competition between two populations that are ecologically identical, but where one population diffuses at a fixed rate and the other switches between two different diffusion rates.  The motivation for that is to gain insight into when switching might be advantageous versus diffusing at a fixed rate. This is a variation on the classical results for ecologically identical competitors with differing fixed diffusion rates, where it is well known that ``the slower diffuser wins".

\end{abstract}

\section{Introduction}
The problem of understanding how dispersal patterns affect population interactions and thus are subject to evolutionarily selection has generated much interest among mathematical biologists.  Classical models for dispersal typically assume that any given type of organism will disperse according to a single pattern or strategy, which may or may not be conditional on environmental conditions.  Various models of that type are discussed in \cite{CCbook,Cosner}.  One specific line of inquiry that has generated significant interest is the problem of deciding which types of dispersal, if any,  are advantageous.   A well known result in that direction is that in environments that vary in space but not in time, if populations that are ecologically identical except for their dispersal pattern compete, and the populations diffuse at different rates, the slower diffuser wins \cite{Hastings, Dockery}.  However, there is considerable evidence that many organisms can switch between different dispersal modes depending on whether they are searching for resources or exploiting them; see \cite{Fleming, Fryxell, Newlands, Prevedello, Rodriguez, Skalski, Ward}. Models that capture the idea of switching between movement modes are developed in \cite{Fagan, Gilliam, Tyson}.  In \cite{CCY} we developed basic theory for a model where a population consists of two sub-populations that diffuse at different rates, individuals can switch between sub-populations,  and where there is logistic-type  self-limitation. Somewhat similar types of models have been proposed in a related but different context, where a population has sub-populations that have different dispersal rates and perhaps different population dynamics and each sub-population is subject to mutations that produce offspring that belong to other sub-populations.  This idea was already discussed in \cite{Dockery}. It has been used to study how dispersal polymorphism can affect the spreading speed of biological invasions\cite{EC,MBC}.  Some very strong and interesting results on traveling waves, spreading speeds, and dynamics for Fisher-KPP models with switching or mutation are presented in  \cite{G1, G2, G3}.  Existence results for equilibria of some related systems on bounded domains are derived in \cite{B, HW}.

 The models for  populations where individuals can switch between two  sub-populations that  we considered in \cite{CCY} and will use here turn out to potentially be  cooperative systems at some densities and  competitive ones at others. Roughly speaking, when switching rates are high, the models are asymptotically cooperative while if switching rates are low they are asymptotically competitive. The version of the model treated in \cite{CCY} had constant coefficients.  In the present paper we will extend some of the results of  \cite{CCY}  to cases  where some coefficients can vary in space.  We will also consider a model for competition on a bounded domain  between a population whose members can switch between two diffusion rates and an otherwise ecologically identical population whose members diffuse at a single intermediate rate.  This is motivated by previous work from the viewpoint of \cite{Dockery} on the evolution of slow diffusion in systems where each competing population has  a single fixed diffusion rate.  See \cite{HN} for more recent results that give a more complete treatment of the case of two competing populations with fixed diffusion rates. We are primarily interested in extending results such as those in  \cite {Dockery, HN}  on how diffusion rates influence competitive interactions to the case where one of the competitors switches between two diffusion rates.  We have chosen to  follow their modeling assumptions and  use no-flux boundary conditions, which are Neumann boundary conditions in our models. The reason for that choice is that  with Dirichlet or Robin boundary conditions, increasing the diffusion rate causes a loss of population across the domain boundary as well as causing different movement patterns in the interior, so it is clear that faster diffusion will be a disadvantage.  However, in the Neumann case, there is no boundary loss so it is very interesting that faster diffusion may still be a disadvantage.  In \cite{CCY} we considered both Neumann and Dirichlet boundary conditions in the case of a single population that could switch between two different diffusion rates.  We found that many of the general abstract results about the models  with Dirichlet conditions were similar to those for models with Neumann conditions, but  there were some differences in more refined specific results, and in some cases Dirichlet conditions caused additional technical difficulties that limited what we could do.  See \cite{CCY} for details.   It would be interesting to consider Dirichlet boundary conditions in the  of models we study in this paper.  That would present some challenges but based on the analysis in \cite{CCY} it should be possible to make some progress.  More generally we think that extending the theory for models with switching to cover a broader range of  dispersal operators, boundary conditions, and population interactions  is an interesting topic for future research.

 It turns out that in our model the result of the competition between the populations with and without switching depends on the relative sizes of the diffusion coefficients  and  on the rates  of switching between faster and slower diffusion by the population that uses two distinct movement modes. In studying competition between populations with and without switching, we assume that the system describing the switching competitor is asymptotically cooperative, so that the full system is eventually cooperative-cooperative-competitive.  This type of system was considered in the case of ordinary differential equations in \cite{Smith1, Smith2}.  It is monotone with respect to  the ordering given by $(u_1,v_1,w_1) \ge (u_2,v_2,w_2) \iff u_1 \ge u_2, v_1 \ge v_2, w_1\le w_2$.  The main methods we will use are primarily monotone dynamical systems theory,  positive operator theory (specifically the Krein-Rutman theorem), and estimates of principal eigenvalues.
 
 The paper is organized as follows. In the  second and third sections, we derive various results on the two-component subsystem describing the population that can switch movement modes.   Some of these are extensions of results from \cite{CCY} to systems where some coefficients vary in space.  In the fourth section we analyze  the stability of semi-trivial equilibria in the full model and give conditions where one or the other of the competing populations will be excluded.  In the fifth section we examine how the stability of the semi-trivial equilibria depends on the switching rates.    We finish with a summary of the conclusions from the analysis.
 
\section{Stability analysis of semi-trivial steady states}
Consider the  system 
\begin{equation}\label{PDEModel}
\aligned
&\frac{\partial u}{\partial t}=d_1\Delta u-\alpha(x) u+\beta (x) v+u(m(x)-u-bv) && \mbox{in}\, (0,\infty)\times \Omega,\\
&\frac{\partial v}{\partial t}=d_2 \Delta v+\alpha(x) u-\beta(x) v+v(m(x)-cu-v)&& \mbox{in}\, (0,\infty)\times\Omega,\\
&\frac{\partial u}{\partial n}=\frac{\partial v}{\partial n}=0 &&\mbox{on}\, (0,\infty)\times\partial \Omega,\\
&u(0,x)=\phi_1(x),\, v(0,x)=\phi_2(x) &&\mbox{in}\, \,\Omega,\\ 
\endaligned
\end{equation}
where $\Omega\subset \R^N$ $(N\ge1)$ is a bounded domain with boundary $\partial \Omega$ of class $C^{2+\theta}$ $(0<\theta\le1)$, $\frac{\partial}{\partial n}$ denotes the differentiation in the direction of outward normal $n$ to $\partial \Omega$.  In general, we will suppose that $0<d_1\le d_2$ and $\alpha,\beta,m\in C^\nu(\bar\Omega)(0<\nu<1)$, $\alpha(x)$ and $\beta(x)$ are non-negative and both positive for some $x_0\in\bar{\Omega}$. We will also assume that $m(x)$   is positive for some $x_1\in \bar{\Omega}$, but we will consider some cases where $m(x)$ changes sign and others where $m(x)$ is positive. The system \eqref{PDEModel} describes the dispersal and population dynamics of a single species that is divided into two groups, for example individuals that are seeking resources and other individuals who have found resources and are exploiting them, and where individuals can switch between groups.  The corresponding model with constant coefficients was studied in \cite{CCY}.  In this section  we will extend some of the ideas and results of \cite{CCY} to cases with variable coefficients. 

The local existence of classical solutions follows from standard
results, see for example the discussion and references in \cite[Sections 1.6.5 and 1.6.6]{CCbook}. Global existence follows if solutions are bounded by some finite $B(T)$ in $[L^\infty(\Omega)]^2$ on any finite time interval $(0,T)$ with $T>0$. Let \begin{equation}\label{fg}
\aligned
g_1(x,u,v)=(m(x)-\alpha(x)-u)u+(\beta(x)-bu)v, \\
g_2(x,u,v)=(m(x)-\beta(x)-v)v+(\alpha(x)-cv)u.
\endaligned
\end{equation}Clearly, there exist $M^+, N^+>0$ such that 
$g_1(x,M^+,v)<0$  and $g_2(x,u,N^+)<0$  for any $(x,u,v)\in\bar{\Omega}\times[0,M^+]\times[0,N^+]$. For such $(x,u,v)$, we have $g_1(x,u,v)\ge u(m-\alpha-bN^+-u)+\beta(x)v$, and $g_2(x,u,v)\ge v(m-\beta-cM^+-v)+\alpha(x)u$. The comparison principle for a scalar parabolic equation applied to each of the equations in \eqref{PDEModel} implies that for any nonnegative and nontrivial initial data, the solution of system \eqref{PDEModel} will stay positive for any $t>0$.  
Indeed, we have the following result on the uniform boundedness of the solution. 
\begin{proposition}\label{Contracting}
	There exist positive numbers $B_1$ and $B_2$, such that for any $M\ge B_1$ and $N\geq B_2$, the rectangular region $[0,M]\times[0,N]$ is invariant and attracting from above,  that is,  $g_1(x,0,v)\geq 0, g_2(x,u,0)\geq 0$, $g_1(x,M,v)<0$ and $g_2(x,u,N)<0$, for any $(x,u,v)\in\bar{\Omega}\times[0,M]\times[0,N]$. Thus, any solution of \eqref{PDEModel} with nonnegative bounded initial data  exists for all $t\ge0$, and eventually lies in the rectangular region $[0,B_1]\times[0,B_2]$. Moreover, if there exist positive numbers $A_1$ and $A_2$ such that $g_1(x,A_1,v)>0$ and $g_2(x,u,A_2)>0$ for any $(x,u,v)\in\bar\Omega\times[A_1,B_1]\times[A_2,B_2]$, then any nontrivial solution of \eqref{PDEModel} with nonnegative bounded initial data  eventually lies in the rectangular region $[A_1,B_1]\times[A_2,B_2]$.
	\end{proposition}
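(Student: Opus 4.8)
My plan is to read the four stated inequalities as the inward-pointing (inflowing) conditions for the reaction on the faces of the rectangle, to extract invariance and global existence from them, and then to upgrade invariance to attraction by comparison with scalar and planar monotone problems. The two lower-face conditions are automatic, since $g_1(x,0,v)=\beta(x)v\ge 0$ and $g_2(x,u,0)=\alpha(x)u\ge 0$ from $\alpha,\beta\ge 0$. For the upper faces I write $g_1(x,M,v)=(m-\alpha-M)M+(\beta-bM)v$ and choose $B_1$ with $B_1>\max_{\bar\Omega} m$ and $bB_1>\max_{\bar\Omega}\beta$; then for every $M\ge B_1$ the coefficient $\beta(x)-bM$ is negative, so the $v$-term is $\le 0$ for all $v\ge 0$ and $g_1(x,M,v)\le(m-\alpha-M)M<0$ uniformly in $x$ and $v$. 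The choice $B_2>\max_{\bar\Omega} m$, $cB_2>\max_{\bar\Omega}\alpha$ is symmetric. The essential point, and the place where positivity of the self-limitation coefficients $b,c$ is used, is that they turn the cross terms $-buv$ and $-cuv$ into sinks once a component is large, so the upper-face conditions hold for all $N$ simultaneously.

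Global existence then follows from the standard continuation criterion. Given nonnegative bounded data, I pick $M\ge B_1$ and $N\ge B_2$ dominating $\phi_1,\phi_2$; the four sign conditions make $[0,M]\times[0,N]$ positively invariant (by the scalar comparison principle applied face by face, i.e.\ invariant-region theory), the solution stays inside it, its $L^\infty$ norm cannot blow up in finite time, and hence the solution is global and bounded uniformly in $t$.

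To obtain attraction from above I would pass to the spatial maximum $P(t)=\max_{\bar\Omega}u(\cdot,t)$. Using the parabolic maximum principle, with the Neumann condition to treat boundary maxima, at a maximizing point $x_t$ one has $\Delta u(x_t,t)\le 0$, so $P'(t)\le g_1(x_t,P,v(x_t,t))$ for almost every $t$. Since $v\ge 0$ and $bP\ge\beta(x_t)$ as soon as $P\ge\max_{\bar\Omega}\beta/b$, the cross term is $\le 0$ and $P'(t)\le(\max_{\bar\Omega} m-P)P$; this scalar logistic inequality forces $\limsup_{t\to\infty}P(t)\le\max\{\max_{\bar\Omega} m,\ \max_{\bar\Omega}\beta/b\}\le B_1$, and the symmetric estimate gives $\limsup_{t\to\infty}\max_{\bar\Omega}v\le B_2$. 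Thus every solution eventually lies in $[0,B_1]\times[0,B_2]$.

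The refinement is the delicate part, because the coupling cannot be removed: raising $u$ above $A_1$ requires $v$ bounded below and vice versa, so the two-dimensional structure is essential. I would run the maximum-principle argument from below, with $p(t)=\min_{\bar\Omega}u$ and $q(t)=\min_{\bar\Omega}v$, which are strictly positive for $t>0$ by the strong maximum principle applied to a nontrivial solution. Once the solution has entered $[0,B_1]\times[0,B_2]$, at a minimizing point one gets $p'\ge G_1(p,q)$ and $q'\ge G_2(p,q)$, where $G_1(P,Q)=\min\{g_1(x,P,w):x\in\bar\Omega,\ w\in[Q,B_2]\}$ and $G_2$ is defined symmetrically. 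Enlarging the lower limit shrinks the set over which the minimum is taken, so $G_1$ is nondecreasing in $Q$ and $G_2$ in $P$: the auxiliary planar ODE $\dot P=G_1(P,Q)$, $\dot Q=G_2(P,Q)$ is cooperative, and the vector comparison principle gives $(p,q)\ge(P,Q)$ for its solution from the same positive data. The hypotheses $g_1(x,A_1,v)>0$ on $[A_2,B_2]$ and $g_2(x,u,A_2)>0$ on $[A_1,B_1]$ say exactly that the lower faces of $[A_1,B_1]\times[A_2,B_2]$ are inflowing for this cooperative ODE and, being uniform in $x$, are precisely what keep $G_1,G_2$ from collapsing the trajectory onto the coordinate axes; a phase-plane analysis of the monotone planar system then drives $(P(t),Q(t))$ up into $[A_1,B_1]\times[A_2,B_2]$, so $\liminf p\ge A_1$, $\liminf q\ge A_2$, and the solution is eventually trapped there. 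The main obstacle throughout is the absence of global monotonicity, since the system is cooperative at low densities and competitive at high ones; the device that rescues each half is the reduction to a genuinely monotone problem, using the high-density sinks for the upper bound and the uniform inflowing conditions, together with the reduction to a cooperative comparison system, for the lower bound.
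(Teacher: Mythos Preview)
Your architecture matches the paper's: for the lower bound you reduce to a planar cooperative ODE via $G_i(P,Q)=\inf\{g_i(x,\ldots)\}$ (the paper writes $g^-_i$) and compare the PDE minima with the ODE trajectory. The paper skips the first half entirely, so your treatment of invariance, global existence, and attraction from above is a legitimate addition.

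The gap is in your last step. You assert that ``a phase-plane analysis of the monotone planar system then drives $(P(t),Q(t))$ up into $[A_1,B_1]\times[A_2,B_2]$,'' but the inflowing hypotheses concern only the lower faces of $[A_1,B_1]\times[A_2,B_2]$; they say nothing about ODE trajectories starting strictly \emph{below} $(A_1,A_2)$, and nothing you have written rules out a stable equilibrium of the cooperative ODE in $(0,A_1)\times(0,A_2)$ that would trap the trajectory. The paper closes this with a structural property of the specific reaction terms that you never invoke: $g$ is \emph{subhomogeneous}, $g_i(x,\gamma u,\gamma v)\ge\gamma\, g_i(x,u,v)$ for $\gamma\in(0,1]$ (for $g_1$ the difference is $\gamma(1-\gamma)(u^2+buv)\ge0$). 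This passes to $g^-_i$, and combined with the monotonicity of $g^-_1$ in its second argument gives $g^-_1(\gamma A_1,v)\ge g^-_1(\gamma A_1,\gamma A_2)\ge\gamma\, g^-_1(A_1,A_2)>0$ for every $\gamma\in(0,1]$ and $v\ge\gamma A_2$, and symmetrically for $g^-_2$. Thus every scaled rectangle $[\gamma A_1,B_1]\times[\gamma A_2,B_2]$ has strictly inflowing lower faces; choosing $\gamma_0$ so that $(\gamma_0 A_1,\gamma_0 A_2)$ lies below the PDE minima, the ODE trajectory from that point is monotone increasing, and its limit (an equilibrium) cannot sit on any scaled lower face, hence lies strictly above $(A_1,A_2)$. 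That is the missing mechanism.
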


\begin{proof}
	We only show the second part of the proof. Let $$g^-_i(u_1,u_2)=\inf\{g_i(x,\theta_1,\theta_2), \theta_i=u_i, (x,\theta_j)\in\bar\Omega\times[u_j,B_j], j\not=i \}, i=1,2.$$
	Then $g^-_i(u_1,u_2),i=1,2$ is Lipschitz continuous in $[0, B_1]\times[0,B_2]$ and $g^-_i(u_1,u_2)$ is nondecreasing with $u_j, j\not=i, i=1,2$. Thus, the ODE system $\displaystyle\frac{du_i}{dt}=g^-_i(u_1,u_2), i=1,2.$ is a cooperative system. By our assumption, we see that $$g^-_1(A_1,v)\ge g^-_1(A_1,A_2)= \inf\{g_1(x,A_1,v),(x,v)\in\bar{\Omega}\times[A_2,B_2]\}>0$$ for any $v\in[A_2,B_2]$  and $g^-_2(u,A_2)>0$ for any $u\in[A_1,B_1]$.
	 Thus $(A_1,A_2)$ is a strict lower solution for the ODE system. Note that $g=(g_1,g_2)$ is subhomogeneous, in the sense that, for any $\gamma\in(0,1]$, $g_i(x,\gamma u_1,\gamma u_2)\ge \gamma g_i(x,u_1,u_2)$, $(u_1,u_2)\in[0,B_1]\times[0,B_2], i=1,2,$ and thus so is $g^-_i,i=1,2$. One can show that for any $\gamma\in(0,1]$, $[\gamma A_1, B_1]\times[\gamma A_2, B_2]$ is contracting from below for the ODE system. Indeed, $g^-_1(\gamma A_1,v)\ge g^-_1(\gamma A_1,\gamma A_2)\ge\gamma g^-_1(A_1,A_2)>0$ for any $v\in[\gamma A_2,B_2]$. Let $U(t,x,\phi)$ be a solution of system \eqref{PDEModel} with $U(0,x,\phi)=(\phi_1,\phi_2)\in(0,B_1]\times(0,B_2]$. Then there exists $\gamma_0\in(0,1)$ such that $\phi_i\ge\gamma_0 A_i>0, i=1,2.$  Let $U_-(t)$ be the solution of $U_t=G^-(U)=(g^-_1(u_1,u_2),g^-_2(u_1,u_2))$ with $U_-(0)=(\gamma_0 A_1,\gamma_0 A_2)$.  It follows from \cite[Theorem 1]{Smoller1}  that $U(t,x,\phi)\ge U_-(t)$ for any $t\ge0$. Indeed, $U^-(t)$ is nondecreasing in $t$ and bounded from above  and thus converges to some positive point.  Since $\lim\limits_{t\to\infty} U_-(t)\ge(A_1+\epsilon,A_2+\epsilon)$ for some small $\epsilon>0$, $U(t,x,\phi)$ will eventually lie in $[A_1,B_1]\times[A_2,B_2]$.
	 \end{proof}

Set $\underline F:= \min_{x\in\bar\Omega}F(x)$ and $\bar{F}:=\max_{x\in\bar\Omega}F(x)$.    Based on the preceding  observations, we obtain the following result.

\begin{proposition}\label{Cases}The following statements are valid.
	\begin{enumerate}
		\item Assume that $\underline{m}$, $\underline{\alpha}$, $\underline{\beta}>0$ and let $ k=\min\{\underline{\alpha}/\bar \alpha,\underline{\beta}/\bar \beta\}\le 1$. If $\displaystyle k>\max\{1-\underline{m}/(b\bar{m}),1-\underline{m}/(c\bar{m})\}$ and $(\bar{\beta},\bar{\alpha})\in S_1:=\{(x,y): \underline{m}+b(k-1)\bar{m}-y-x/b>0,\underline{m}+c(k-1)\bar{m}-x-y/c>0, x>0,y>0\}$, then any solution of system \eqref{PDEModel} with positive bounded initial data eventually lies in $(\bar\beta/b,\bar{m}]\times(\bar\alpha/c,\bar{m}]$, where \eqref{PDEModel} is a competitive system.
		\item Let $k_1:=\max\{\bar\beta/\underline\beta, \bar\alpha/\underline\alpha\}\ge1$. Assume that 
		$k_1<1+k_0$, where $k_0$ is the larger root of $(bx-c)(cx-b)-1=0$. Then any solution of system \eqref{PDEModel}  with positive bounded initial data eventually lies in $\displaystyle(0,\underline\beta/b)\times(0,\underline\alpha/c)$, where \eqref{PDEModel} is a cooperative system, provided $\displaystyle(\underline\beta/b,\underline\alpha/c)\in S_2:=\{(x,y): \bar{m}-x+(b(k_1-1)-c)y<0, \bar{m}-y+(c(k_1-1)-b)x<0, x>0, y>0\}$.
	\end{enumerate}
\end{proposition}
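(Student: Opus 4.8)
The plan is to apply Proposition~\ref{Contracting} twice, choosing the barriers so that the sign conditions on $g_1,g_2$ collapse exactly onto the defining inequalities of $S_1$ and $S_2$. For part (1) I would take the upper barriers $B_1=B_2=\bar m$ and the lower barriers $A_1=\bar\beta/b$, $A_2=\bar\alpha/c$. That $\bar m$ is a legitimate upper barrier is quick: since $\underline\alpha>0$ one has $m(x)-\alpha(x)-\bar m\le-\underline\alpha<0$, while the first inequality defining $S_1$ forces $\bar\beta/b<\bar m$, so $(\beta(x)-b\bar m)v\le 0$ and hence $g_1(x,\bar m,v)<0$; the symmetric computation (using the second $S_1$ inequality) gives $g_2(x,u,\bar m)<0$, and $g_1(x,0,v),g_2(x,u,0)\ge0$ hold by nonnegativity of $\beta,\alpha$. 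For the lower barriers I would substitute $u=\bar\beta/b$ into $g_1$: the coefficient of $v$ is $\beta(x)-\bar\beta\le0$, so the infimum over $v\in[\bar\alpha/c,\bar m]$ occurs at $v=\bar m$; bounding $m\ge\underline m$, $\alpha\le\bar\alpha$, $\beta\ge\underline\beta\ge k\bar\beta$ and factoring out $\bar\beta/b$ yields $g_1(x,\bar\beta/b,v)\ge(\bar\beta/b)\bigl[\underline m+b(k-1)\bar m-\bar\alpha-\bar\beta/b\bigr]$, whose bracket is precisely the first defining inequality of $S_1$; the symmetric estimate handles $g_2$ at $v=\bar\alpha/c$. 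The final assertion of Proposition~\ref{Contracting} then confines the solution eventually to $[\bar\beta/b,\bar m]\times[\bar\alpha/c,\bar m]$, and there $u>\bar\beta/b$ gives $\beta(x)-bu<0$ and $v>\bar\alpha/c$ gives $\alpha(x)-cv<0$, so the system is competitive.

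For part (2) the barriers are reversed: the lower barrier is the trivial one (solutions stay positive), and I would show that $\underline\beta/b,\underline\alpha/c$ are valid upper barriers. Substituting $u=\underline\beta/b$ into $g_1$, the coefficient of $v$ is now $\beta(x)-\underline\beta\ge0$, so the supremum over $v\in[0,\underline\alpha/c]$ occurs at $v=\underline\alpha/c$; bounding $m\le\bar m$, $\alpha\ge\underline\alpha$, $\beta\le\bar\beta\le k_1\underline\beta$ and factoring out $\underline\beta/b$ gives $g_1(x,\underline\beta/b,v)\le(\underline\beta/b)\bigl[\bar m-\underline\beta/b+(b(k_1-1)-c)(\underline\alpha/c)\bigr]$, whose bracket is exactly the first defining inequality of $S_2$; symmetrically for $g_2$ at $v=\underline\alpha/c$ with the second $S_2$ inequality. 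Since these are precisely the conditions needed to run the comparison of Proposition~\ref{Contracting} from above --- dominating the solution by a decreasing trajectory of the cooperative ODE built from the suprema of $g_1,g_2$ rather than their infima --- the solution eventually enters $[0,\underline\beta/b]\times[0,\underline\alpha/c]$, where $u<\underline\beta/b$ and $v<\underline\alpha/c$ make $\beta(x)-bu>0$ and $\alpha(x)-cv>0$, i.e.\ the system is cooperative.

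Two issues need care. To obtain the half-open regions claimed rather than closed ones, I would use that the $S_1,S_2$ inequalities are strict: by continuity the barrier estimates survive replacing $\bar\beta/b,\bar\alpha/c$ by $\bar\beta/b+\epsilon,\bar\alpha/c+\epsilon$ in part (1) and $\underline\beta/b,\underline\alpha/c$ by $\underline\beta/b-\epsilon,\underline\alpha/c-\epsilon$ in part (2) for small $\epsilon>0$, which together with positivity produces the open endpoints. The main obstacle, however, is the role of the condition $k_1<1+k_0$, which does not enter the barrier estimates at all; I expect it is required precisely to guarantee that the two inequalities defining $S_2$ are mutually consistent, i.e.\ that $S_2$ is nonempty so that the hypothesis $(\underline\beta/b,\underline\alpha/c)\in S_2$ is not vacuous. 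Analyzing this should reduce to the sign of the determinant $1-(b(k_1-1)-c)(c(k_1-1)-b)$ of the $2\times2$ linear system whose solution is the corner of $S_2$; this determinant vanishes exactly when $k_1-1$ is a root of $(bx-c)(cx-b)-1=0$, the larger of which is $k_0$. The analogous but simpler single-inequality consistency requirement in part (1) is exactly the stated hypothesis $k>\max\{1-\underline m/(b\bar m),1-\underline m/(c\bar m)\}$, which ensures $\underline m+b(k-1)\bar m>0$ so that $S_1$ admits points with $\bar\alpha,\bar\beta>0$.
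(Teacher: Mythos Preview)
Your proposal is correct and follows essentially the same route as the paper's proof: verify the barrier inequalities of Proposition~\ref{Contracting} with the specific choices $(B_1,B_2)=(\bar m,\bar m)$, $(A_1,A_2)=(\bar\beta/b,\bar\alpha/c)$ for part (1) and with $(B_1,B_2)=(\underline\beta/b,\underline\alpha/c)$ for part (2), then read off the competitive/cooperative structure. Your identification of the conditions $k>\max\{1-\underline m/(b\bar m),1-\underline m/(c\bar m)\}$ and $k_1<1+k_0$ as precisely the nonemptiness conditions for $S_1$ and $S_2$ is exactly what the paper does (it spells out only the $S_2$ case, via the determinant argument you sketched), and your $\epsilon$-perturbation remark to pass from closed to half-open rectangles fills a point the paper leaves implicit.
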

\begin{proof}
	1. Clearly, if $(\bar{\beta},\bar{\alpha})\in S_1$, then $g_1(x,\bar m,v)<(\bar \beta-b\bar{m})v\le0 $ for any $v\in[0,\bar m]$ and $g_2(x,u,\bar{m})<(\bar \alpha-c\bar m)u\le0$ for any $u\in[0,\bar m]$. Moreover, $g_1(x,\frac{\bar\beta}{b},v)\ge (\underline{m}-\bar{\alpha}-\frac{\bar\beta}{b})\frac{\bar\beta}{b}+(\underline \beta- \bar \beta)\bar m\ge\frac{\bar{\beta}}{b}[\underline m-\bar \alpha-\frac{\bar \beta}{b}+b(k-1)\bar m]>0 $ and $g_2(x,u,\frac{\bar\alpha}{c})\ge(\underline{m}-\bar{\beta}-\frac{\bar\alpha}{c})\frac{\bar\alpha}{c}+(\underline \alpha- \bar \alpha)\bar m\ge\frac{\bar{\alpha}}{c}[\underline m-\bar \beta-\frac{\bar \alpha}{c}+c(k-1)\bar m]>0 $ for any $(x,u,v) \in\bar{\Omega}\times[\frac{\overline\beta}{b},\bar{m}]\times[\frac{\overline\alpha}{c},\bar{m}]$.
	It follows immediately from Proposition \ref{Contracting} that the solution of system \eqref{PDEModel} eventually lies in $(\frac{\overline\beta}{b},\bar{m}]\times(\frac{\overline\alpha}{c},\bar{m}].$
	
	2. For $S_2$, if either $b(k_1-1)-c\le0$ or $c(k_1-1)-b\le0$, that is, $k_1\le1+\frac{c}{b}$ or $k_1\le1+\frac{b}{c}$, then $S_2$ is nonempty. If $b(k_1-1)-c>0$ and $c(k_1-1)-c>0$, as long as $(b(k_1-1)-c)(c(k_1-1)-b)<1$, it is still nonempty. Clearly $k_0>\max\{\frac{b}{c},\frac{c}{b}\}$. Thus if $1\le k_1<1+k_0$, $S_2$ is nonempty.
	 Now for any $\displaystyle(\frac{\underline\beta}{b},\frac{\underline\alpha}{c})\in S_2$, we have $$g_1(x,\frac{\underline{\beta}}{b},v)\le\frac{\underline\beta}{b}[\bar{m}-\underline{\alpha}-\frac{\underline{\beta}}{b}+b(k-1)\frac{\underline\alpha}{c}]=\frac{\underline\beta}{b}\left[\bar{m}-\frac{\underline{\beta}}{b}+(b(k-1)-c)\frac{\underline\alpha}{c}\right]<0$$
	 and 
	$$g_2(x,u,\frac{\underline{\alpha}}{c})\le\frac{\underline\alpha}{c}[\bar{m}-\underline{\beta}-\frac{\underline{\alpha}}{c}+c(k-1)\frac{\underline \beta}{b}]=\frac{\underline\alpha}{c}\left[\bar{m}-\frac{\underline{\alpha}}{c}+(c(k-1)-b)\frac{\underline\beta}{b}\right]<0$$
	holds for $(x,u,v)\in\bar\Omega\times[0,\frac{\underline\beta}{b}]\times[0,\frac{\underline\alpha}{c}]$. The  result then follows.
\end{proof}
Throughout this paper, denote by $\lambda(d,e)$ the principal eigenvalue of 
$$\lambda\phi=d\Delta\phi+e (x)\phi \quad \text{in}\,\, \Omega,\quad \frac{\partial\phi}{\partial n}=0 \quad\text{on}\,\, \partial\Omega.$$
Here $e\in L^\infty(\Omega)$. We collect some useful information on $\lambda(d,e)$; refer to \cite{CC1987,CCbook,HN, Hess} and the references therein.
\begin{proposition}\label{PE}
	\begin{enumerate}
		\item [(a)] $\lambda(d,e)$ depends smoothly on $d>0$; depends continuously on $e\in L^\infty(\Omega)$.
		\item[(b)] If $e_1,e_2\in L^\infty(\Omega)$ and $e_1(x)\ge e_2(x) $ in $\Omega$, then $\lambda(d,e_1)\ge\lambda(d,e_2)$ with equality holding if and only if $e_1\equiv e_2$ a.e. in $\Omega$. 
		\item[(c)]If $e$ is non-constant, then $\lambda(d,e)$ is strictly decreasing in $d>0$.
		\item[(d)] Assume that $e$ is non-constant and changes sign. Then
		\begin{enumerate}
			\item [(i)]If $\int_{\Omega}e\ge0$, then $\lambda(d,e)>0$.
			\item[(ii)]If $\int_{\Omega}e<0$, then there exists a unique $\mu^*>0$ independent of $d$ such that, sign$(1-d\mu^*)$=sign $(\lambda(d,e))$.
		\end{enumerate}
		
	\end{enumerate}

\end{proposition}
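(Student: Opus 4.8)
The plan is to derive every assertion from the Rayleigh quotient characterization of the principal eigenvalue together with the fact, supplied by the Krein--Rutman theorem, that $\lambda(d,e)$ is a simple, isolated eigenvalue with a strictly positive eigenfunction $\phi_{d,e}>0$. Testing $\lambda\phi=d\Delta\phi+e\phi$ against $\phi$ and integrating by parts under the Neumann condition gives
$$\lambda(d,e)=\sup_{0\neq\phi\in H^1(\Omega)}\frac{-d\int_\Omega|\nabla\phi|^2+\int_\Omega e\phi^2}{\int_\Omega\phi^2},$$
with the supremum attained exactly at the multiples of $\phi_{d,e}$. Parts (a)--(c) and (d)(i) are then read off this formula, while (d)(ii) requires an additional idea.

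For (a), I would invoke analytic perturbation theory for self-adjoint operators: since $d\Delta+e$ depends linearly on $d$ and on $e$ and the principal eigenvalue is simple and separated from the rest of the spectrum, smooth (indeed analytic) dependence on $d>0$ follows, and continuity in $e\in L^\infty(\Omega)$ is immediate from the quotient, since perturbing $e$ changes the numerator by at most $\|e_1-e_2\|_\infty\int\phi^2$, giving $|\lambda(d,e_1)-\lambda(d,e_2)|\le\|e_1-e_2\|_\infty$. For (b), monotonicity is immediate because for each fixed $\phi$ the numerator is nondecreasing in $e$; for strictness I would insert the positive eigenfunction $\phi_2:=\phi_{d,e_2}$ into the quotient for $e_1$ to obtain
$$\lambda(d,e_1)\ge\lambda(d,e_2)+\frac{\int_\Omega(e_1-e_2)\phi_2^2}{\int_\Omega\phi_2^2},$$
where the last term is strictly positive whenever $e_1\ge e_2$ with $e_1\not\equiv e_2$, since $\phi_2>0$ throughout $\Omega$.

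For (c), normalizing $\phi=\phi_{d,e}$ by $\int\phi^2=1$, differentiating the eigenvalue equation in $d$, multiplying by $\phi$, integrating, and using self-adjointness together with $d\Delta\phi=(\lambda-e)\phi$ collapses the cross terms to the Hadamard-type formula $\lambda'(d)=-\int_\Omega|\nabla\phi|^2\le0$; this vanishes only if $\phi$ is constant, which would force $e\equiv\lambda$ constant, contradicting the hypothesis, so $\lambda'(d)<0$ strictly. For (d)(i), testing with $\phi\equiv1$ gives $\lambda(d,e)\ge\frac{1}{|\Omega|}\int_\Omega e\ge0$; if the integral is positive we are done, and if it is zero the inequality is still strict, since equality would make $\phi\equiv1$ a maximizer, hence the principal eigenfunction, forcing $e\equiv0$ and contradicting that $e$ changes sign.

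The main work is (d)(ii), where I would recognize the zero of $\lambda(\cdot,e)$ through the associated indefinite-weight problem. Define $\mu^*$ to be the principal (smallest) positive eigenvalue of
$$-\Delta\psi=\mu\,e(x)\psi\ \ \text{in}\ \Omega,\qquad \frac{\partial\psi}{\partial n}=0\ \ \text{on}\ \partial\Omega,$$
which is manifestly independent of $d$; the hypotheses that $e$ changes sign and $\int_\Omega e<0$ are precisely those guaranteeing, by the classical theory of indefinite weights (as in \cite{Hess,CC1987}), that such a $\mu^*>0$ exists, is unique, and is the only positive eigenvalue with a positive eigenfunction. The linking observation is that $\lambda(d,e)=0$ holds exactly when $d\Delta\phi+e\phi=0$ admits a positive solution, i.e. exactly when $1/d=\mu^*$; since by (c) the map $d\mapsto\lambda(d,e)$ is continuous and strictly decreasing, it changes sign once, at $d=1/\mu^*$, which is the assertion $\mathrm{sign}(1-d\mu^*)=\mathrm{sign}(\lambda(d,e))$. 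I expect the delicate points to be the existence, uniqueness, and positivity theory for $\mu^*$ under a sign-changing weight, and the equivalence ``$\lambda(d,e)=0\iff$ a positive solution of $d\Delta\phi+e\phi=0$ exists,'' the latter again resting on positivity of the principal eigenfunction from Krein--Rutman. As a cross-check on the direction of the sign one can verify the endpoints $\lambda(d,e)\to\max_{\bar\Omega}e>0$ as $d\to0^+$ and $\lambda(d,e)\to\frac{1}{|\Omega|}\int_\Omega e<0$ as $d\to\infty$, consistent with a single sign change at $1/\mu^*$.
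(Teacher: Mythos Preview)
Your proposal is correct. Note that the paper does not actually prove this proposition: it is stated as a collection of known facts with the remark ``refer to \cite{CC1987,CCbook,HN,Hess} and the references therein,'' so there is no paper proof to compare against. Your argument via the Rayleigh quotient, the Hadamard formula $\lambda'(d)=-\int_\Omega|\nabla\phi_{d,e}|^2$, and the identification of the zero of $d\mapsto\lambda(d,e)$ with the principal eigenvalue $\mu^*$ of the indefinite-weight Neumann problem $-\Delta\psi=\mu\,e\,\psi$ is precisely the standard route taken in those references (in particular \cite{CC1987} and \cite{Hess}), so you have effectively reconstructed what the paper is citing.
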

 By the celebrated Krein-Rutman Theorem, the eigenvalue problem \begin{equation}\label{EP}
 \aligned
&\lambda \phi_1=d_1\Delta \phi_1-\alpha(x)\phi_1+\beta(x)\phi_2+m(x)\phi_1&&\mbox{in}\,\, \Omega,\\
&\lambda\phi_2=d_2 \Delta \phi_2+\alpha(x) \phi_1-\beta(x)\phi_2+m(x)\phi_2&&\mbox{in}\,\, \Omega,\\
& \frac{\partial \phi_1}{\partial n}=\frac{\partial \phi_2}{\partial n}=0  &&\mbox{on}\,\, \partial\Omega 
\endaligned
\end{equation}
admits a principal eigenvalue $\lambda_0$ with a positive eigenfunction $\psi=(\psi_1,\psi_2)$. See \cite{Amann, Lam,Gomez}. Clearly, if $m>0$ on $\bar\Omega$, then $\lambda_0>0$; if $m<0$ on $\bar\Omega$, then $\lambda_0<0$.  An interesting question  arises when $m$ changes sign. In that case,  what kind of sufficient conditions will guarantee that $\lambda_0>0$, so that $0$ is linearly unstable?   Next, we explore some sufficient conditions through some simple investigation.

\begin{proposition}\label{sc}Assume that $m$ changes sign.  Then the following statements are valid.
	\begin{enumerate}
		\item [(i)]If $\max\{\lambda(d_1,m-\alpha),\lambda(d_2,m-\beta)\}\ge0$, then $\lambda_0>0$;
		\item[(ii)] 	If $\int_{\Omega}m\ge\frac{\int_{\Omega}(\sqrt{\alpha}-\sqrt{\beta})^2}{2}$, then $\lambda_0>0$.
	\end{enumerate}
\end{proposition}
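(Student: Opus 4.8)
The plan is to deduce positivity of the system's principal eigenvalue $\lambda_0$ by bounding it below by scalar principal eigenvalues of the form $\lambda(d,e)$ and then invoking Proposition~\ref{PE}. Throughout, let $\psi=(\psi_1,\psi_2)$ be the principal eigenfunction of \eqref{EP}; by the Krein--Rutman theorem it may be taken strictly positive and of class $C^2$ on $\bar\Omega$, with $\partial\psi_1/\partial n=\partial\psi_2/\partial n=0$.

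For part (i) I would read a scalar differential inequality off each line of \eqref{EP}. Since $\beta\psi_2\ge 0$, the first equation gives $d_1\Delta\psi_1+(m-\alpha)\psi_1=\lambda_0\psi_1-\beta\psi_2\le\lambda_0\psi_1$, so $\psi_1>0$ is a positive supersolution of $d_1\Delta+(m-\alpha)$ at level $\lambda_0$. Testing this against the positive principal eigenfunction $\varphi$ of $d_1\Delta+(m-\alpha)$ and using Green's identity with the Neumann condition yields $\bigl(\lambda_0-\lambda(d_1,m-\alpha)\bigr)\int_\Omega\psi_1\varphi=\int_\Omega\beta\psi_2\varphi$. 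The right-hand side is strictly positive because $\beta\not\equiv 0$ (it is positive near $x_0$) and $\psi_2,\varphi>0$, so $\lambda_0>\lambda(d_1,m-\alpha)$; the symmetric computation on the second equation gives $\lambda_0>\lambda(d_2,m-\beta)$. Hence $\lambda_0>\max\{\lambda(d_1,m-\alpha),\lambda(d_2,m-\beta)\}\ge 0$, which is (i).

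Part (ii) is the substantial one, and the device I would use is a diffusion-weighted geometric mean of the two components. Set $a=d_1/(d_1+d_2)$ and $b=d_2/(d_1+d_2)$, so that $a+b=1$ and $a/d_1=b/d_2=:p=1/(d_1+d_2)$, and define $w=\psi_1^{a}\psi_2^{b}>0$. Writing $u=\log\psi_1$, $v=\log\psi_2$ and using $\Delta\psi_i/\psi_i=\Delta\log\psi_i+|\nabla\log\psi_i|^2$, a direct computation gives the identity $\Delta w/w=a\,\Delta\psi_1/\psi_1+b\,\Delta\psi_2/\psi_2-ab|\nabla u-\nabla v|^2$, hence in particular $\Delta w/w\le a\,\Delta\psi_1/\psi_1+b\,\Delta\psi_2/\psi_2$. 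Substituting $\Delta\psi_1/\psi_1=d_1^{-1}[\lambda_0-m+\alpha-\beta\,\psi_2/\psi_1]$ and $\Delta\psi_2/\psi_2=d_2^{-1}[\lambda_0-m+\beta-\alpha\,\psi_1/\psi_2]$ from \eqref{EP}, the two cross terms combine through the elementary bound $p\beta\,\psi_2/\psi_1+p\alpha\,\psi_1/\psi_2\ge 2p\sqrt{\alpha\beta}$, which together with the diagonal contribution $p\alpha+p\beta$ produces exactly $p(\sqrt\alpha-\sqrt\beta)^2$. Collecting terms gives $\Delta w/w\le 2p(\lambda_0-m)+p(\sqrt\alpha-\sqrt\beta)^2$; multiplying by $\bar d:=(d_1+d_2)/2=1/(2p)$ and rearranging yields
\[
\bar d\,\Delta w+\Bigl[m-\tfrac12(\sqrt\alpha-\sqrt\beta)^2\Bigr]w\le\lambda_0\,w \quad\text{in }\Omega, \qquad \frac{\partial w}{\partial n}=0 \ \text{on }\partial\Omega.
\]
Thus $w>0$ is a positive supersolution of the scalar operator $\bar d\,\Delta+e^\ast$ with $e^\ast:=m-\tfrac12(\sqrt\alpha-\sqrt\beta)^2$ at level $\lambda_0$, so the supersolution characterization of the principal eigenvalue gives $\lambda(\bar d,e^\ast)\le\lambda_0$.

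It then remains to certify $\lambda(\bar d,e^\ast)>0$ through Proposition~\ref{PE}(d). Since $e^\ast\le m$ and $m<0$ somewhere, $e^\ast$ is negative on an open set; the hypothesis $\int_\Omega m\ge\tfrac12\int_\Omega(\sqrt\alpha-\sqrt\beta)^2$ says precisely that $\int_\Omega e^\ast\ge 0$, which forces $e^\ast$ to be positive on a set of positive measure, hence sign-changing and non-constant. Proposition~\ref{PE}(d)(i) then gives $\lambda(\bar d,e^\ast)>0$, so $\lambda_0\ge\lambda(\bar d,e^\ast)>0$, proving (ii). The crux of the whole argument is the choice of exponents in $w=\psi_1^{a}\psi_2^{b}$: they must be picked so that the two cross terms acquire a common coefficient $p=a/d_1=b/d_2$, since only then does the AM--GM step collapse the coupling and diagonal terms into the clean quantity $\tfrac12(\sqrt\alpha-\sqrt\beta)^2$ against the averaged diffusion $\bar d$. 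I expect verifying the logarithmic Laplacian identity and tracking this bookkeeping for general $d_1\ne d_2$ to be the only delicate computation; everything else follows from the supersolution comparison and Proposition~\ref{PE}.
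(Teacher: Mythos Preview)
Your proof of part (i) is essentially identical to the paper's: both read off the scalar inequality from each equation of \eqref{EP} and compare against the scalar principal eigenvalue to get $\lambda_0>\lambda(d_1,m-\alpha)$ and $\lambda_0>\lambda(d_2,m-\beta)$.

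For part (ii) your argument is correct but takes a genuinely different and more elaborate route than the paper. The paper simply divides the $i$th equation of \eqref{EP} by $\psi_i$, integrates, and adds, obtaining
\[
2\lambda_0|\Omega|=d_1\!\int_\Omega\!\Bigl|\frac{\nabla\psi_1}{\psi_1}\Bigr|^2+d_2\!\int_\Omega\!\Bigl|\frac{\nabla\psi_2}{\psi_2}\Bigr|^2+2\!\int_\Omega m-\!\int_\Omega\alpha-\!\int_\Omega\beta+\!\int_\Omega\!\Bigl(\beta\frac{\psi_2}{\psi_1}+\alpha\frac{\psi_1}{\psi_2}\Bigr),
\]
drops the (strictly positive, since $\psi$ is not componentwise constant) gradient terms, and applies AM--GM to the last integral; this is a two-line computation that yields $\lambda_0>0$ directly from the integral hypothesis without any appeal to Proposition~\ref{PE}. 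Your approach instead packages $\psi_1,\psi_2$ into the weighted geometric mean $w=\psi_1^{a}\psi_2^{b}$, produces a scalar differential inequality $\bar d\,\Delta w+e^\ast w\le\lambda_0 w$, and then invokes the supersolution characterization together with Proposition~\ref{PE}(d). Both arguments ultimately hinge on the same AM--GM step on $\beta\psi_2/\psi_1+\alpha\psi_1/\psi_2$; the paper's version is shorter and needs no auxiliary scalar eigenvalue problem, while yours has the modest advantage of exhibiting an explicit scalar lower bound $\lambda_0\ge\lambda\bigl(\tfrac{d_1+d_2}{2},\,m-\tfrac12(\sqrt\alpha-\sqrt\beta)^2\bigr)$, which could be of independent interest.
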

\begin{proof}
	Observe that $(\lambda_0 I-L_1)\phi=\beta\psi_2\ge0\not\equiv0$ in $\Omega$ has a unique positive solution $\psi_1$, where $L_1\phi:=d_1\Delta \phi_1+(m(x)-\alpha(x))\phi_1$ with zero Neumann boundary condition. This yields that $\lambda_0>s(L_1)=\lambda(d_1,m-\alpha)$. Similarly, we have $\lambda_0>\lambda(d_2,m-\beta)$. Consequently, statement (i) holds true.
	
	Note that the components of the positive eigenfunction $\psi=(\psi_1,\psi_2)$ associated with $\lambda_0$ can not both be constant. Otherwise, adding equations of \eqref{EP} together, we obtain that $\lambda_0=m(x)$, which is impossible. Now dividing equations of \eqref{EP} by $\psi_i, i=1,2$ and integrating over $\Omega$, respectively, we have 
	\begin{align*}
	2\lambda_0&=d_1\int_{\Omega}\left|\frac{\triangledown \psi_1}{\psi_1}\right|^2+d_2\int_{\Omega}\left|\frac{\triangledown \psi_2}{\psi_2}\right|^2+2\int_{\Omega}m-\int_{\Omega}\alpha-\int_{\Omega}\beta
		+\int_{\Omega}\left(\beta\frac{\psi_2}{\psi_1}+\alpha\frac{\psi_1}{\psi_2}\right)\\&>2\int_{\Omega}m-\int_{\Omega}\alpha-\int_{\Omega}\beta+2\int_{\Omega}\sqrt{\alpha\beta}\ge2\int_{\Omega}m-\int_{\Omega}(\sqrt\alpha-\sqrt\beta)^2\ge0.
		\end{align*}\end{proof} 
We can also examine our eigenvalue problem \eqref{EP},  by inserting a parameter $\mu$ multiplying $m$ and considering how the  principal eigenvalue  depends on $\mu$. Let $\lambda(\mu), \mu\in\R $ be the principal eigenvalue of  
\begin{equation}\label{EP1}
\aligned
&\lambda \phi_1=d_1\Delta \phi_1-\alpha(x)\phi_1+\beta(x)\phi_2+\mu m(x)\phi_1&&\mbox{in}\,\, \Omega,\\
&\lambda\phi_2=d_2 \Delta \phi_2+\alpha(x) \phi_1-\beta(x)\phi_2+\mu m(x)\phi_2&&\mbox{in}\,\, \Omega,\\
& \frac{\partial \phi_1}{\partial n}=\frac{\partial \phi_2}{\partial n}=0  &&\mbox{on}\,\, \partial\Omega. 
\endaligned
\end{equation}
It is easy to see that the principal eigenvalue $\lambda(0)$ of \eqref{EP1} is zero  with a positive eigenfunction $(\Phi^*_1,\Phi^*_2)$.   The principal eigenvalue $\lambda(\mu)$ is always simple and isolated by Theorem  4.1 of \cite{Gomez}, so it is analytic in $\mu$  by results from  Ch. 7, section 1 and Ch.2, section 1 of \cite{Katobook}.  The operator on the right side of side of  \eqref{EP1}  has a positive resolvent, so  $\lambda(\mu)$  is convex in $\mu$ by results of \cite{Kato82}.  Let $\tilde{\lambda}(\mu)$ and $\tilde{\phi}>0$ be the principal eigenvalue and eigenfunction for
$$\lambda \phi_1=d_1\Delta \phi-\alpha(x)\phi_1+\mu m(x)\phi \quad \mbox{in}\,\, \Omega,\,\, \frac{\partial \phi}{\partial n}=0\quad \mbox{on}\,\, \partial\Omega.$$
Since we assume that $m(x)>0$ for some $x$, Lemma 15.4 of \cite{Hess} implies that $\tilde{\lambda}(\mu) \to \infty$  as  $\mu \to \infty$. (The  notation of \cite{Hess} switches the roles of 
$\lambda$ and $\mu$ we use in our notation  and puts a minus sign on the eigenvalues corresponding to those we denote by  $\lambda$.) Finally, if we multiply the first equation of \eqref {EP1} by  $\tilde{\phi}$, integrate over  $\Omega$, then use Green's formula and the equation for $\phi$ we obtain
$$[\lambda(\mu)-\tilde{\lambda}(\mu)]\int_{\Omega}\phi_1\tilde{\phi}=\int_{\Omega}\beta \phi_2\tilde{\phi}>0,$$
so that $\lambda(\mu)>\tilde{\lambda}(\mu)$ and hence  $\lambda(\mu) \to \infty$ as $\mu \to \infty$. Alternatively, we can show that $\lambda(\mu)$ and the normalized eigenfunctions associated with it are differentiable by methods similar to those used in the proof of Lemma 1.2 of \cite{CC1987}.   (We show the details of a similar argument later in this paper, in the proof of Lemma 5.1.) We have the following observation:
\begin{proposition}\label{thresh}
	Assume that  $m$ changes sign. Then the following statements are valid. 
	\begin{enumerate}
		\item[(i)] If $\int_{\Omega}m(\Phi^*_1+\Phi^*_2)\ge0$, then $\lambda_0>0$.
		\item[(ii)]If $\int_{\Omega}m(\Phi^*_1+\Phi^*_2)<0$, then there exists a unique positive $\mu^0>0$ such that sign$(1-\mu^0)=$ sign$(\lambda_0)$. 
	\end{enumerate}
\end{proposition}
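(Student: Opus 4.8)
The plan is to reduce both statements to the sign of the derivative $\lambda'(0)$ and then exploit the three facts already established for the map $\mu\mapsto\lambda(\mu)$ in the discussion preceding the statement: it is differentiable (indeed analytic), it is convex, and $\lambda(\mu)\to\infty$ as $\mu\to\infty$, with $\lambda(0)=0$ attained at the positive eigenfunction $(\Phi^*_1,\Phi^*_2)$. Since the original problem \eqref{EP} is exactly \eqref{EP1} with $\mu=1$, we have $\lambda_0=\lambda(1)$, so each part becomes an assertion about where the convex curve $\lambda(\mu)$ lies relative to $0$ at the point $\mu=1$.

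First I would compute $\lambda'(0)$. Differentiating \eqref{EP1} in $\mu$ at $\mu=0$ and writing $(\dot\phi_1,\dot\phi_2)$ for the derivative of the normalized eigenfunction there, one obtains, using $\lambda(0)=0$, the linearized pair
\begin{equation*}
\aligned
&m\Phi^*_1+d_1\Delta\dot\phi_1-\alpha\dot\phi_1+\beta\dot\phi_2=\lambda'(0)\Phi^*_1,\\
&m\Phi^*_2+d_2\Delta\dot\phi_2+\alpha\dot\phi_1-\beta\dot\phi_2=\lambda'(0)\Phi^*_2.
\endaligned
\end{equation*}
The crucial structural feature is that the switching terms $-\alpha\dot\phi_1+\beta\dot\phi_2$ and $\alpha\dot\phi_1-\beta\dot\phi_2$ are negatives of one another: adding the two equations and integrating over $\Omega$, the coupling cancels and the Neumann condition kills $\int_\Omega(d_1\Delta\dot\phi_1+d_2\Delta\dot\phi_2)$, leaving
\begin{equation*}
\lambda'(0)\int_\Omega(\Phi^*_1+\Phi^*_2)=\int_\Omega m(\Phi^*_1+\Phi^*_2).
\end{equation*}
Since $\Phi^*_1,\Phi^*_2>0$ the coefficient on the left is positive, so $\operatorname{sign}\lambda'(0)=\operatorname{sign}\int_\Omega m(\Phi^*_1+\Phi^*_2)$. (Equivalently, because the columns of the switching matrix sum to zero, $(1,1)$ is the principal eigenfunction of the adjoint of the $\mu=0$ operator; pairing the perturbed equation against it gives the same formula.)

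With this in hand the two cases follow from convexity. In case (i), $\int_\Omega m(\Phi^*_1+\Phi^*_2)\ge0$ gives $\lambda'(0)\ge0$. If $\lambda'(0)>0$, convexity yields $\lambda(\mu)\ge\lambda'(0)\mu>0$ for $\mu>0$, so $\lambda_0=\lambda(1)>0$. If $\lambda'(0)=0$, then $\mu=0$ is a global minimum of the convex function $\lambda$, so $\lambda\ge0$; were $\lambda(1)=0$ as well, convexity would force $\lambda\equiv0$ on $[0,1]$ and then analyticity would give $\lambda\equiv0$, contradicting $\lambda(\mu)\to\infty$, so again $\lambda_0>0$. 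In case (ii), $\lambda'(0)<0$, and I would argue via the sublevel set $\{\mu:\lambda(\mu)<0\}$, which is an open interval by convexity. It contains points just to the right of $0$ (since $\lambda'(0)<0$), it is bounded above (since $\lambda(\mu)\to\infty$), and its left endpoint is exactly $0$ (since $\lambda(\mu)\ge\lambda'(0)\mu>0$ for $\mu<0$ and $\lambda(0)=0$); hence it equals $(0,\mu^0)$ for a unique $\mu^0>0$ with $\lambda(\mu^0)=0$ and $\lambda>0$ on $(\mu^0,\infty)$. Comparing $\mu=1$ to $\mu^0$ then reads off $\operatorname{sign}(1-\mu^0)=\operatorname{sign}\lambda(1)=\operatorname{sign}\lambda_0$.

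The only genuinely delicate points are the two just flagged: recognizing that summing the two equations and integrating annihilates both the diffusion and the switching contributions — this is precisely what makes the threshold quantity the unweighted integral $\int_\Omega m(\Phi^*_1+\Phi^*_2)$ rather than a version weighted by an adjoint eigenfunction — and, in the borderline subcase $\int_\Omega m(\Phi^*_1+\Phi^*_2)=0$, upgrading $\lambda_0\ge0$ to the strict inequality $\lambda_0>0$, where analyticity of $\lambda(\mu)$, rather than mere convexity, is what does the work.
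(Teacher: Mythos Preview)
Your proof is correct and follows essentially the same approach as the paper's: both compute $\lambda'(0)$ by differentiating \eqref{EP1} at $\mu=0$, adding the two equations, and integrating (so the switching terms cancel and Neumann kills the Laplacians), and then both exploit convexity, analyticity, and $\lambda(\mu)\to\infty$ to determine the sign of $\lambda(1)=\lambda_0$. The only cosmetic differences are that the paper treats the $\lambda'(0)=0$ subcase via strict monotonicity of $\lambda'$ rather than your ``$\lambda\equiv 0$ on $[0,1]$ plus analyticity'' argument, and handles case~(ii) directly rather than through the sublevel-set description; these are equivalent repackagings of the same convexity/analyticity reasoning.
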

\begin{proof}
 If $\lambda'(0)>0$, it then easily follows from the convexity of $\lambda(\mu)$ that $\lambda(\mu)>\lambda(0)=0, \forall\mu>0$. If $\lambda'(0)=0$, since $\lambda(\mu)$ is analytic, convex and $\lambda(\infty)=\infty$, we have $\lambda'(\mu)>\lambda'(0)=0$ for $\mu>0$. Thus, $\lambda(\mu)>\lambda(0)=0$. If $\lambda'(0)<0$, then $\lambda(\mu)<0$ for $0<\mu \ll 1$. Note that $\lambda(\infty)=\infty$, we infer that there exists a $\mu^0>0$ such that $\lambda(\mu^0)=0$. Now the convexity and analyticity of $\lambda(\mu)$ yield that $\mu^0$ has to be unique. Moreover, $\lambda(\mu)<0$ when $0<\mu<\mu^0$, and $\lambda(\mu)>0$ when $\mu>\mu^0$.	
 
Next, we compute $\lambda'(0)$. Let $(\tilde{\Phi}_1(x,\mu),\tilde{\Phi}_2(x,\mu))$ be the positive eigenfunction associated with $\lambda(\mu)$. By  arguments similar to those used in the proofs of Lemma 1.2 of \cite{CC1987} and Lemma \ref{beta} of the present paper, we can  differentiate \eqref{EP1} with respect to $\mu$ at $\mu=0$. It then follows that 
\begin{eqnarray}\label{EP3}
&&\lambda'(0) \Phi^*_1=d_1\Delta \tilde{\Phi}_{1\mu}-\alpha(x)\tilde{\Phi}_{1\mu}+\beta(x)\tilde{\Phi}_{2\mu}+ m(x)\Phi^*_1,\nonumber\\
&&\lambda'(0) \Phi^*_2=d_2 \Delta \tilde{\Phi}_{2\mu}+\alpha(x) \tilde{\Phi}_{1\mu}-\beta(x)\tilde{\Phi}_{2\mu}+ m(x)\Phi^*_2,
\end{eqnarray}
where $\tilde{\Phi}_{i\mu}=\frac{\partial \tilde{\Phi}_{i}}{\partial \mu}(x,0), \, i=1,2$.
Adding the above equations together and integrating over $\Omega$, we obtain that
$$\lambda'(0)=\frac{\int_\Omega m(\Phi^*_1+\Phi^*_2)}{\int_\Omega (\Phi^*_1+\Phi^*_2)}.$$ 
\end{proof}
Note that
\begin{eqnarray}\label{EP2}
&&0=d_1\Delta \Phi^*_1-\alpha(x)\Phi^*_1+\beta(x)\Phi^*_2,\nonumber\\
&&0=d_2 \Delta \Phi^*_2+\alpha(x) \Phi^*_1-\beta(x)\Phi^*_2,\\
&& \frac{\partial \Phi^*_1}{\partial n}=\frac{\partial \Phi^*_2}{\partial n}=0\quad \nonumber
\end{eqnarray} implies $\Delta(d_1\Phi^*_1+d_2\Phi^*_2)=0$ in $\Omega$, and $\frac{\partial (d_1\Phi^*_1+d_2\Phi^*_2)}{\partial n}=0$ on $\partial \Omega$. Therefore, $d_1\Phi^*_1+d_2\Phi^*_2=C>0$ for some constant $C$. Then substitute  $\Phi^*_2=\frac{C}{d_2}-\frac{d_1}{d_2}\Phi^*_1$ into the first equation of \eqref{EP2} and integrate over $\Omega$, it gives
$$0=-\int_{\Omega}\alpha\Phi^*_1+\int_{\Omega}\beta\left[\frac{C}{d_2}-\frac{d_1}{d_2}\Phi^*_1\right],$$
and hence, $\displaystyle C=\frac{\int_{\Omega}(d_2\alpha+d_1\beta)\Phi^*_1}{\int_{\Omega}\beta}$.  It now follows that 
$\int_\Omega m(\Phi^*_1+\Phi^*_2)=[1-\frac{d_1}{d_2}]\int_{\Omega}\Phi^*_1m+\frac{C}{d_2}\int_{\Omega}m$. Therefore, sign($\lambda'(0)$) is the same as that of $[1-\frac{d_1}{d_2}]\int_{\Omega}\Phi^*_1m+\frac{C}{d_2}\int_{\Omega}m$.

Suppose that $\alpha(x)=k\beta(x)$ for some constant $k>0$. Then $(\Phi^*_1,\Phi^*_2)$ is constant, in fact we can choose $(\Phi^*_1,\Phi^*_2)=c_0(1,k)$, and as a consequence, Proposition \ref{thresh} gives the following result.
\begin{proposition}\label{ST}
	Assume that  $m$ changes sign and $\alpha(x)=k\beta(x)$ for some constant $k>0$. Then the following statements are valid. 
	\begin{enumerate}
		\item[(i)] If $\int_{\Omega}m\ge0$, then $\lambda_0>0$.
		\item[(ii)]If $\int_{\Omega}m<0$, then there exists a unique positive $\mu^*>0$ such that sign$(1-\mu^*)=$ sign$(\lambda_0)$. 
	\end{enumerate}
\end{proposition}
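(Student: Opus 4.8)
The plan is to obtain Proposition \ref{ST} as an immediate specialization of Proposition \ref{thresh}, exploiting the structural simplification forced by the hypothesis $\alpha(x)=k\beta(x)$. The first step is to pin down the principal eigenfunction $(\Phi^*_1,\Phi^*_2)$ of the $\mu=0$ problem, namely the solution of \eqref{EP2}. I would check directly that the constant vector $(\Phi^*_1,\Phi^*_2)=c_0(1,k)$ solves \eqref{EP2}: the diffusion terms vanish because the components are constant, while the reaction terms cancel since $-\alpha\Phi^*_1+\beta\Phi^*_2=-k\beta c_0+k\beta c_0=0$ and likewise $\alpha\Phi^*_1-\beta\Phi^*_2=0$; the Neumann condition is trivially satisfied. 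Because the principal eigenvalue supplied by the Krein--Rutman framework is simple with a positive eigenfunction that is unique up to a scalar multiple, this constant vector is, after normalization, exactly the eigenfunction associated with $\lambda(0)=0$.

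The second step is to evaluate the sign criterion of Proposition \ref{thresh} under this form of the eigenfunction. Since $\Phi^*_1+\Phi^*_2=c_0(1+k)$ is a strictly positive constant, the weighted integral collapses to
$$\int_{\Omega}m(\Phi^*_1+\Phi^*_2)=c_0(1+k)\int_{\Omega}m,$$
whose sign coincides with that of $\int_{\Omega}m$. In particular $\int_{\Omega}m\ge 0$ is equivalent to $\int_{\Omega}m(\Phi^*_1+\Phi^*_2)\ge 0$, and the corresponding strict inequalities are equivalent as well.

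The final step is to transfer the two cases of Proposition \ref{thresh}. Recalling that \eqref{EP1} reduces to the original problem \eqref{EP} at $\mu=1$, so that $\lambda_0=\lambda(1)$, case (i) of Proposition \ref{thresh} gives $\lambda_0>0$ whenever $\int_{\Omega}m\ge 0$, which is statement (i). When $\int_{\Omega}m<0$, case (ii) of Proposition \ref{thresh} yields a unique positive threshold $\mu^0$ with $\mathrm{sign}(1-\mu^0)=\mathrm{sign}(\lambda_0)$; relabelling $\mu^0$ as $\mu^*$ produces statement (ii).

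I do not expect a genuine obstacle, as the proposition is essentially a corollary once the explicit constant eigenfunction is in hand. The only point warranting a little care is confirming that the constant solution really is the \emph{principal} eigenfunction rather than some other element of the kernel of the $\mu=0$ operator; this is guaranteed by the simplicity of the principal eigenvalue and the positivity of $c_0(1,k)$, both already built into the Krein--Rutman setup used for \eqref{EP1}.
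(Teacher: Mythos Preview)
Your proposal is correct and follows exactly the paper's approach: the paper simply observes that when $\alpha(x)=k\beta(x)$ the eigenfunction $(\Phi^*_1,\Phi^*_2)$ of \eqref{EP2} can be taken to be the constant $c_0(1,k)$, and then Proposition~\ref{thresh} immediately yields the result. Your write-up is in fact more detailed than the paper's one-line justification, in particular your remark that simplicity of the principal eigenvalue ensures the constant vector really is the principal eigenfunction.
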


(This is analogous to the case of a single equation.) Clearly, by Propositions \ref{PE}(d) and \ref{sc}, if, for example, $\int_\Omega (m-\alpha)<0$ and $m-\alpha$ changes sign, when $d_1$ is small, $\lambda_0>0$. This suggests  we could study the effects of diffusion rates on $\lambda_0$ in a more direct way.

 Consider the eigenvalue problem with $d>0$ and $\mu>0$
\begin{equation}\label{EP4}
\aligned
&L\Phi:=d\mathcal{L}\Phi+\mu M(x)\Phi=\lambda \Phi&&\mbox{in}\,\,\Omega,\, \frac{\partial \Phi}{\partial n}=0  &&\mbox{on}\,\,\partial\Omega,
\endaligned
\end{equation}
where $\mathcal{L}\phi=\left(\begin{array}{cc}
\Delta& 0 \\
0& d_0\Delta
\end{array}\right)\left(\begin{array}{c}
\phi_1 \\
\phi_2
\end{array}\right)$ with $\frac{\partial \phi_i}{\partial n}=0$, $i=1,2,$ $d_0>0$ is given and $$M(x)=\left(\begin{array}{cc}
m(x)-\alpha(x)& \beta(x) \\
\alpha(x) & m(x)-\beta(x)
\end{array}\right).$$ 
  We extend our notation to denote the principal eigenvalue of \eqref{EP4} as $\lambda(d,\mu M )$. Note that if $d_0=1$, problem \eqref{EP4} can be reduced to the classical scalar eigenvalue problem $d\Delta \Phi+\mu m(x)\Phi=\lambda \Phi$, in other words, $\lambda(d,\mu M )=\lambda(d,\mu m)$.  Below we only focus on $d_0>1$.

For each given $x\in\bar\Omega$, let $s (M(x))$ be the spectral bound of $M(x)$, which is the largest real eigenvalue  due to the Perron-Frobenius Theorem.   Since $\lambda_1=m(x)$ and $\lambda_2=m(x)-\alpha(x)-\beta(x)$ are the two real eigenvalues of $M(x)$, it easily follows that $s(M(x))=m(x)$. 


\smallskip

 \begin{proposition}\label{FP}
 	Assume that $m$ changes sign and $\int_{\Omega}m<0$. Then when $d=1$, there exist finitely many values of  $\mu>0$ such that $\lambda(1,\mu M)=0$.
 \end{proposition}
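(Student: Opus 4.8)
The plan is to regard $\lambda(\mu):=\lambda(1,\mu M)$ as a function of the single parameter $\mu>0$ and to show that it is real-analytic, that it is negative for small $\mu$, and that it tends to $+\infty$ as $\mu\to\infty$. Finiteness of $\{\mu>0:\lambda(\mu)=0\}$ then follows at once, since a real-analytic function that is not identically zero has only isolated zeros and hence only finitely many in any compact interval.

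First I would record analyticity. For each $\mu>0$ the operator $\mathcal{L}+\mu M$ in \eqref{EP4} (with $d=1$) is cooperative, because the off-diagonal entries $\alpha,\beta$ of $M$ are nonnegative, and it is irreducible since $\alpha,\beta$ are positive at some $x_0$. Thus by the Krein--Rutman theorem (as for \eqref{EP}, see \cite{Amann,Lam,Gomez}) its principal eigenvalue $\lambda(\mu)$ is algebraically simple and isolated, with a positive eigenfunction $(\phi_1^\mu,\phi_2^\mu)$. Since the family depends linearly, hence analytically, on $\mu$, analytic perturbation theory \cite{Katobook} shows that $\lambda(\mu)$ is real-analytic on $(0,\infty)$, so its zeros there are isolated.

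The mechanism controlling the sign of $\lambda(\mu)$ is the identity obtained by adding the two equations of \eqref{EP4} and integrating over $\Omega$: the Neumann condition annihilates the (constant-coefficient) Laplacian terms, and each column of $M$ sums to $m$, so with $\psi_\mu:=\phi_1^\mu+\phi_2^\mu>0$ one gets
$$\lambda(\mu)\int_\Omega\psi_\mu=\mu\int_\Omega m\,\psi_\mu,\qquad\text{i.e.}\qquad \lambda(\mu)=\mu\,\frac{\int_\Omega m\,\psi_\mu}{\int_\Omega\psi_\mu}.$$
As $\mu\to0^+$, elliptic estimates make the normalized eigenfunctions precompact, and (since $\lambda(\mu)\to0$) any limit lies in the kernel of $\mathcal{L}$, i.e. is a positive constant vector; hence $\psi_\mu$ converges uniformly to a positive constant and the quotient above tends to $|\Omega|^{-1}\int_\Omega m=\bar m<0$, using the hypothesis $\int_\Omega m<0$. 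Consequently $\lambda(\mu)<0$ for all small $\mu>0$, so the zero set avoids some interval $(0,\epsilon]$ and cannot accumulate at $0$.

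For the opposite end I would use that $s(M(x))=m(x)$ attains a positive maximum (since $m$ changes sign): by the large-parameter asymptotics for principal eigenvalues of cooperative systems --- the system analogue of the scalar fact (Lemma 15.4 of \cite{Hess}) already invoked after \eqref{EP1} --- one has $\lambda(\mu)/\mu\to\max_{\bar\Omega}m>0$, so $\lambda(\mu)\to+\infty$ and $\lambda(\mu)>0$ for $\mu\ge\mu_1$. The zeros of $\lambda$ therefore all lie in the compact interval $[\epsilon,\mu_1]\subset(0,\infty)$, on which $\lambda$ is real-analytic and not identically zero, and there can be only finitely many. The step I expect to be the main obstacle is the $\mu\to\infty$ asymptotic: here $d_0>1$ blocks the reduction to a scalar weighted eigenvalue problem that was available when $d_0=1$, and it also blocks the convexity argument used for \eqref{EP1} (where $\mu$ multiplied a scalar weight rather than the full matrix $M$), which is precisely why one can only claim finiteness rather than ``at most one'' crossing; establishing $\lambda(\mu)\to+\infty$ thus requires the cooperative-system concentration estimate rather than either shortcut.
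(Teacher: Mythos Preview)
Your proposal is correct, and the overall architecture --- analyticity of $\mu\mapsto\lambda(1,\mu M)$, negativity for small $\mu$, positivity for large $\mu$, hence finitely many zeros --- matches the paper's. The large-$\mu$ step is the same as the paper's: the paper writes $\lambda(1,\mu M)=\mu\,\lambda(1/\mu,M)$ and invokes the small-diffusion limit $\lim_{d\to 0}\lambda(d,M)=\max_{\bar\Omega}s(M(x))=\max_{\bar\Omega}m>0$ from \cite{Dancer,Lam}, which is exactly the ``cooperative-system concentration estimate'' you appeal to.

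Where you genuinely diverge is the small-$\mu$ step. The paper builds an explicit positive supersolution out of scalar principal eigenfunctions for the shifted weights $m-\alpha+k_0\beta$ and $m-\beta+k_0^{-1}\alpha$ (with $k_0=\int\alpha/\int\beta$), and then uses the Amann/L\'opez-G\'omez maximum-principle characterization to force $\lambda(1,\mu M)<0$. You instead exploit the special structure of $M$ (each column sums to $m$): adding the two equations and integrating gives
\[
\lambda(\mu)\int_\Omega(\phi_1^\mu+\phi_2^\mu)=\mu\int_\Omega m\,(\phi_1^\mu+\phi_2^\mu),
\]
and since any $C^0$-subsequential limit of the normalized eigenpair as $\mu\to0$ is a nonnegative constant vector with positive sum, the quotient on the right tends to $|\Omega|^{-1}\int_\Omega m<0$. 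This is shorter and more conceptual than the paper's supersolution construction; the only delicate point --- harmless here --- is that the limit at $\mu=0$ is not simple (the kernel of $\mathcal L$ is two-dimensional), but your argument only needs the \emph{sum} $\psi_\mu$ to converge to a positive constant, which it does regardless of which direction in that kernel is selected. The paper's approach, by contrast, does not rely on the column-sum coincidence and would adapt more readily to matrices $M$ whose columns do not all sum to the same scalar weight.
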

 \begin{proof}
 	We first claim that when $d=1$, there exists $\mu_0(d_0)>0$, such that $\lambda(1,\mu)>0$ for any $\mu>\mu_0$.
 		Note that $\lim\limits_{d\to0}\lambda(d,M)=\max\limits_{x\in\bar\Omega}s(M(x))=\max\limits_{x\in\bar\Omega}m(x)>0$ (see, e.g., \cite{Dancer,Lam}; related results on singularly perturbed competition systems are obtained in \cite{Rincon}). So there exists a small $\delta_0>0$ such that $\lambda(d,M)>0$ for $d\in(0,\delta_0)$. Now let $\mu_0=\frac{1}{\delta_0}>0$, it follows from $\lambda(1,\mu M)=\mu\lambda(\frac{1}{\mu},M)$ that $\lambda(1,\mu M)>0$ for any $\mu>\mu_0$.

 		Set $k_0=\frac{\int_{\Omega}\alpha}{\int_{\Omega}\beta}>0$. Let $f_1(\mu,\cdot)$ and $f_2(\mu,\cdot)$ be the  eigenfunctions associated with $\lambda(1,\mu(m-\alpha+k_0\beta))$ and $\lambda(d_0,\mu(m-\beta+\frac{1}{k_0}\alpha))$, such that $f_1(0,\cdot)=1$ and $f_2(0,\cdot)=k_0$. 
 		Note that for any $D>0$, we have $\lambda(D,0)=0$.
		As in the derivation of \eqref{EP3} in the proof of Proposition \ref{thresh}, the eigenvalues $\lambda(1,\mu(m-\alpha+k_0\beta))$ and $\lambda(d_0,\mu(m-\beta+\frac{1}{k_0}\alpha))$ are differentiable with respect to $\mu$. Differentiating with respect to $\mu$, integrating over $\Omega$, and letting $\mu \to 0$, we obtain that
		 $$\frac{\partial\lambda(D,\mu (m-\alpha+k_0\beta))}{\partial \mu}(D,0)=\frac{\partial\lambda(D,\mu (m-\beta+\frac{1}{k_0}\alpha))}{\partial \mu}(D,0)=\frac{1}{|\Omega|}\int m: =A<0.$$
		 
 		Our next goal is to show that if $\mu>0$ is sufficiently small, then there exists $\phi(\mu)=(\phi_1,\phi_2)$ such that $L\phi\ll0$.  If such a $\phi$ exists, it  will be a positive super     
		solution  of  $L\phi=0$ with $L$ as in \eqref{EP4},  which then implies that   $\lambda(1,\mu M)<0$.  (This follows from the characterization of the strong maximum principle in Theorem 13 of \cite{Amann}, which 
		gives an extension of results of \cite{Gomez} to systems with
		general boundary conditions.  The key results of \cite{Amann, Gomez} are that for cooperative systems such as \eqref{EP4}, three things are equivalent: the operator $L$ has a strong maximum principle, the principal eigenvalue is negative, and there exists a strictly positive supersolution.)
 		
 		Denote $\beta_{\max}=\max_{x\in\bar\Omega}\beta(x)$ and $\alpha_{\max}=\max_{x\in\bar\Omega}\alpha(x)$. For any sufficiently small $\epsilon>0$ satisfying 
 		$(A+\epsilon)(1-\epsilon)+(k_0+1)\beta_{\max}\epsilon<0$ and $(A+\epsilon)(k_0-\epsilon)+(\frac{1}{k_0}+1)\alpha_{\max}\epsilon<0$,  there exists $\mu_0>0$, such that for any 
		$0<\mu<\mu_0$, we have $$\|f_1(\mu,\cdot)-1\|_{\infty}<\epsilon,\quad \|f_2(\mu,\cdot)-k_0\|_{\infty}<\epsilon$$
 		and $$\left|\frac{\lambda(1,\mu(m-\alpha+k_0\beta))}{\mu}-A\right|<\epsilon,\quad \left|\frac{\lambda(d_0,\mu(m-\beta+\frac{1}{k_0}\beta))}{\mu}-A\right|<\epsilon$$
 		Let $\phi(\mu)=(f_1(\mu),f_2(\mu))$. Then for $0<\mu<\mu_0$,
 		we have 
 		\begin{align*}
 		&\Delta f_1+\mu(m-\alpha+k_0\beta)f_1+\mu (-k_0\beta f_1+\beta f_2)\\
 		&=\mu\left(\frac{\lambda(1,\mu(m-\alpha+k_0\beta))}{\mu}f_1-k_0\beta f_1+\beta f_2\right)\\
 		&<\mu [(A+\epsilon)(1-\epsilon)-k_0\beta(1-\epsilon)+\beta(k_0+\epsilon)]\\
 	&\le\mu[(A+\epsilon)(1-\epsilon)+(k_0+1)\beta_{\max}\epsilon]<0,
 		\end{align*}
 	and	
 	\begin{align*}
 	&d_0\Delta f_2+\mu(m-\beta+\frac{1}{k_0}\alpha)f_2+\mu \left(-\frac{\alpha}{k_0} f_2+\alpha f_1\right)\\
 	&=\mu\left(\frac{\lambda(d_0,\mu(m-\beta+\alpha/k_0))}{\mu}f_2-\frac{\alpha}{k_0} f_2+\alpha f_1\right)\\
 	&<\mu [(A+\epsilon)(k_0-\epsilon)-\frac{\alpha}{k_0}(k_0-\epsilon)+\alpha(1+\epsilon)]\\
 	&\le\mu[(A+\epsilon)(k_0-\epsilon)+(\frac{1}{k_0}+1)\alpha_{\max}\epsilon]<0.
 	\end{align*}
 	So $L\phi\ll0$, and hence, the characterization of the maximum principle in \cite{Amann} implies that $\lambda(1,\mu M)<0$ for any $0<\mu<\mu_0$. 	\\
 	An alternative approach:
 	 Let $S_\mu(t)$ be the solution semigroup for $U_t=\mathcal{L}U+\mu M(x)U$ on $X:=C(\bar\Omega,\R^2)$. For every $\mu>0$, $S_\mu(t)$ is compact and strongly positive for $t>0$, in view of Krein Rutman Theorem (see \cite[Theorem 7.2]{Hess} and \cite[Theorem 7.6.1]{H}), it follows that the spectral radius $r(S_\mu(t))=e^{\lambda(1,\mu M) t}$ for any $t>0$. 
 	Clearly, for any given $0<\mu<\mu_0$, $1\cdot\phi-S_\mu(t)\phi:=h>0$ in $X$ for $t>0$.
 	
 	Now \cite[Theorem 7.3]{Hess} implies $1>r(S_\mu(t))=e^{\lambda(1,\mu M)t}$ for every $t>0$, and hence, $\lambda(1, \mu M)<0$.

 	As a consequence, we see that equation $\lambda(1,\mu M)=0$ admits at least one positive root. Indeed, the roots of $\lambda(1,\mu M)=0$ are isolated due to the fact that $\lambda$ is analytic in $\mu\in(0,\infty)$ (see, e.g., \cite[Theorem 4.1]{Gomez} and \cite{Katobook}). Thus, there are a finite number of values of $\mu\in(0,\mu_0)$ such that $\lambda(1,\mu M)=0$.    (We cannot give conditions that guarantee there is a unique value of $\mu$, as in the scalar case, because the key lemma derived for that purpose in \cite{HK} is not available  for systems.) \end{proof}
\begin{lemma}
Assume that $m$ changes sign. Let $d_0=\frac{d_2}{d_1}$ be fixed and $d=d_1$ vary.
\begin{enumerate}
	\item [(a)] If $\int_\Omega m<0$, then there exists $0<C_1\le C_2$ dependent on $d_0$ and $M$ such that,
	 \begin{enumerate}
	 	\item [(i)] If $d<C_1$, then $\lambda_0>0$.
	 	\item[(ii)] If $d>C_2$, then $\lambda_0<0$.
	 	\item[(iii)] there are a finite number of $d\in[C_1,C_2]$, such that $\lambda_0=0$. 
	 \end{enumerate} 
 \item [(b)] If $\int_\Omega m>0$, then $\lambda_0>0$ provided $d$ is either large or small.
\end{enumerate}
\end{lemma}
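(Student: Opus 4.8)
The plan is to first observe that $\lambda_0$ is exactly the principal eigenvalue $\lambda(d,M)$ of \eqref{EP4} with $\mu=1$, $d=d_1$ and $d_0=d_2/d_1$: with these choices $d\mathcal{L}=\mathrm{diag}(d_1\Delta,d_2\Delta)$ and $\mu M=M$, so \eqref{EP4} becomes precisely \eqref{EP}. Hence the lemma reduces to a statement about how the single scalar function $d\mapsto\lambda(d,M)$ behaves as $d=d_1$ ranges over $(0,\infty)$ with $d_0$ frozen, and the whole argument rests on identifying the two limits $\lim_{d\to0^+}\lambda(d,M)$ and $\lim_{d\to\infty}\lambda(d,M)$ together with continuity and analyticity of $\lambda(\cdot,M)$.

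For the large-diffusion limit I would use the integral identity obtained by adding the two equations of \eqref{EP4} (with $\mu=1$) and integrating over $\Omega$. Writing $\Phi=(\phi_1,\phi_2)$ for the positive principal eigenfunction and using the Neumann conditions to kill the Laplacian terms, the switching contributions cancel because each column of $\bigl(\begin{smallmatrix}-\alpha&\beta\\\alpha&-\beta\end{smallmatrix}\bigr)$ sums to zero, leaving
\[
\lambda(d,M)=\frac{\int_\Omega m\,(\phi_1+\phi_2)}{\int_\Omega(\phi_1+\phi_2)}.
\]
This already gives the uniform bound $\underline m\le\lambda(d,M)\le\bar m$ and, more importantly, the large-$d$ limit: normalizing $\|\Phi\|_\infty=1$ and dividing \eqref{EP4} by $d$ forces $\Delta\phi_1,\Delta\phi_2\to0$, so elliptic estimates show that $\phi_1,\phi_2$ converge uniformly to constants $c_1,c_2$ with $c_1+c_2>0$; substituting into the identity yields $\lim_{d\to\infty}\lambda(d,M)=\frac1{|\Omega|}\int_\Omega m$. (Equivalently one may invoke the scaling relation $\lambda(d,M)=d\,\lambda(1,d^{-1}M)$ from the proof of Proposition \ref{FP}, reducing the limit to $\lim_{\mu\to0^+}\lambda(1,\mu M)/\mu$, which by the degenerate perturbation computation there equals $\frac1{|\Omega|}s\bigparen{\int_\Omega M}=\frac1{|\Omega|}\int_\Omega m$, since the averaged matrix $\int_\Omega M$ has spectral bound $\int_\Omega m$ with positive Perron vector $(\int_\Omega\beta,\int_\Omega\alpha)$.) For the small-diffusion limit I would quote the singular-perturbation fact already used in the proof of Proposition \ref{FP}, namely $\lim_{d\to0^+}\lambda(d,M)=\max_{x\in\bar\Omega}s(M(x))=\bar m$ (see \cite{Dancer,Lam}), which is strictly positive because $m$ changes sign.

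With the two limits in hand the conclusions follow quickly. Since $\bar m>0$, the value $\lambda(d,M)$ is positive for all small $d$, giving part (a)(i) and the small-$d$ half of (b). When $\int_\Omega m>0$ the large-$d$ limit $\int_\Omega m/|\Omega|$ is positive, giving the large-$d$ half of (b); when $\int_\Omega m<0$ it is negative, giving (a)(ii). For (a)(iii), continuity and the opposite signs of the two limits force the zero set $Z=\{d>0:\lambda(d,M)=0\}$ to be nonempty and compact; taking $C_1=\min Z$ and $C_2=\max Z$ yields $0<C_1\le C_2$ (depending on $d_0$ and $M$) with $\lambda(d,M)>0$ for $d<C_1$ and $\lambda(d,M)<0$ for $d>C_2$. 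Finiteness of $Z\subset[C_1,C_2]$ then follows from analyticity of $d\mapsto\lambda(d,M)$ on $(0,\infty)$: the principal eigenvalue of the cooperative, irreducible system $d\mathcal{L}+M$ is simple and isolated (Krein--Rutman, via \cite{Amann,Gomez}) and depends linearly on $d$, so Kato's perturbation theory \cite{Katobook} gives analyticity, and an analytic function that is not identically zero (it is positive on $(0,C_1)$) has only finitely many zeros on the compact interval $[C_1,C_2]$.

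The step I expect to require the most care is the rigorous large-$d$ homogenization: one must establish uniform-in-$d$ bounds on the normalized eigenfunctions and on $\lambda(d,M)$ (the integral identity supplies the latter) and then upgrade subsequential convergence of $\phi_1,\phi_2$ to genuine uniform convergence to constants, so that passing to the limit in the identity is justified and the limit value $\int_\Omega m/|\Omega|$ is unambiguous. Everything else --- the reduction to a scalar function of $d$, the small-$d$ limit, and the counting of zeros --- is either a direct citation or a soft consequence of continuity and analyticity.
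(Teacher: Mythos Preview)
Your argument is correct. The small-$d$ limit (via $\lambda(d,M)\to\bar m$) and the analyticity/zero-counting step are handled exactly as in the paper. The genuine difference is the large-$d$ behavior. The paper simply cites Proposition~\ref{FP}, which for small $\mu=d^{-1}$ constructs an explicit positive supersolution $\phi=(f_1,f_2)$ built from scalar principal eigenfunctions and shows $L\phi\ll0$, hence $\lambda(1,\mu M)<0$ when $\int_\Omega m<0$; for part~(b) the paper reruns the same construction with reversed inequalities to produce a subsolution giving $\lambda(1,\mu M)>0$. You instead compute the precise limit $\lim_{d\to\infty}\lambda(d,M)=|\Omega|^{-1}\int_\Omega m$ from the integral identity $\lambda=\int_\Omega m(\phi_1+\phi_2)\big/\int_\Omega(\phi_1+\phi_2)$ combined with a homogenization argument forcing the normalized eigenfunctions to flatten to positive constants. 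Your route is more informative---it yields the actual limiting value and treats both signs of $\int_\Omega m$ in one stroke---at the price of the compactness step you correctly flag as the delicate part. One caveat: your parenthetical Approach~B attributes a ``degenerate perturbation computation'' to Proposition~\ref{FP}, but the paper never differentiates $\lambda(1,\mu M)$ at $\mu=0$ for the system; it uses the supersolution construction instead. Your averaged-matrix heuristic is correct in spirit, but it is a separate argument, so rely on your Approach~A for rigor.
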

\begin{proof}
	Statement (a) is a direct consequence of Proposition \ref{FP}.
	The proof of Statement (b) is similar to that in Proposition \ref{FP}. Indeed, we can construct $\psi(\mu)\gg0$ such that $L\psi\gg0$ when $\mu$ is sufficiently small. Moreover,
	$(-\psi)-S_\mu(t)(-\psi):=h_1>0$ in $X$. Then
	\cite[Theorem 7.3]{Hess} again implies $1<r(S_\mu(t))=e^{\lambda(1,\mu M)t}$, that is, $\lambda(1,\mu M)>0$ for $\mu$ sufficiently small. 	
\end{proof}
 Because we are unable to show that there is a unique root of $\lambda(1,\mu M)=0$ in Proposition \ref{FP}, a sharper result  for Statement (b) is not available that for arbitrary $d>0$, $\lambda_0>0$.   However when $d_1=d_2$, the result is analogous to a scalar equation, and $\lambda_0$ depends continuously on $d_1$ and $d_2$. Therefore, a perturbation argument implies the following result.
\begin{lemma}
	Assume that $m$ changes sign. Let $d=d_1$ be fixed and $d_0=\frac{d_2}{d_1}$ vary.
	\begin{enumerate}
		\item [(a)]If $\int_\Omega m<0$ and $\mu^*$ is defined in Proposition \ref{PE}, then there exists a small $\delta(d_1)>0$, such that for any $d_0\in(1,1+\delta)$
		\begin{enumerate}
			\item [(i)] If $d_1>\frac{1}{\mu^*}$, then $\lambda_0<0$.
			\item[(ii)] If $d_1<\frac{1}{\mu^*}$, then $\lambda_0>0$.
		\end{enumerate} 
		\item [(b)] If $\int_\Omega m\ge0$,  there exists a small $\delta(d_1)>0$, such that for any $d_0\in(1,1+\delta)$, $\lambda_0>0$. 
	\end{enumerate}
\end{lemma}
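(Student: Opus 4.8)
The plan is to regard $\lambda_0$ as a function of the diffusion ratio $d_0=d_2/d_1$ with $d=d_1$ held fixed, to compute its value at the base point $d_0=1$ in terms of a scalar weighted eigenvalue, and then to transport strict sign information from $d_0=1$ to a one-sided neighborhood $(1,1+\delta)$ by continuity. The three prescribed conclusions in (a)(i), (a)(ii), and (b) are all strict (open) sign conditions, so this perturbation strategy is exactly what is called for.

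First I would establish the reduction at $d_0=1$. When $d_2=d_1=d$ the two equations of \eqref{EP} may be added: setting $\psi=\phi_1+\phi_2$, the coupling terms $-\alpha\phi_1+\beta\phi_2$ and $\alpha\phi_1-\beta\phi_2$ cancel, leaving $\lambda\psi=d\Delta\psi+m\psi$ in $\Omega$ with $\partial\psi/\partial n=0$ on $\partial\Omega$. Since $\psi>0$, it is the positive principal eigenfunction of the scalar weighted problem, so the principal eigenvalue of \eqref{EP} at $d_0=1$ coincides with $\lambda(d_1,m)$. This is precisely the identity $\lambda(d,\mu M)=\lambda(d,\mu m)$ recorded after \eqref{EP4}, specialized to $\mu=1$. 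Thus the value of $\lambda_0$ at $d_0=1$ equals $\lambda(d_1,m)$.

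Next I would read off the sign of $\lambda(d_1,m)$ from Proposition \ref{PE}(d), using the standing assumption that $m$ changes sign. In case (b), $\int_\Omega m\ge0$ gives $\lambda(d_1,m)>0$ for every $d_1$ by Proposition \ref{PE}(d)(i). In case (a), $\int_\Omega m<0$ produces the threshold $\mu^*>0$ of Proposition \ref{PE}(d)(ii), for which $\mathrm{sign}(1-d_1\mu^*)=\mathrm{sign}\,\lambda(d_1,m)$; hence $d_1>1/\mu^*$ forces $\lambda(d_1,m)<0$, while $d_1<1/\mu^*$ forces $\lambda(d_1,m)>0$. In all three cases the inequality is strict, precisely because the degenerate value $d_1=1/\mu^*$ is excluded.

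Finally I would invoke continuity of $\lambda_0$ in $d_0$. The problem \eqref{EP} is a cooperative system with a compact, strongly positive solution semigroup, so by the Krein--Rutman theorem together with the analytic perturbation theory already used for $\lambda(\mu)$ earlier in this section, $\lambda_0$ depends continuously---indeed smoothly---on the diffusion rates, hence on $d_0$. A strict inequality for $\lambda_0$ at $d_0=1$ (either $\lambda_0<0$ or $\lambda_0>0$) therefore persists on some interval $(1-\delta,1+\delta)$, and in particular on $(1,1+\delta)$, with $\delta=\delta(d_1)$; applying this in each case yields (a)(i), (a)(ii), and (b). The one place demanding care is the continuous dependence on $d_0$: this is not covered by Proposition \ref{PE}, which is stated for the scalar problem, so I would either cite the system version of the Krein--Rutman and analytic-perturbation machinery or simply rely on the continuity of $\lambda_0$ in $d_1$ and $d_2$ asserted immediately before the statement.
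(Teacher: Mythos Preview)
Your proposal is correct and follows essentially the same approach as the paper: reduce to the scalar problem at $d_0=1$ (where $\lambda_0=\lambda(d_1,m)$), read off the strict sign of $\lambda(d_1,m)$ from Proposition~\ref{PE}(d), and then perturb using continuity of $\lambda_0$ in $d_0$. Your write-up is in fact more detailed than the paper's, which compresses the argument to a few lines and handles only (a)(i) explicitly, dispatching the remaining cases with ``Similarly''; your explicit justification of the reduction at $d_0=1$ and your remark that continuity in $d_0$ requires the system (not scalar) Krein--Rutman/perturbation theory are both welcome clarifications.
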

\begin{proof}
	(a) (i) When $d_0=1$, there exists $\mu^*>0$ such that $\lambda_0=\lambda(d_1,\mu m)<0$ if and only if $d_1>\frac{1}{\mu^*}$. Now for any given $d_1>\frac{1}{\mu^*}$ and $d_0=1$, we have $\lambda_0<0$.  Since $\lambda_0$ depends continuously on $d_0>0$,  there exists some $\delta(d_1)>0$, such that $\lambda_0<0$ for any $d_0\in(1,1+\delta)$. Similarly, we can verify other cases.
\end{proof}
Now we have the following practical persistence result in terms of $\lambda_0$. Let $X_1=C(\bar \Omega,\R^2)$ and $X^+_1=C(\bar \Omega,\R^2_+)$.
\begin{theorem}\label{UP}
	Let $u(t,x,\phi)$ be the solution of \eqref{PDEModel} with $u(0,\cdot,\phi)=\phi\in X^+_1$.
	\begin{enumerate}
		\item [(i)] If $\lambda_0\leq 0$, then $0$ is globally attractive for any $\phi\in X^+_1$.
		\item [(ii)] If $\lambda_0>0$, then system \eqref{PDEModel} admits at least one positive steady state $(U^*,V^*)$, and there exists an $\eta>0$ such that for any $\phi\in X^+_{1}\setminus\{0\}$, we have
	$$\liminf_{t\to\infty}u_i(t,x,\phi)\ge\eta, \quad\forall i=1,2.$$
	\end{enumerate}
\end{theorem}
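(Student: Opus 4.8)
The plan is to hang the whole argument on a single scalar functional obtained by testing \eqref{PDEModel} against the positive principal eigenfunction of the \emph{adjoint} of the linearization at $0$. Since the zeroth-order matrix of \eqref{EP} has nonnegative off-diagonal entries $\alpha,\beta$ that are positive at $x_0$, the formal $L^2$-adjoint of the operator in \eqref{EP} (transpose that matrix, keep $d_1\Delta,d_2\Delta$ with Neumann conditions) is again cooperative and irreducible, so by the Krein-Rutman theorem (cf. \cite{Amann,Hess}) it has the same principal eigenvalue $\lambda_0$ with a strictly positive eigenfunction $\psi^*=(\psi_1^*,\psi_2^*)\gg0$. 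Setting
$$L(t)=\int_\Omega\bigl(\psi_1^*\,u+\psi_2^*\,v\bigr)\,dx,$$
a computation using Green's formula and $\mathcal A^*\psi^*=\lambda_0\psi^*$ gives
$$\frac{dL}{dt}=\lambda_0 L-\int_\Omega\Bigl(\psi_1^*\,(u^2+buv)+\psi_2^*\,(cuv+v^2)\Bigr)\,dx .$$
The virtue of this identity is that it is insensitive to the fact that \eqref{PDEModel} is only cooperative near the origin and competitive at high densities, so it bypasses the failure of the comparison principle for the full, non-quasimonotone system. Throughout I use that, by Proposition \ref{Contracting}, every orbit with nonnegative bounded data is eventually confined to $[0,B_1]\times[0,B_2]$, so parabolic smoothing makes its closure compact, and that by the strong maximum principle any nontrivial orbit satisfies $u,v>0$ on $\bar\Omega$ for $t>0$.

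For statement (i) I would note that the quadratic integrand factors as $\psi_1^*u(u+bv)+\psi_2^*v(cu+v)$, which is nonnegative on the positive cone and vanishes only when $u\equiv v\equiv0$. Hence when $\lambda_0\le0$ we have $dL/dt\le0$, with equality along an orbit only at the zero state, so $L$ is a strict Lyapunov functional. Precompactness of orbits then lets me invoke the LaSalle invariance principle: the $\omega$-limit set of any $\phi\in X^+_1$ lies in the largest invariant subset of $\{dL/dt=0\}=\{0\}$, whence $(u,v)(t)\to0$. This treats the delicate critical case $\lambda_0=0$ on exactly the same footing as $\lambda_0<0$, which is the main reason I would choose this functional over a comparison argument.

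For statement (ii), with $\lambda_0>0$, the same functional yields weak persistence directly. If some nontrivial orbit had $\limsup_{t\to\infty}\|(u,v)(t)\|_\infty<\rho$, then for large $t$ the bounds $u^2+buv\le\rho(1+b)u$ and $cuv+v^2\le\rho(1+c)v$ give $\int_\Omega(\cdots)\le C\rho\,L$ with $C=\max\{1+b,1+c\}$, so $dL/dt\ge(\lambda_0-C\rho)L$; choosing $\rho<\lambda_0/C$ forces $L$ to grow exponentially, contradicting its boundedness. Thus there is $\eta_0>0$, independent of $\phi\ne0$, with $\limsup_{t\to\infty}\|(u,v)(t)\|_\infty\ge\eta_0$, so $\{0\}$ is a uniform weak repeller. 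Because the coupling terms $\alpha u$ and $\beta v$ push every nontrivial orbit into the open positive cone immediately, the only invariant set on the boundary of $X^+_1$ is $\{0\}$, and it is isolated and acyclic. I would then feed point dissipativity (Proposition \ref{Contracting}), the resulting global attractor, and the weak repeller property into the standard abstract uniform persistence theory (of Hale-Waltman / Thieme type) to upgrade weak persistence to uniform persistence, i.e.\ $\liminf_{t\to\infty}\|(u,v)(t)\|\ge\eta$; a Harnack estimate on the (relatively compact, strictly positive) attractor then promotes this to the componentwise bound $\liminf_{t\to\infty}u_i\ge\eta$. Finally, a uniformly persistent dissipative semiflow admits a coexistence equilibrium in its persistence attractor, which furnishes the positive steady state $(U^*,V^*)$.

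I would flag two obstacles. The conceptual one is the absence of global quasimonotonicity, which rules out the usual sub/supersolution comparison; the adjoint-eigenfunction functional $L$ is precisely the device that sidesteps it, and verifying that the adjoint problem is irreducible enough for Krein-Rutman (so that $\psi^*\gg0$) is exactly where the hypothesis that $\alpha,\beta$ are positive somewhere is essential. The technical one is the last step of (ii): checking the isolatedness and acyclicity hypotheses of the persistence theorem, extracting the positive equilibrium from the persistence attractor, and converting the norm bound into the componentwise bound via boundary regularity and the Harnack inequality.
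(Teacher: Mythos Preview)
Your proposal is correct and shares with the paper its central device: the functional $L(t)=\int_\Omega(\psi_1^*u+\psi_2^*v)$ built from the positive adjoint principal eigenfunction, which the paper introduces precisely to handle the critical case $\lambda_0=0$ in part (i). The differences are in how widely each argument deploys this tool. The paper treats $\lambda_0<0$ by direct comparison with $pe^{\lambda_0 t}\psi$ (the nonlinearity only helps), and in part (ii) proves the weak-repeller property via a perturbed eigenvalue problem with $m$ replaced by $m-2\epsilon$, again by comparison; only for $\lambda_0=0$ does it compute $dL/dt$, and there it closes with the explicit estimate $dL/dt\le -c_0|\Omega|^{-1}L^2$ plus a compactness contradiction rather than LaSalle. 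You instead route all three cases through $L$: LaSalle for (i), and for (ii) the differential inequality $dL/dt\ge(\lambda_0-C\rho)L$ when the orbit is confined to a small $\rho$-ball. Your route is more unified and avoids the auxiliary perturbed eigenvalue problem; the paper's route for $\lambda_0<0$ and for (ii) is slightly more elementary (pure comparison, no invariance principle). For the componentwise lower bound in (ii), the paper applies the Smith--Zhao persistence theorem directly with the generalized distance $p(\phi)=\min_i\min_x\phi_i$, which yields $\liminf u_i\ge\eta$ in one stroke; your plan to pass through a norm bound and then invoke Harnack works, but note that once you have a compact persistence attractor $A_0$ lying in $\mathrm{Int}\,X_1^+$ (invariance plus the strong maximum principle force this), compactness alone already gives the uniform componentwise bound, so Harnack is not really needed.
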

\begin{proof}
	(i) It is easy to see that for any $t>0$ 
	\begin{eqnarray}\label{PDEineq}
	&&\frac{\partial u_1}{\partial t}\le d_1\Delta u_1-\alpha(x) u_1+\beta (x) u_2+m(x)u_1,\nonumber\\
	&&\frac{\partial u_2}{\partial t}\le d_2 \Delta u_2+\alpha(x) u_1-\beta(x) u_2+m(x)u_2.\nonumber
	\end{eqnarray}
	Therefore, for any $\phi\in X^+_1$, there exists a number $p>0$, such that $\phi\le p\psi$ where $\psi$ is the positive eigenfunction associated with $\lambda_0$, and hence, the comparison principle (for the linearized system of \eqref{PDEModel}) implies $u(t,\cdot,\phi)\le p e^{\lambda_0 t} \psi$ for any $t\ge0$.  If $\lambda_0<0$, let $t\to \infty$. Then the statement (i) follows for that case. \\
Suppose that $\lambda_0=0$. It follows from the Krein-Rutman theorem that the adjoint of the operator on the right side of \eqref{EP} has a principal eigenvalue equal to $\lambda_0=0$ with a positive eigenfunction. Let $\psi^*=(\psi^*_1,\psi^*_2)$ be a positive eigenfunction for the adjoint problem for  \eqref{EP}.  If we multiply the first equation in \eqref{PDEModel} by $\psi^*_1$ and the second by $\psi^*_2$ and then integrate over $\Omega$ and add the resulting equations, all the terms arising from the linear part of the right side of \eqref{PDEModel} drop out and we obtain

\begin{equation}\label{int1}
 \frac{d }{dt}\int_{\Omega}(\psi^*_1u+\psi^*_2v)=-\int_{\Omega}[\psi^*_1u(u+bv)+\psi^*_2v(cu+v)].
 \end{equation}
 Since $\psi^*_1$ and $\psi^*_2$ are positive and continuous on $\bar\Omega$, they are bounded above and below by positive constants so that 
 $[\psi^*_1u(u+bv)+\psi^*_2v(cu+v)] \geq c_0(\psi^*_1u+\psi^*_2v)^2$ for some positive constant $c_0$.  It then follows from \eqref{int1} and the Cauchy-Schwartz inequality  that 
 
$$\frac{d }{dt}\int_{\Omega}(\psi^*_1u+\psi^*_2v) \leq -c_0\int_{\Omega}(\psi^*_1u+\psi^*_2v)^2 \leq -\frac{c_0}{|\Omega|} \left[\int_{\Omega}(\psi^*_1u+\psi^*_2v)\right]^2$$
so that 
\begin{equation}\label{int2}
\int_{\Omega}(\psi^*_1u+\psi^*_2v) \to 0 \quad\mbox{as}\quad t \to \infty.
 \end{equation}
If $(0,0)$ is not globally attractive, then for some solution $(u,v)$ of \eqref{PDEModel} there must exist a constant
 $\epsilon>0$ and a sequence $\{t_n\}$ with $t_n \to \infty$ as $n\to \infty$ such that $||(u(t_n),v(t_n)||_{X_1}>\epsilon$. All solutions of \eqref{PDEModel} in $X_1^+$ are bounded  by Proposition \ref{Contracting}. Standard results on parabolic regularity and Sobolev embedding then imply that forward orbits are precompact in $X_1$, so there must be a subsequence of 
 $\{(u(t_n),v(t_n))\}$ that converges in  $X_1$.  By re-indexing we can denote this subsequence as  $(u(t_n),v(t_n))$, then $(u(t_n),v(t_n)) \to (u^*,v^*)$ for some $(u^*,v^*)$ as $n \to \infty$, with
 $|| (u(t_n),v(t_n))||_{X_1} \geq \epsilon$  so that $(u^*,v^*) \neq (0,0)$.  For all sufficiently large $n$ we must have $u(t_n) \geq u^*/2$  and $v(t_n) \geq v^*/2$ so that 
 $$\int_{\Omega}(\psi^*_1u(t_n)+\psi^*_2v(t_n)) \geq \frac{1}{2} \int_{\Omega}(\psi^*_1u*+\psi^*_2v*) >0.$$ SInce $t_n \to \infty$ as $n\to \infty$, this contradicts \eqref{int2}.  To avoid a  contradiction we must have $(0,0)$ globally attractive.

	(ii) Since $\lambda_0>0$, there exists small $\epsilon>0$ such that  the perturbed eigenvalue problem
\begin{equation}\label{PEP}
\aligned
&\lambda \phi_1=d_1\Delta \phi_1-\alpha(x)\phi_1+\beta(x)\phi_2+(m(x)-2\epsilon)\phi_1&&\mbox{in}\,\, \Omega,\\
&\lambda\phi_2=d_2 \Delta \phi_2+\alpha(x) \phi_1-\beta(x)\phi_2+(m(x)-2\epsilon)\phi_2&&\mbox{in}\,\, \Omega,\\
& \frac{\partial \phi_1}{\partial n}=\frac{\partial \phi_2}{\partial n}=0  &&\mbox{on}\,\, \partial\Omega 
\endaligned
\end{equation}
	admits a positive principal eigenvalue $\lambda^\epsilon_0$ with a positive eigenfunction $\psi^\epsilon$.
	
	Let $\mathbb W:=\{\phi\in X^+_1:\phi\not\equiv0\}$ and $\partial \mathbb W:=\{\phi\in X^+_1:\phi\equiv0\}$. Note that for any $\phi\in \mathbb W$, we have the solution $u(t,\cdot,\phi)\gg0$ for any $t>0$. We now prove the zero is a uniform weak repeller for $\mathbb W$ in the sense that there exists $\delta_0>0$ such that $\limsup_{t\to\infty}\|u(t,\cdot,\phi)\|_{X_1}\ge \delta_0$ for all $\phi\in \mathbb W$.
	Suppose, by contradiction, that $\limsup_{t\to\infty}\|u(t,\cdot,\phi_0)\|_{X_1}< \epsilon$ for some $\phi_0\in \mathbb W$. Then there exists $t_1>0$ such that $u_1(t,\cdot,\phi_0)<\epsilon$ and $u_2(t,\cdot,\phi_0)<\epsilon$ for any $t\ge t_1$ satisfying
	\begin{eqnarray}\label{PDEModeleq}
	&&\frac{\partial u}{\partial t}\ge d_1\Delta u-\alpha(x) u+\beta (x) v+u(m(x)-2\epsilon),\nonumber\\
	&&\frac{\partial v}{\partial t}\ge d_2 \Delta v+\alpha(x) u-\beta(x) v+v(m(x)-2\epsilon),\nonumber
	\end{eqnarray}
	Since $u(t_1,\cdot,\phi_0)$ is positive, there exists $a>0$ such that $u(t_1,\cdot,\phi_0)\ge a\phi^\epsilon$. Then the comparison principle implies that $u(t,\cdot,\phi_0)\ge ae^{\lambda^{\epsilon}(t-t_1)}\psi^\epsilon$ for any $t\ge t_1$. It then follows that $u(t,\cdot,\phi_0)$ is unbounded, which is impossible. 
	
	The above argument shows that $W^s(\{0\})\cap \mathbb W=\emptyset$ and $\{0\}$ is isolated in $X^+_1$, where $W^s(\{0\})$ is the stable set of $\{0\}$.  Define $p(\phi)=\min\limits_{1\le i\le2}\{\min\limits_{x\in\bar\Omega}\phi_i(x)\}$. It is easy to see that $p$ is a generalized distance function for the semiflow: $Q_t:X^+_1\to X^+_1$. 
	The dissipativity and precompactness of forward orbits for \eqref{PDEModel} imply that the the semi-dynamical system $Q_t(\phi):=u(t,\cdot,\phi)$  admits a compact global attractor on $\mathbb W$, and hence, it contains an equilibrium $(U^*,V^*)\in \mathbb W$. Moreover,  it 
	follows from \cite[Theorem 3]{SZ} that there exists an $\eta>0$ such that $\min\{p(\psi): \psi\in \omega(\phi)\}>\eta $ for any $\phi\in \mathbb W$. Therefore, 
	for any $\phi\in \mathbb W$, we have
	$$\liminf_{t\to\infty}u_i(t,x,\phi)\ge\eta, \quad\forall i=1,2.$$
\end{proof}
\section{System with small switching rates and positive $m(x)$} 
Throughout this section, we assume conditions in Proposition \ref{Cases}(1) hold and $bc\le1$. Roughly speaking, as long as positive $\beta$ and $\alpha$ are very small, the requirements in Proposition \ref{Cases}(1) would be valid. We consider the submodel
\begin{equation}\label{2PDEModel}
\aligned
&\frac{\partial u}{\partial t}=d_1\Delta u-\alpha u+\beta v+u(m(x)-u-bv)&&\mbox{in}\,(0,\infty)\times \Omega,\\
&\frac{\partial v}{\partial t}=d_2 \Delta v+\alpha u-\beta v+v(m(x)-cu-v)&&\mbox{in}\,(0,\infty)\times \Omega,\\
& \frac{\partial u}{\partial n}=\frac{\partial v}{\partial n}=0&&\mbox{on}\,(0,\infty)\times \partial\Omega,\\
&u(0,x)=\phi_1(x),\, v(0,x)=\phi_2(x)&&\mbox{in}\,\, \Omega. 
\endaligned
\end{equation}
 When $\alpha=\beta\equiv0$, this is the model studied in \cite{HN}.
 Since $(0,0)$ is unstable due to the fact $m>0$ on $\bar\Omega$ ,  the existence of the positive steady state follows immediately from Theorem \ref{UP}. 
By Proposition \ref{Cases}, we can show that every solution with positive initial data will eventually lie in the region $\displaystyle\left(\frac{\bar\beta}{b},\bar{m}\right)\times\left(\frac{\bar \alpha}{c},\bar{m}\right)$, where the system will be a competitive system.  Thus we can apply ideas based on the theory of positive  operators and monotone
semi-dynamical systems with respect to the competitive ordering.  

\begin{proposition}\label{AS}
	 If a positive steady state $(U,V)$ of \eqref{2PDEModel} exists, it must be asymptotically stable.
\end{proposition}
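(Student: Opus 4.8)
The plan is to exploit the competitive, order‑preserving structure guaranteed by Proposition~\ref{Cases}(1) and to reduce the stability question to the sign of a single principal eigenvalue, which the hypothesis $bc\le 1$ should force to be negative. First I would restrict attention to the invariant, attracting rectangle $R=(\bar\beta/b,\bar m)\times(\bar\alpha/c,\bar m)$ from Proposition~\ref{Cases}(1). On $R$ the reaction is competitive, since $\partial_v(\beta v-buv)=\beta-bu<0$ and $\partial_u(\alpha u-cuv)=\alpha-cv<0$ there; hence the semiflow of \eqref{2PDEModel} is strongly order preserving for the competitive order $(u_1,v_1)\le_K(u_2,v_2)\iff u_1\le u_2,\ v_1\ge v_2$, and by the strong maximum principle it is strongly monotone on the interior. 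For such a semiflow the stability of $(U,V)$ is governed by the linearization, so it suffices to control the principal eigenvalue of the linearized operator. Linearizing at $(U,V)$ and making the change of variable $\psi\mapsto-\psi$ in the second component (which is a similarity, hence spectrum preserving) turns the competitive linearization, because $\beta-bU<0$ and $\alpha-cV<0$ on $R$, into the cooperative system
\begin{equation*}
\aligned
&\lambda\phi=d_1\Delta\phi+(m-\alpha-2U-bV)\phi+(bU-\beta)\tilde\psi,\\
&\lambda\tilde\psi=d_2\Delta\tilde\psi+(cV-\alpha)\phi+(m-\beta-2V-cU)\tilde\psi,
\endaligned
\end{equation*}
with Neumann conditions. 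By Krein--Rutman this operator $\tilde{\mathcal J}$ has a real principal eigenvalue $\lambda_1$ with eigenfunction $(\phi_1,\tilde\psi_1)\gg0$, and $(U,V)$ is asymptotically stable exactly when $\lambda_1<0$.

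To locate $\lambda_1$ I would first use the equilibrium itself. Writing the steady‑state equations as $d_1\Delta U+(m-\alpha-U-bV)U+\beta V=0$ and $d_2\Delta V+(m-\beta-V-cU)V+\alpha U=0$, we see that $[\,d_1\Delta+(m-\alpha-U-bV)\,]U=-\beta V\le0,\ \not\equiv0$, so $U\gg0$ is a strict positive supersolution and $\lambda(d_1,m-\alpha-U-bV)<0$. Since $m-\alpha-2U-bV<m-\alpha-U-bV$, Proposition~\ref{PE}(b) then gives $\sigma_1:=\lambda(d_1,m-\alpha-2U-bV)<0$, and likewise $\sigma_2:=\lambda(d_2,m-\beta-2V-cU)<0$. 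Thus the two diagonal logistic operators of $\tilde{\mathcal J}$ are already stable; this is the PDE analogue of ``negative trace'' in the Lotka--Volterra coexistence matrix.

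The remaining point is that the cooperative coupling $(bU-\beta,\,cV-\alpha)$ must not push $\lambda_1$ back up to $0$, and here $bc\le1$ should play the role of the ODE determinant $UV(1-bc)\ge0$. Pairing the two eigenfunction equations for $\tilde{\mathcal J}$ with $(U,V)$, integrating by parts (Neumann boundary terms vanish), and subtracting the steady‑state equations gives an identity of the form
$$\lambda_1\!\int_\Omega(U\phi_1+V\tilde\psi_1)=-\!\int_\Omega U^2\phi_1-\!\int_\Omega V^2\tilde\psi_1+\!\int_\Omega U\tilde\psi_1(bU-\alpha-\beta)+\!\int_\Omega V\phi_1(cV-\alpha-\beta),$$
so that, after discarding the strictly negative switching contributions, $\operatorname{sign}\lambda_1$ is controlled by whether $b\int U^2\tilde\psi_1+c\int V^2\phi_1\le\int U^2\phi_1+\int V^2\tilde\psi_1$. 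This is precisely the inequality that $bc\le1$ is meant to supply.

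I expect this last estimate to be the main obstacle. One cannot simply test $\tilde{\mathcal J}$ against a constant multiple $(\theta_1U,\theta_2V)$: because $U/V$ is non‑constant, the competitive coupling need not be dominated pointwise even when $bc\le1$, and the two eigenfunction components $\phi_1,\tilde\psi_1$ enter the crucial integrals with different weights. The clean sufficient condition is the \emph{a priori ratio estimate} $c\le\tilde\psi_1/\phi_1\le 1/b$ on $\bar\Omega$ — an interval that is nonempty exactly when $bc\le1$ — which would make the inequality above hold termwise and hence yield $\lambda_1<0$; establishing this ratio control (via a maximum‑principle argument on the quotient $\tilde\psi_1/\phi_1$) is the technical heart of the proof. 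As a fallback I would run the semiflow from the two $\le_K$‑extreme corners of $R$ to produce extremal equilibria $E_*\le_K E^*$ bracketing every positive steady state and try to prove $E_*=E^*$, but with spatially varying coefficients this uniqueness is itself delicate, so I would treat the eigenvalue argument as primary.
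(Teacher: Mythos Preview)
Your reduction to the sign of the Krein--Rutman principal eigenvalue $\lambda_1$ is correct and matches the paper's strategy, but the proof is left incomplete at the decisive step: you admit that the inequality $b\int U^2\tilde\psi_1+c\int V^2\phi_1\le\int U^2\phi_1+\int V^2\tilde\psi_1$ is ``the main obstacle'' and propose to derive it from a pointwise ratio bound $c\le\tilde\psi_1/\phi_1\le 1/b$, yet you give no argument for that bound. A maximum-principle argument on the quotient $\rho=\tilde\psi_1/\phi_1$ does not go through cleanly here, because the equation for $\rho$ involves a drift $2d_1\nabla\log\phi_1$ and a zero-order term that mixes both the $x$-dependent coefficients and the eigenfunction itself; there is no evident barrier forcing $\rho$ into $[c,1/b]$. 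So as written the proposal has a genuine gap.

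The paper closes this gap by a different device, adapted from He--Ni. Instead of testing the linearized equations linearly against $(U,V)$, one first derives (using the steady-state equations) the identity
\[
-\lambda_1\Phi_1^*U=-d_1\nabla\!\cdot\!\Bigl(U^2\nabla\tfrac{\Phi_1^*}{U}\Bigr)+U^2(\Phi_1^*+b\Phi_2^*)+\beta(V\Phi_1^*-U\Phi_2^*),
\]
and its companion for $(\Phi_2^*,V)$, then multiplies by the \emph{cubic} weights $\Phi_1^{*2}/U^2$ and $\Phi_2^{*2}/V^2$ and integrates. The divergence terms become the nonnegative quantities $2d_i\int U\Phi_1^*|\nabla(\Phi_1^*/U)|^2$, etc. Taking $c^3$ times the first integrated identity minus the second, the reaction terms combine---thanks to $bc\le1$ and $\Phi_2^*<0$---into $\int(c\Phi_1^*+\Phi_2^*)^2(c\Phi_1^*-\Phi_2^*)\ge0$, and the switching terms contribute $\int(V\Phi_1^*-U\Phi_2^*)\bigl(c^3\beta\Phi_1^{*2}/U^2+\alpha\Phi_2^{*2}/V^2\bigr)>0$. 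Since the left-hand side is $-\lambda_1$ times a strictly positive quantity, $\lambda_1<0$ follows. The point is that the cubic weighting, not a ratio bound on the eigenfunction, is what makes $bc\le1$ usable.
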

\begin{proof}
The essential idea is motivated by \cite{HN}.	Linearizing the steady state problem of \eqref{2PDEModel} at $(U,V)$,we have
	\begin{equation}
	\aligned
	&\lambda \Phi_1=d_1\Delta \Phi_1+(m(x)-2U-bV-\alpha)\Phi_1+(\beta-bU)\Phi_2,\quad &&\mbox{in}\,\,\Omega,\\
	&\lambda \Phi_2=d_2 \Delta \Phi_2+(\alpha-cV)\Phi_1+(m(x)-cU-2V-\beta)\Phi_2,\quad &&\mbox{in}\,\,\Omega,\\
	& \frac{\partial \Phi_1}{\partial n}=\frac{\partial \Phi_2}{\partial n}=0,\quad &&\mbox{on}\,\,\partial\Omega. 
	\endaligned
	\end{equation}
	By the Krein-Rutman theorem and the fact that $U(\cdot)>\frac{\bar\beta}{b}$ and $V(\cdot)>\frac{\bar\alpha}{c}$, the  eigenvalue problem admits a principal eigenvalue $\lambda_1$, with the corresponding eigenfunction satisfying $\Phi^*_1>0>\Phi^*_2$ on $\bar \Omega$.   
	By a straightforward calculation, using the equations satisfied
	by $U$ and $\Phi_1^*$ (multiply $U$-equation by $\Phi^*_1$ and $\Phi^*_1$-equation by $U$, and then do the subtraction) and the identity $(\Delta U)\Phi^*_1-U(\Delta\Phi^*_1)=\triangledown\cdot[(\triangledown U) \Phi^*_1-(\triangledown \Phi^*_1) U]$,  we obtain that
	\begin{equation}\label{u0}
	-\lambda_1\Phi^*_1 U=-d_1\triangledown\cdot\left(U^2\triangledown\frac{\Phi^*_1}{U}\right)  +U^2(\Phi^*_1+b\Phi^*_2)+\beta(V\Phi^*_1-U\Phi^*_2).
	\end{equation}
	Similarly, we have
	\begin{eqnarray*}
		&&-\lambda_1\Phi^*_2 V=-d_2\triangledown\cdot\left(V^2\triangledown\frac{\Phi^*_2}{V}\right)+V^2(c\Phi^*_1+\Phi^*_2)+\alpha(U\Phi^*_2-V\Phi^*_1).
	\end{eqnarray*}
	Multiplying both sides of \eqref{u0} by $\displaystyle\frac{\Phi^{*2}_1}{U^2}$ and integrating over $\Omega$, we see that
	\begin{equation}\label{phi1}
	-\lambda_1\int_{\Omega}\frac{\Phi_1^{*3}}{U}=2d_1\int_{\Omega}U\Phi^*_1\left|\triangledown\frac{\Phi^*_1}{U}\right|^2+\int_{\Omega}\Phi^{*2}_1(\Phi^*_1+b\Phi^*_2)+\int_{\Omega}\beta(V\Phi^*_1-U\Phi^*_2)\displaystyle\frac{\Phi^{*2}_1}{U^2}.
	\end{equation}
	Likewise, we get
	\begin{equation}\label{phi23}
	-\lambda_1\int_{\Omega}\frac{\Phi_2^{*3}}{V}=2d_2\int_{\Omega}V\Phi^*_2\left|\triangledown\frac{\Phi^*_2}{V}\right|^2+\int_{\Omega}\Phi^{*2}_2(c\Phi^*_1+\Phi^*_2)+\int_{\Omega}\alpha(U\Phi^*_2-V\Phi^*_1)\displaystyle\frac{\Phi^{*2}_2}{V^2}.
	\end{equation}
	Subtract \eqref{phi23} from the product of $c^3$ and \eqref{phi1}. Then together with the fact that $bc\le1$ and $\Phi^*_2<0$, we obtain 
	\begin{eqnarray}\label{phi}
	&&-\lambda_1\left(c^3\int_{\Omega}\frac{\Phi_1^{*3}}{U}-\int_{\Omega}\frac{\Phi_2^{*3}}{V}\right)\ge 2 c^3d_1\int_{\Omega}U\Phi^*_1\left|\triangledown\frac{\Phi^*_1}{U}\right|^2-2 d_2\int_{\Omega}V\Phi^*_2\left|\triangledown\frac{\Phi^*_2}{V}\right|^2\nonumber\\
	&&+\int_{\Omega}(c\Phi^{*}_1+\Phi^{*}_2)^2(c\Phi^*_1-\Phi^*_2)+\int_{\Omega}(V\Phi^*_1-U\Phi^*_2)\left(c^3\beta\displaystyle\frac{\Phi^{*2}_1}{U^2}+\alpha\displaystyle\frac{\Phi^{*2}_2}{V^2}\right).
	\end{eqnarray}
	It follows immediately from $\Phi^*_2<0$ and $V\Phi^*_1-U\Phi^*_2>0$ that the right hand side of \eqref{phi} is greater than zero, and hence, $\lambda_1<0$. Now it follows  immediately from \cite[Theorem 7.6.2]{H} that linearly stable ($\lambda_1<0$) implies asymptotically stable. 
\end{proof}
The following result is a direct consequence of Theorem \ref{UP}, Proposition \ref{AS} and monotone dynamical systems approach (see, e.g., \cite[Theorem 9.2]{Hess}).
\begin{theorem}
 If the conditions of Proposition \ref{Cases}(1) are satisfied and $bc \le 1$, then system \eqref{2PDEModel} admits a unique positive steady state $(U^*,V^*)$, which is globally asymptotically stable in $X^+_1\setminus\{0\}$.
	\end{theorem}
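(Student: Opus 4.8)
The plan is to combine three ingredients already in hand: the existence and persistence from Theorem~\ref{UP}, the local asymptotic stability of \emph{every} positive steady state from Proposition~\ref{AS}, and the monotone structure of the flow on the eventual invariant region coming from Proposition~\ref{Cases}(1). First I would note that since $m>0$ on $\bar\Omega$ we have $\lambda_0>0$, so Theorem~\ref{UP}(ii) already supplies at least one positive steady state together with uniform persistence: every $\phi\in X^+_1\setminus\{0\}$ satisfies $\liminf_{t\to\infty}u_i(t,\cdot,\phi)\ge\eta>0$. After the instantaneous parabolic smoothing to strict positivity, Proposition~\ref{Cases}(1) shows that the orbit of any such $\phi$ eventually enters the rectangle $R:=[\bar\beta/b,\bar m]\times[\bar\alpha/c,\bar m]$. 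On the interior of $R$ one has $\partial g_1/\partial v=\beta-bu<0$ and $\partial g_2/\partial u=\alpha-cv<0$, so \eqref{2PDEModel} is competitive there; reversing the sign of the second component turns it into an irreducible cooperative system, and the strong maximum principle then makes the time-$t$ map strongly order preserving on $R$ with respect to the competitive order $(u_1,v_1)\le_K(u_2,v_2)\iff u_1\le u_2,\ v_1\ge v_2$.

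Next I would use that the two $\le_K$-extreme corners of $R$ are strict sub- and super-solutions. The sign computations already performed in the proof of Proposition~\ref{Cases}(1) show that at the $\le_K$-minimal corner $A:=(\bar\beta/b,\bar m)$ the $u$-component increases while the $v$-component decreases, and at the $\le_K$-maximal corner $B:=(\bar m,\bar\alpha/c)$ the reverse holds; in the competitive order this says precisely that the flow moves up from $A$ and down from $B$, with $A\le_K B$. Standard monotone dynamics (\cite[Theorem~9.2]{Hess} and \cite[Ch.~7]{H}) then produces a minimal and a maximal positive equilibrium $\underline U\le_K\overline U$ as the monotone limits of the orbits starting at $A$ and $B$, and the squeezing $\Phi_t(A)\le_K\Phi_t(\phi)\le_K\Phi_t(B)$ forces $\omega(\phi)\subseteq[\underline U,\overline U]_K$ for every orbit that enters $R$.

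The decisive step is to show $\underline U=\overline U$. I would argue by contradiction: if $\underline U<_K\overline U$, the Dancer--Hess connecting-orbit lemma for strongly monotone maps yields an entire orbit, monotone in $\le_K$, joining two order-related equilibria in $[\underline U,\overline U]_K$, and the endpoint that this orbit leaves is necessarily unstable. This contradicts Proposition~\ref{AS}, which guarantees that under $bc\le1$ and the hypotheses of Proposition~\ref{Cases}(1) every positive steady state of \eqref{2PDEModel}, in particular each equilibrium in $[\underline U,\overline U]_K$, is asymptotically stable. Hence $\underline U=\overline U=:(U^*,V^*)$ is the unique positive steady state, and the squeezing above collapses to $\omega(\phi)=\{(U^*,V^*)\}$ for every $\phi\in X^+_1\setminus\{0\}$, which is exactly global asymptotic stability.

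I expect the main obstacle to be the transition from \emph{eventual} competitiveness and invariance on $R$ to a bona fide strongly monotone semiflow to which \cite[Theorem~9.2]{Hess} applies, and the careful use of the connecting-orbit dichotomy so that ``every positive equilibrium is stable'' actually yields uniqueness rather than merely a stable totally ordered family. Concretely, I must confirm strong (not just weak) monotonicity of the restricted semiflow, verify the strict sub/super-solution property at $A$ and $B$, and rule out positive equilibria sitting off the chain $[\underline U,\overline U]_K$; each of these follows from the strong maximum principle together with Proposition~\ref{AS}, but they are the points that require genuine care.
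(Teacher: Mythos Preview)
Your proposal is correct and follows exactly the route the paper takes: the paper's own proof is a one-sentence invocation of Theorem~\ref{UP}, Proposition~\ref{AS}, and the monotone dynamical systems result \cite[Theorem~9.2]{Hess}, and you have simply unpacked how those three pieces fit together (eventual competitiveness on $R$, sub/super corners, minimal/maximal equilibria, and uniqueness from the fact that every positive steady state is asymptotically stable). Your cautions about strong monotonicity of the restricted semiflow and about equilibria outside the chain are well placed but resolve routinely, since $R=[A,B]_K$ is positively invariant and attracts every nontrivial orbit.
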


\section{Cooperative-cooperative-competition system}
In this section, we consider one species having two different kinds of movements that competes with another ecologically identical species having only one movement mode. 
Now consider the  system 
\begin{equation}\label{PDEModel2}
\aligned
&\frac{\partial u}{\partial t}=d_1\Delta u-\alpha u+\beta v+u(m(x)-u-v-w)&&\mbox{in}\,(0,\infty)\times \Omega,\\
&\frac{\partial v}{\partial t}=d_2 \Delta v+\alpha u-\beta v+v(m(x)-u-v-w)&&\mbox{in}\,(0,\infty)\times \Omega,\\
&\frac{\partial w}{\partial t}=d_3 \Delta w+w(m(x)-u-v-w)&&\mbox{in}\,(0,\infty)\times \Omega,\\
& \frac{\partial u}{\partial n}=\frac{\partial v}{\partial n}=\frac{\partial w}{\partial n}=0&&\mbox{on}\,(0,\infty)\times \partial\Omega,\\
&u(0,x)=\phi_1(x),\, v(0,x)=\phi_2(x), \, w(0,x)=\phi_3(x)&&\mbox{in}\,\, \Omega.
\endaligned
\end{equation}
Here $d_1<d_2$, $d_3$, $\alpha$ and $\beta$ are positive numbers. Throughout this section, we impose the following assumption.

\noindent (H) $m$ is non-constant, $\int_{\Omega}m\ge0$ and  $0<\max\limits_{\bar\Omega}{m(x)}<\alpha+\beta$.

By (H) and Proposition \ref{Cases},  one can show that the subsystem 
\begin{equation}\label{subPDEModel}
\aligned
&\frac{\partial u}{\partial t}=d_1\Delta u-\alpha u+\beta v+u(m(x)-u-v)&&\mbox{in}\,(0,\infty)\times \Omega,\\
&\frac{\partial v}{\partial t}=d_2 \Delta v+\alpha u-\beta v+v(m(x)-u-v)&&\mbox{in}\,(0,\infty)\times \Omega,\\
& \frac{\partial u}{\partial n}=\frac{\partial v}{\partial n}=0&&\mbox{on}\,(0,\infty)\times \partial\Omega,\\
&u(0,x)=\phi_1(x),\, v(0,x)=\phi_2(x)&&\mbox{in}\,\, \Omega. 
\endaligned
\end{equation}
is cooperative, irreducible and sub-homogeneous in a contracting rectangular region $[0,\beta]\times[0,\alpha]$ which attracts  all positive trajectories.  The approach of  monotone dynamical systems, along with  Proposition \ref{ST}, implies system \eqref{subPDEModel} admits a globally attractively positive steady state $(u^*,v^*)$.   (See \cite{CCY} for related results in the constant coefficient case.)

Because the dynamics of the first two components move the system \eqref{subPDEModel} into a region where they satisfy a cooperative system, we can treat the model \eqref{PDEModel2} as a monotone system with respect to the ordering $(u_1,v_1,w_1) \ge (u_2,v_2,w_2) \iff u_1 \ge u_2, v_1 \ge v_2, w_1\le w_2$.  Systems of ordinary differential equations with this type of order structure are treated in \cite{Smith1}; see also the discussion of alternate cones in \cite{Smith2}.  The ideas extend directly to reaction-diffusion systems via the maximum principle.

Also, the classic  result on logistic-type reaction-diffusion equations shows that 
\begin{equation}\label{subPDEModel2}
\aligned
&\frac{\partial w}{\partial t}=d_3 \Delta w+w(m(x)-w)&&\mbox{in}\,(0,\infty)\times \Omega,\\
&\frac{\partial w}{\partial n}=0&& \mbox{on}\,(0,\infty)\times \partial \Omega,\\
&w(0,x)=\phi(x) && \mbox{in}\, \Omega
\endaligned
\end{equation}
admits a globally attractively positive steady state $w^*(\cdot)$.  

The following observation is based on the strong maximum principle.
\begin{proposition}\label{SS}
Assume that (H) holds. Then the nontrivial nonnegative steady states of system \eqref{PDEModel2} are  $(u^*,v^*,0)$, $(0,0,w^*)$ plus any positive steady states that exist.
\end{proposition}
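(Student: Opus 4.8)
The plan is to classify all nonnegative solutions $(u,v,w)$ of the elliptic (steady-state) version of \eqref{PDEModel2},
\[
\begin{aligned}
&d_1\Delta u-\alpha u+\beta v+u(m-u-v-w)=0,\\
&d_2\Delta v+\alpha u-\beta v+v(m-u-v-w)=0,\\
&d_3\Delta w+w(m-u-v-w)=0,
\end{aligned}
\]
together with the Neumann boundary conditions, by establishing two independent dichotomies---one for the $w$-component and one for the pair $(u,v)$---and then matching the surviving cases against the uniqueness statements already recorded for the subsystems \eqref{subPDEModel} and \eqref{subPDEModel2}.

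First I would treat the third equation as a scalar linear elliptic equation $d_3\Delta w+h(x)w=0$ with bounded coefficient $h:=m-u-v-w\in L^\infty(\Omega)$. Since $h$ may change sign, I would shift by a constant $K\ge\|h\|_\infty$ and rewrite the equation as $(-d_3\Delta+(K-h))\,w=Kw\ge0$ with $K-h\ge0$; the operator on the left has nonnegative zeroth-order coefficient and hence satisfies the strong maximum principle. Because $w\ge0$, it follows that either $w\equiv0$ or $w>0$ on $\bar\Omega$ (using Hopf's lemma at $\partial\Omega$ for the Neumann condition). This is the first dichotomy.

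Next I would analyze $(u,v)$. The linear coupling forces $u\equiv0\iff v\equiv0$: if $v\equiv0$ the second equation reduces to $\alpha u\equiv0$, so $u\equiv0$ since $\alpha>0$, and symmetrically the first equation gives $\beta v\equiv0$ when $u\equiv0$. If instead $(u,v)\not\equiv(0,0)$, so both components are nontrivial, I would rewrite the first equation as $d_1\Delta u+(m-u-v-w-\alpha)u=-\beta v\le0$ and apply the same shifted strong maximum principle to the nonnegative supersolution $u$; this yields $u>0$ on $\bar\Omega$, and symmetrically $v>0$. Thus the pair $(u,v)$ is either identically zero or strictly positive in both components---the second dichotomy.

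Finally I would combine the two dichotomies into four cases. The case $w\equiv0$, $(u,v)\equiv(0,0)$ is the trivial equilibrium and is excluded. When $w\equiv0$ and $u,v>0$, the first two equations are exactly the steady-state problem for \eqref{subPDEModel}, whose positive steady state is unique (being globally attracting, as recorded above), so $(u,v)=(u^*,v^*)$ and the equilibrium is $(u^*,v^*,0)$. When $u\equiv v\equiv0$ and $w>0$, the third equation becomes the logistic problem \eqref{subPDEModel2}, whose unique positive steady state is $w^*$, giving $(0,0,w^*)$. The remaining case $u,v,w>0$ is a positive steady state. The only genuinely delicate step is the maximum-principle dichotomy itself: the sign-indefiniteness of the zeroth-order coefficient $m-u-v-w$ must be absorbed by the constant shift $K$, and the coupling in the $(u,v)$ system is handled by viewing $-\beta v$ and $-\alpha u$ as nonpositive sources so that each component is a bona fide supersolution; the remainder is bookkeeping together with the previously established uniqueness of $(u^*,v^*)$ and $w^*$.
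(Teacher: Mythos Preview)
Your argument is correct and matches the paper's intended approach: the paper does not give an explicit proof but simply records that the proposition ``is based on the strong maximum principle,'' and your two dichotomies (for $w$ via the scalar equation, and for $(u,v)$ via the coupling $\alpha,\beta>0$ together with the shifted maximum principle) are exactly the details this remark is pointing to. The appeal to the global attractivity of $(u^*,v^*)$ and $w^*$ for uniqueness of the semi-trivial states is also what the paper has in mind, since both facts are stated just before the proposition.
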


Next, we investigate the effects of diffusion rate $d_3$ on the local stability of $(u^*,v^*,0)$ and $(0,0,w^*)$; that is, we fix the other parameters and let $d_3$ vary. 
\begin{lemma}\label{uv}
	There exists  $d_c\in(d_1,\frac{\beta}{\alpha+\beta}d_1+\frac{\alpha}{\alpha+\beta}d_2)$, such that  $(u^*,v^*,0)$ is linearly unstable when $d_3<d_c$ and $(u^*,v^*,0)$ is linearly stable when $d_3>d_c$.
\end{lemma}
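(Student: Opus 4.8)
The plan is to reduce the stability question to a single scalar principal eigenvalue and then to locate its sign change in $d_3$. Linearizing \eqref{PDEModel2} at $(u^*,v^*,0)$, the $w$-row of the Jacobian is $\bigl(0,0,\,d_3\Delta+(m-u^*-v^*)\bigr)$, since every term of the $w$-equation carries a factor $w$; hence the linearization is block triangular, with upper block the linearization of the subsystem \eqref{subPDEModel} at $(u^*,v^*)$ and lower block the scalar Neumann operator $d_3\Delta+(m-u^*-v^*)$. Because $(u^*,v^*)$ is globally asymptotically stable for \eqref{subPDEModel}, the upper block contributes only negative spectrum, so I would argue that $(u^*,v^*,0)$ is linearly stable exactly when $\Lambda(d_3):=\lambda\bigl(d_3,\,m-u^*-v^*\bigr)<0$ and unstable when $\Lambda(d_3)>0$. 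Writing $e:=m-u^*-v^*$ and adding the two steady-state equations, then integrating against $1$ and using the Neumann conditions, yields $\int_\Omega(u^*+v^*)\,e=0$; since $u^*+v^*>0$, this shows $e$ changes sign unless $e\equiv0$, and $e\equiv0$ will be excluded a posteriori by the lower-endpoint estimate below (which gives $\Lambda(d_1)>0$, whereas the principal eigenvalue of $d\Delta$ is $0$). Thus $e$ is non-constant, so by Proposition \ref{PE}(a),(c) the function $\Lambda$ is continuous and strictly decreasing in $d_3$. It therefore suffices to prove $\Lambda(d_1)>0$ and $\Lambda(\bar d)<0$, where $\bar d:=\tfrac{\beta}{\alpha+\beta}d_1+\tfrac{\alpha}{\alpha+\beta}d_2$; the intermediate value theorem and strict monotonicity then produce a unique $d_c\in(d_1,\bar d)$ with $\Lambda>0$ below and $\Lambda<0$ above, which is exactly the assertion.

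For the lower endpoint I would use the variational characterization $\Lambda(d_1)=\sup_{\phi\neq0}\bigl(\int_\Omega(e\phi^2-d_1|\nabla\phi|^2)\bigr)/\int_\Omega\phi^2$ together with the two-dimensional trial space spanned by $u^*$ and $v^*$. Integrating by parts and eliminating the Laplacians through the steady-state equations, the two diagonal entries of the resulting quadratic form simplify to $\int_\Omega(e(u^*)^2-d_1|\nabla u^*|^2)=\alpha P-\beta Q$ and $\int_\Omega(e(v^*)^2-d_1|\nabla v^*|^2)=(d_2-d_1)\int_\Omega|\nabla v^*|^2+\beta R-\alpha Q$, where $P=\int_\Omega(u^*)^2$, $Q=\int_\Omega u^*v^*$, $R=\int_\Omega(v^*)^2$. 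It is then enough to show that one diagonal entry is positive: if $\alpha Q\le\beta R$ the second is positive because $d_2>d_1$ and $v^*$ is non-constant, while if $\alpha Q>\beta R$ then the Cauchy--Schwarz inequality $Q^2\le PR$ gives $\alpha>\beta R/Q\ge\beta Q/P$, hence $\alpha P-\beta Q>0$. In either case $\Lambda(d_1)\ge\max$ of these quotients $>0$.

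The upper endpoint is the crux, and the hard part will be to exhibit a positive strict supersolution of $\bar d\,\Delta\phi+e\phi$; once one is found, the maximum-principle characterization of \cite{Amann,Gomez} already invoked for Proposition \ref{FP} (existence of a positive strict supersolution is equivalent to negativity of the principal eigenvalue) immediately gives $\Lambda(\bar d)<0$. My candidate is the weighted geometric mean $\phi=(u^*)^a(v^*)^b$ with $a=\tfrac{\beta d_1}{\beta d_1+\alpha d_2}$ and $b=\tfrac{\alpha d_2}{\beta d_1+\alpha d_2}$, chosen so that $a+b=1$ and $\tfrac{a}{d_1}+\tfrac{b}{d_2}=\tfrac1{\bar d}$. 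Writing $\bar d\,\Delta\phi+e\phi=\phi\bigl(\bar d\,|\nabla\log\phi|^2+\bar d\,\Delta\log\phi+e\bigr)$ and substituting $\Delta u^*/u^*=\tfrac1{d_1}(\alpha-\beta v^*/u^*-e)$ and $\Delta v^*/v^*=\tfrac1{d_2}(-\alpha u^*/v^*+\beta-e)$, the normalization $\tfrac{a}{d_1}+\tfrac{b}{d_2}=\tfrac1{\bar d}$ makes the $e$-terms cancel exactly. What remains is $\bar d\bigl(|\nabla\log\phi|^2-a|\nabla\log u^*|^2-b|\nabla\log v^*|^2\bigr)$, which is $\le0$ by convexity of $|\cdot|^2$ since $a+b=1$, plus the reaction remainder $\tfrac1{\alpha+\beta}\bigl(2\alpha\beta-\beta^2 v^*/u^*-\alpha^2 u^*/v^*\bigr)$, which is $\le0$ by the arithmetic--geometric mean inequality. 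Hence $\bar d\,\Delta\phi+e\phi\le0$, and not identically zero because $m$ is non-constant, so $\phi$ is a strict supersolution and $\Lambda(\bar d)<0$. Combining the two endpoint estimates with the monotonicity and continuity of $\Lambda$ then delivers the desired $d_c\in\bigl(d_1,\,\tfrac{\beta}{\alpha+\beta}d_1+\tfrac{\alpha}{\alpha+\beta}d_2\bigr)$.
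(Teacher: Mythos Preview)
Your reduction to the scalar eigenvalue $\Lambda(d_3)=\lambda(d_3,m-u^*-v^*)$ and the strategy of pinning down the sign of $\Lambda$ at the two endpoints is exactly the paper's plan. There is, however, a small logical gap at the lower endpoint. Your exclusion of $e\equiv0$ is circular: you defer it to ``$\Lambda(d_1)>0$'', but your proof of $\Lambda(d_1)>0$ invokes ``$v^*$ is non-constant'', which you never establish and which in fact can fail in the degenerate scenario you are trying to rule out. The paper handles this cleanly and independently: if $e\equiv K$ then integration forces $K=0$, whence $(u^*,v^*)$ is a positive eigenfunction of the system $\bigl(\begin{smallmatrix} d_1\Delta-\alpha & \beta\\ \alpha & d_2\Delta-\beta\end{smallmatrix}\bigr)$ at eigenvalue $0$; simplicity then gives $(u^*,v^*)\in\mathrm{span}\{(\beta,\alpha)\}$, so $m$ is constant, contradicting (H). With $e$ non-constant in hand, the paper's lower-endpoint argument proceeds by contradiction (assume $\Lambda(d_1)\le0$, feed $u^*$ and $v^*$ into the Rayleigh quotients for $d_1$ and $d_2$, and contradict the identity $\alpha(\alpha P-\beta Q)+\beta(\beta R-\alpha Q)=\int(\alpha u^*-\beta v^*)^2\ge0$); this sidesteps any need to know that $v^*$ individually is non-constant. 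Your direct dichotomy can be repaired (in the borderline case $\alpha Q=\beta R$ with $v^*$ constant one checks $\alpha P-\beta Q=\alpha\int(u^*-\bar u^*)^2>0$), but as written it is incomplete.

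Your upper-endpoint argument, on the other hand, is genuinely different from the paper's and rather elegant. The paper works with the self-adjoint \emph{system} operator obtained by weighting the two linearized equations by $\alpha$ and $\beta$, uses that its principal eigenvalue is $0$ with eigenfunction $(u^*,v^*)$, and tests the variational formula with $(\phi^*/\alpha,\phi^*/\beta)$ where $\phi^*$ is the scalar principal eigenfunction at $\bar d$; the key inequality comes from noting $(\phi^*/\alpha,\phi^*/\beta)$ cannot itself be the system eigenfunction. You instead stay entirely in the scalar world and build an explicit supersolution $\phi=(u^*)^a(v^*)^b$ with $a/d_1+b/d_2=1/\bar d$, so that the $e$-terms cancel, the gradient terms are handled by convexity of $|\cdot|^2$, and the zeroth-order remainder $\tfrac{1}{\alpha+\beta}\bigl(2\alpha\beta-\beta^2 v^*/u^*-\alpha^2 u^*/v^*\bigr)$ is nonpositive by AM--GM. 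Strictness follows because equality everywhere would force $\alpha u^*\equiv\beta v^*$, hence $\lambda(d_1,e)=\lambda(d_2,e)=0$, impossible since $e$ is non-constant. This is a clean pointwise argument that avoids the auxiliary system eigenvalue problem altogether; the paper's route, by contrast, makes the symmetry in $\alpha,\beta$ manifest and reuses machinery already set up for Lemma~\ref{w}.
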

\begin{proof}
	To investigate the local stability of $(u^*,v^*,0)$, we consider the eigenvalue problem  
	\begin{equation}\label{EP8}
	\aligned
	&\lambda \phi_1=d_1\Delta \phi_1+(m(x)-2u^*-v^*-\alpha)\phi_1+(\beta-u^*)\phi_2-u^*\phi_3&&\mbox{in}\,\,\Omega,\\
	&\lambda \phi_2=d_2 \Delta \phi_2+(\alpha-v^*)\phi_1+(m(x)-u^*-2v^*-\beta)\phi_2-v^*\phi_3&&\mbox{in}\,\,\Omega,\\
	&\lambda\phi_3=d_3 \Delta \phi_3+(m(x)-u^*-v^*)\phi_3&&\mbox{in}\,\,\Omega,\\
	& \frac{\partial \phi_1}{\partial n}=\frac{\partial \phi_2}{\partial n}=\frac{\partial \phi_3}{\partial n}=0&&\mbox{on}\,\,\partial\Omega.
	\endaligned
	\end{equation}  
In view of Proposition \ref{Cases}(2), we have  $u^*<\beta$ and $v^*<\alpha$,  so this eigenvalue problem admits a principal eigenvalue which exactly is $\lambda(d_3, m-u^*-v^*)$ defined in Proposition \ref{PE}. Since $m$ is non-constant, it follows that $(u^*,v^*)$ is a non-constant steady state (that is, $u^*$ and $v^*$ are not both constant). Moreover, $m-u^*-v^*$ is also non-constant.
	 Otherwise, suppose $m-u^*-v^*\equiv K$. Then adding  the equations for the equilibria of \eqref{subPDEModel} together and integrating, we get $K\int_{\Omega}[u^*+v^*]=0$, and hence, $K=0$. It follows that
	 0 is the principal eigenvalue of \begin{equation*}
	 \aligned
	 &\lambda \phi_1=d_1\Delta \phi_1-\alpha \phi_1+\beta \phi_2&&\mbox{in}\,\,\Omega,\\
	 &\lambda \phi_1=d_2 \Delta \phi_2+\alpha \phi_1-\beta\phi_2&&\mbox{in}\,\,\Omega,\\
	 &\frac{\partial \phi_1}{\partial n}=\frac{\partial \phi_2}{\partial n}=0&&\mbox{on}\,\,\partial\Omega
	 \endaligned
	 \end{equation*}
	  associated with the positive eigenfunction $(u^*,v^*)$. Note that $(\beta,\alpha)$ is another positive eigenfunction associated with the principal eigenvalue $0$, and hence, $(u^*,v^*)^T\in Span\{(\beta,\alpha)^T\}$, which is impossible.

	 Observe that $(u^*,v^*,0)$ is independent of $d_3$. Then $\lambda(d_3,m-u^*-v^*)$ is continuous and strictly decreasing in $d_3$, that is, it changes sign at most once.  
	
\noindent Claim 1: $\lambda(d_3,m-u^*-v^*)>0$ when $d_3=d_1$.

	Suppose by contradiction, $\lambda(d_1,m-u^*-v^*)\le0$. Note that the non-constant steady state $(u^*,v^*)$ satisfies
	\begin{equation}\label{PDE4}
	\aligned
	&0=d_1\Delta u^*-\alpha u^*+\beta v^*+u^*(m(x)-u^*-v^*)&&\mbox{in}\,\,\Omega,\\
	&0=d_2 \Delta v^*+\alpha u^*-\beta v^*+v^*(m(x)-u^*-v^*)&&\mbox{in}\,\,\Omega,\\
	& \frac{\partial u^*}{\partial n}=\frac{\partial v^*}{\partial n}=0&&\mbox{on}\,\,\partial\Omega.
	\endaligned
	\end{equation}
	Multiplying the first and second equations by $\alpha u^*$ and $\beta v^*$, respectively, and then integrating over $\Omega$ and adding together,  we see that
	\begin{eqnarray}\label{Ineq2}
	&&\alpha\left[-d_1\int_\Omega|\triangledown u^*|^2+\int_{\Omega}(m-u^*-v^*)u^{*2}\right]\nonumber\\
	&&+\beta\left[-d_2\int_\Omega|\triangledown v^*|^2+\int_{\Omega}(m-u^*-v^*)v^{*2}\right]\\
	&&=\int_{\Omega}(\alpha u^*-\beta v^*)^2\ge0.\nonumber
	\end{eqnarray}
	Since $d_1<d_2$, $\lambda(d_2,m^*-u^*-v^*)<\lambda(d_1,m^*-u^*-v^*)\le0$,  and it follows from the variational formula for the principal eigenvalue $\lambda(d,m)$ that  $-d_1\int_\Omega|\triangledown u^*|^2+\int_{\Omega}(m-u^*-v^*)u^{*2}\le0$ and $-d_2\int_\Omega|\triangledown v^*|^2+\int_{\Omega}(m-u^*-v^*)v^{*2}<0$, in contradiction to \eqref{Ineq2}.
	
	Claim 2: $\lambda(d_3,m^*-u^*-v^*)<0$ when $d_3=\frac{\beta}{\alpha+\beta}d_1+\frac{\alpha}{\alpha+\beta}d_2$.
	
	By way of contradiction, assume that $\lambda(d^0_3,m-u^*-v^*)\ge0$ where $d^0_3=\frac{\beta}{\alpha+\beta}d_1+\frac{\alpha}{\alpha+\beta}d_2$. Let $\phi^*$ be the positive eigenfunction associated with $\lambda(d^0_3,m-u^*-v^*)$; clearly, it is non-constant.
	
	Let $$L\left(\begin{array}{c}
	\phi_1\\\phi_2
	\end{array}\right)=\left(\begin{array}{c}
	\alpha d_1\Delta \phi_1+[(m-u^*-v^*)\alpha-\alpha^2]\phi_1+ \alpha\beta \phi_2\\
	\beta d_2\Delta \phi_2+\alpha\beta \phi_1+[(m-u^*-v^*)\beta-\beta^2]\phi_2
	\end{array}\right).$$ Then $L$ is a self-adjoint operator. The principal eigenvalue of $L$ is $0$ with $(u^*,v^*)$ being the associated eigenfunction,  and we have the variational formula for the principal eigenvalue of $L$  
	\begin{eqnarray*}
		0=\lambda(L)=\sup_{(\phi_1,\phi_2)\in H^1(\Omega,\R^2)\setminus\{0\}} \left\{\frac{\alpha\left[-d_1\int_\Omega|\triangledown \phi_1|^2+\int_{\Omega}(m-u^*-v^*)\phi^2_1\right]}{\int_\Omega{(\phi^2_1+\phi_2^2)}}\right.
		\\
		\left. +\frac{\beta\left[-d_2\int_\Omega|\triangledown \phi_2|^2+\int_{\Omega}(m-u^*-v^*)\phi_2^2\right]}{\int_\Omega{(\phi^2_1+\phi_2^2)}}-\frac{\int_{\Omega}(\alpha \phi_1-\beta \phi_2)^2}{\int_\Omega{(\phi^2_1+\phi_2^2)}}
		\right\}.
	\end{eqnarray*}
Choose test functions $\phi_1=\frac{\phi^*}{\alpha}$ and $\phi_2=\frac{\phi^*}{\beta}$. It then follows that \begin{eqnarray}\label{InEq2}
	&&\frac{-d_1\int_\Omega|\triangledown \phi^*|^2+\int_{\Omega}(m-u^*-v^*)\phi^{*2}}{\alpha}\nonumber\\
	&&+\frac{-d_2\int_\Omega|\triangledown \phi^*|^2+\int_{\Omega}(m-u^*-v^*)\phi^{*2}}{\beta}<0.
	\end{eqnarray}
	The above strict inequality is due to the fact that $\alpha u^*-\beta v^*$ is not identically to zero and hence  $(\phi_1,\phi_2)\not=(u^*,v^*)$. (If $\alpha u^*-\beta v^*\equiv 0$, it then follows that $\lambda(d_1,m-u^*-v^*)=\lambda(d_2,m-u^*-v^*)=0$ with $d_1<d_2$, a contradiction.)
	
	Since we assumed $-d^0_3\int_\Omega|\triangledown \phi^*|^2+\int_{\Omega}(m-u^*-v^*)\phi^{*2}\ge0$, we have  \begin{eqnarray}\label{InEq3}
	&&\left(\frac{d^0_3-d_1}{\alpha}+\frac{d^0_3-d_2}{\beta}\right)\int_{\Omega}|\triangledown \phi^*|^2\le\frac{-d_1\int_\Omega|\triangledown \phi^*|^2+\int_{\Omega}(m-u^*-v^*)\phi^{*2}}{\alpha}\nonumber\\
	&&+\frac{-d_2\int_\Omega|\triangledown \phi^*|^2+\int_{\Omega}(m-u^*-v^*)\phi^{*2}}{\beta}<0.
	\end{eqnarray}
	This implies $\displaystyle d^0_3< \frac{\beta}{\alpha+\beta}d_1+\frac{\alpha}{\alpha+\beta}d_2$, a contradiction.
	
	From the above discussion, we see that given $d_1,d_2,\alpha,\beta$, there exists a unique $d_{c}\in(d_1,\frac{\beta}{\alpha+\beta}d_1+\frac{\alpha}{\alpha+\beta}d_2)$, such that $\lambda(d_c, m-u^*-v^*)=0$. When $d_3<d_{c}$, $(u^*,v^*,0)$ is linearly unstable, while $d_3>d_{c}$, $(u^*,v^*,0)$ is linearly stable.
\end{proof}

Likewise, we check the local stability of $(0,0,w^*)$. The associated eigenvalue problem is
\begin{equation}\label{EP6}
\aligned
&\lambda \phi_1=d_1\Delta \phi_1+(m(x)-w^*-\alpha)\phi_1+\beta\phi_2&&\mbox{in}\,\,\Omega,\\
&\lambda \phi_2=d_2 \Delta \phi_2+\alpha\phi_1+(m(x)-\beta-w^*)\phi_2&&\mbox{in}\,\,\Omega,\\
&\lambda\phi_3=d_3 \Delta \phi_3-w^*\phi_1-w^*\phi_2+(m(x)-2w^*)\phi_3&&\mbox{in}\,\,\Omega,\\
& \frac{\partial \phi_1}{\partial n}=\frac{\partial \phi_2}{\partial n}=\frac{\partial \phi_3}{\partial n}=0&&\mbox{on}\,\,\partial\Omega.
\endaligned
\end{equation}  
The principal eigenvalue $\lambda_2$ of \eqref{EP6} is determined by the sub-eigenvalue problem 
\begin{equation}\label{EP7}
\aligned
&\lambda \phi_1=d_1\Delta \phi_1+(m(x)-w^*-\alpha)\phi_1+\beta\phi_2&&\mbox{in}\,\,\Omega,\\
&\lambda \phi_2=d_2 \Delta \phi_2+\alpha\phi_1+(m(x)-\beta-w^*)\phi_2&&\mbox{in}\,\,\Omega,\\
& \frac{\partial \phi_1}{\partial n}=\frac{\partial \phi_2}{\partial n}=0&&\mbox{on}\,\,\partial\Omega.
\endaligned
\end{equation}  
This eigenvalue problem is equivalent to the weighted eigenvalue problem
\begin{equation}\label{EP7A}
\aligned
&\lambda \alpha \phi_1=d_1\alpha \Delta \phi_1+(m(x)-w^*-\alpha)\alpha\phi_1+\alpha \beta\phi_2&&\mbox{in}\,\,\Omega,\\
&\lambda \beta \phi_2=d_2 \beta \Delta \phi_2+\alpha\beta \phi_1+(m(x)-\beta-w^*)\beta \phi_2&&\mbox{in}\,\,\Omega,\\
& \frac{\partial \phi_1}{\partial n}=\frac{\partial \phi_2}{\partial n}=0&&\mbox{on}\,\,\partial\Omega.
\endaligned
\end{equation}  
The eigenvalue problem \eqref{EP7A} is self-adjoint, so it admits a variational characterization, from which it is easy to see that $\lambda_2$ depends continuously on $w^*$. General properties of solutions to diffusive logistic equations imply that $w^*$ depends smoothly on $d_3>0$  (See for example \cite{CCbook}). This implies $\lambda_2$ depends continuously on $d_3>0$. Now we have the following result.

\begin{lemma}\label{w}
	Assume that (H) holds. If $d_3\le d_1$, then $\lambda_2(d_3)<0$, that is, $(0,0,w^*)$ is linearly stable. If $d_3\ge\frac{\beta}{\alpha+\beta}d_1+\frac{\alpha}{\alpha+\beta}d_2 $, then $\lambda_2(d_3)>0$, that is,  $(0,0,w^*)$ is linearly unstable. Moreover, there exists $d_0\in(d_1,\frac{\beta}{\alpha+\beta}d_1+\frac{\alpha}{\alpha+\beta}d_2)$, such that $\lambda_2(d_0)=0$
\end{lemma}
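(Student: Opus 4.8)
The plan is to work throughout with the principal eigenvalue $\lambda_2$ of the two–component problem \eqref{EP7}, which by the reduction recorded above is the eigenvalue governing the stability of $(0,0,w^*)$. The key structural remark is that \eqref{EP7} is the switching problem \eqref{EP} with growth rate $m$ replaced by the effective rate $m-w^*$, and that its self–adjoint weighted form \eqref{EP7A} carries a variational characterization. Testing \eqref{EP7A} against $(\phi_1,\phi_2)$, integrating by parts, and completing the square in the zeroth–order terms, I would record that $\lambda_2$ has the sign of $\sup_\phi Q(\phi)$, where
$$Q(\phi_1,\phi_2)=-d_1\alpha\int_\Omega|\nabla\phi_1|^2-d_2\beta\int_\Omega|\nabla\phi_2|^2+\int_\Omega(m-w^*)(\alpha\phi_1^2+\beta\phi_2^2)-\int_\Omega(\alpha\phi_1-\beta\phi_2)^2.$$
The single lever is that $w^*$ solves $d_3\Delta w^*+(m-w^*)w^*=0$, so $w^*>0$ is the simple principal eigenfunction of $d_3\Delta+(m-w^*)$ with eigenvalue $0$; equivalently $\lambda(d_3,m-w^*)=0$. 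The variational formula for this scalar eigenvalue then gives
$$\int_\Omega(m-w^*)\phi^2\le d_3\int_\Omega|\nabla\phi|^2\qquad\text{for all }\phi\in H^1(\Omega),$$
with equality only when $\phi$ is a scalar multiple of $w^*$; note $w^*$ is non–constant because $m$ is non–constant by (H).

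For the assertion that $d_3\le d_1$ forces $\lambda_2<0$, I would bound $Q$ from above. Applying the displayed inequality to $\phi_1$ and $\phi_2$ and using $d_3\le d_1<d_2$ yields
$$Q\le\alpha(d_3-d_1)\int_\Omega|\nabla\phi_1|^2+\beta(d_3-d_2)\int_\Omega|\nabla\phi_2|^2-\int_\Omega(\alpha\phi_1-\beta\phi_2)^2\le0,$$
so $\lambda_2\le0$. To exclude equality, suppose $\lambda_2=0$; then the positive principal eigenfunction $(\psi_1,\psi_2)$ must annihilate every nonpositive term, whence $\alpha\psi_1\equiv\beta\psi_2$ and, since $d_3-d_2<0$, $\psi_2$ and therefore $\psi_1$ are constant. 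But equality in the scalar inequality then forces $\psi_1\propto w^*$, contradicting that $w^*$ is non–constant. Thus $\lambda_2<0$.

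For the assertion that $d_3\ge d^0:=\frac{\beta}{\alpha+\beta}d_1+\frac{\alpha}{\alpha+\beta}d_2$ forces $\lambda_2>0$, the idea is to choose a test function that kills the penalty $\int(\alpha\phi_1-\beta\phi_2)^2$ while activating the equality case above: take $(\phi_1,\phi_2)=(\beta w^*,\alpha w^*)$. Then $\alpha\phi_1-\beta\phi_2\equiv0$, and inserting $\int_\Omega(m-w^*)(w^*)^2=d_3\int_\Omega|\nabla w^*|^2$ gives, after a direct computation,
$$Q(\beta w^*,\alpha w^*)=\alpha\beta(\alpha+\beta)(d_3-d^0)\int_\Omega|\nabla w^*|^2.$$
Since $\int_\Omega|\nabla w^*|^2>0$, this is positive when $d_3>d^0$, so $\lambda_2>0$ at once. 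At the endpoint $d_3=d^0$ the test function only yields $Q=0$; here I would observe that $(\beta w^*,\alpha w^*)$ is never an eigenfunction of \eqref{EP7} — substituting it forces $(d_1-d_2)\Delta w^*\equiv0$, impossible since $d_1\ne d_2$ and $w^*$ is non–constant — so its Rayleigh quotient $0$ lies strictly below the supremum $\lambda_2$, and again $\lambda_2>0$.

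Finally, for the existence of $d_0$, I would combine the continuity of $\lambda_2$ in $d_3$ already noted with $\lambda_2(d_1)<0$ and $\lambda_2(d^0)>0$, so that the intermediate value theorem produces $d_0\in(d_1,d^0)$ with $\lambda_2(d_0)=0$. I expect the main difficulty to lie not in the algebra but in the two strict–inequality arguments: ruling out $\lambda_2=0$ when $d_3\le d_1$ and handling the endpoint $d_3=d^0$ both hinge on the simplicity of the scalar principal eigenvalue and on $w^*$ being non–constant, and the one genuinely non–routine step is the choice of the test function $(\beta w^*,\alpha w^*)$, which simultaneously annihilates the cross term and forces equality in the key inequality.
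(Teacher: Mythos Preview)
Your proof is correct and follows essentially the same approach as the paper: both arguments rest on the variational characterization of $\lambda_2$ coming from the self-adjoint weighted form \eqref{EP7A}, the scalar identity $\lambda(d_3,m-w^*)=0$, and the same test function (your $(\beta w^*,\alpha w^*)$ is a rescaling of the paper's $(w^*/\alpha,w^*/\beta)$), with the non-eigenfunction observation handling the endpoint $d_3=d^0$. The only cosmetic difference is that you argue directly on the Rayleigh quotient while the paper phrases both sign claims by contradiction, and for the $d_3\le d_1$ case you invoke the scalar inequality at $d_3$ whereas the paper invokes monotonicity of $\lambda(d,m-w^*)$ in $d$ to compare at $d_1,d_2$; these are equivalent.
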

\begin{proof}
	Let $(\phi_1,\phi_2)$ be the positive eigenfunction associated with $\lambda_2$. Then  
	multiplying the first and second equations of \eqref{EP7} by $\alpha \phi_1$ and $\beta \phi_2$, respectively, and then integrating over $\Omega$,  we see that
	\begin{eqnarray}\label{Ineq3}
	&&\alpha\left[-d_1\int_\Omega|\triangledown \phi_1|^2+\int_{\Omega}(m-w^*)\phi_1^{2}\right]+\beta\left[-d_2\int_\Omega|\triangledown \phi_2|^2+\int_{\Omega}(m-w^*)\phi_2^{2}\right]\nonumber\\
	&&=\int_{\Omega}(\alpha \phi_1-\beta \phi_2)^2+\lambda_2\int_\Omega{\alpha\phi^2_1+\beta\phi_2^2}.
	\end{eqnarray}
	
	In the case where $d_3\le d_1$, we claim that $\lambda_2<0$. Otherwise, there exists some $\tilde{d}_3 \le d_1$ such that $\lambda_2\ge0$. It then follows from $\eqref{Ineq3}$ that  for $w^*=w^*(\tilde{d}_3)$, $$\alpha\left[-d_1\int_\Omega|\triangledown \phi_1|^2+\int_{\Omega}(m-w^*)\phi_1^{2}\right]
	+\beta\left[-d_2\int_\Omega|\triangledown \phi_2|^2+\int_{\Omega}(m-w^*)\phi_2^{2}\right]\ge0.$$ However, the fact $\lambda(\tilde{d}_3,m-w^*)=0$ and $\tilde{d}_3\le d_1<d_2$ implies that $\lambda(d_2,m-w^*)<\lambda(d_1,m-w^*)\le0$, and hence $-d_1\int_\Omega|\triangledown \phi_1|^2+\int_{\Omega}(m-w^*)\phi_1^{2}\le0$ and $-d_2\int_\Omega|\triangledown \phi_2|^2+\int_{\Omega}(m-w^*)\phi_2^{2}<0$, which is a contradiction.
	
 Suppose there exists some $d_3\ge\frac{\beta}{\alpha+\beta}d_1+\frac{\alpha}{\alpha+\beta}d_2$ such that $\lambda_2\le0$.   Adapting the previous analysis to \eqref{EP7} by writing down the variational formula arising from \eqref{EP7A}, we have
	\begin{eqnarray*}
		0\ge\lambda_2=\sup_{(\phi_1,\phi_2)\in H^1(\Omega,\R^2)\setminus\{0\}} \left\{\frac{\alpha\left[-d_1\int_\Omega|\triangledown \phi_1|^2+\int_{\Omega}(m-w^*)\phi^2_1\right]}{\int_\Omega{(\alpha\phi^2_1+\beta\phi_2^2)}}\right.
		\\
		\left. +\frac{\beta\left[-d_2\int_\Omega|\triangledown \phi_2|^2+\int_{\Omega}(m-w^*)\phi_2^2\right]}{\int_\Omega{(\alpha\phi^2_1+\beta\phi_2^2})}-\frac{\int_{\Omega}(\alpha \phi_1-\beta \phi_2)^2}{\int_\Omega{(\alpha\phi^2_1+\beta\phi_2^2})}
		\right\}.
	\end{eqnarray*}
	
	Choose test functions $\phi_1=\frac{w^*}{\alpha}$ and $\phi_2=\frac{w^*}{\beta}$. If  $(\frac{w^*}{\alpha},\frac{w^*}{\beta})$ were an eigenfunction for \eqref{EP7A},  we could substitute into  the two equations in \eqref{EP7A} and subtract to see that $w^*$ would satisfy $(d_2-d_1)\Delta w^*=0$ with Neumann boundary conditions and hence would be constant, but $w^*$ cannot be constant,  so $(\frac{w^*}{\alpha},\frac{w^*}{\beta})$ cannot be an eigenfunction for \eqref{EP7A}. Thus it  follows that

	\begin{eqnarray*}\label{InEq5}
	&&\frac{-d_1\int_\Omega|\triangledown w^*|^2+\int_{\Omega}(m-w^*)w^{*2}}{\alpha}\nonumber+\frac{-d_2\int_\Omega|\triangledown w^*|^2+\int_{\Omega}(m-w^*)w^{*2}}{\beta}<0.
	\end{eqnarray*}
	Since $-d_3\int_\Omega|\triangledown w^*|^2+\int_{\Omega}(m-w^*)w^{*2}=0$, we have  \begin{eqnarray}\label{InEq6}
	&&\left(\frac{d_3-d_1}{\alpha}+\frac{d_3-d_2}{\beta}\right)\int_{\Omega}|\triangledown w^*|^2\le\frac{-d_1\int_\Omega|\triangledown w^*|^2+\int_{\Omega}(m-w^*)w^{*2}}{\alpha}\nonumber\\
	&&+\frac{-d_2\int_\Omega|\triangledown w^*|^2+\int_{\Omega}(m-w^*)\phi^{*2}}{\beta}<0.
	\end{eqnarray}
	This implies $\displaystyle d_3< \frac{\beta}{\alpha+\beta}d_1+\frac{\alpha}{\alpha+\beta}d_2$, a contradiction. 
	
	Since $\lambda_2$ depends continuously on $d_0$, there exists some $d_0\in(d_1,\frac{\beta}{\alpha+\beta}d_1+\frac{\alpha}{\alpha+\beta}d_2)$ such that $\lambda_2(d_0)=0$.
\end{proof}
\begin{remark}
	Here we are unable to show that there exists a unique $d_3>0$ such that $\lambda_2(d_3)=0$.
\end{remark}
Next we make an  observation on the nonexistence of positive steady states of \eqref{PDEModel2}.
\begin{lemma}\label{no}There exists a sufficiently small $\epsilon>0$ such that system \eqref{PDEModel2} admits no positive steady state (that is, no coexistence state)  when $d_3\in(0,d_1+\epsilon)\cup(\frac{\beta}{\alpha+\beta}d_1+\frac{\alpha}{\alpha+\beta}d_2-\epsilon,\infty)$. 
\end{lemma}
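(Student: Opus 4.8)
The plan is to convert the nonexistence of a coexistence state into a statement about the \emph{relative stability} of the two semi-trivial steady states $(u^*,v^*,0)$ and $(0,0,w^*)$, and then to read off the conclusion from the theory of strongly monotone (competitive) semiflows. Recall from the discussion preceding Proposition \ref{SS} that, once trajectories enter the attracting rectangle in which the $(u,v)$-block is cooperative, the flow generated by \eqref{PDEModel2} is strongly monotone with respect to the order $(u_1,v_1,w_1)\ge(u_2,v_2,w_2)\iff u_1\ge u_2,\,v_1\ge v_2,\,w_1\le w_2$, and in this order $(0,0,w^*)\le(u^*,v^*,0)$. By Proposition \ref{SS} the only nontrivial nonnegative boundary equilibria are these two, and by comparison any coexistence state $(U,V,W)$ satisfies $U\le u^*$, $V\le v^*$, $W\le w^*$, so it lies in the compact, positively invariant, attracting order interval bounded by $(0,0,w^*)$ and $(u^*,v^*,0)$.

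First I would pin down the stability types on the two parameter windows, using Lemmas \ref{uv} and \ref{w} together with continuous dependence of the stability-determining principal eigenvalues on $d_3$. Write $\bar d:=\frac{\beta}{\alpha+\beta}d_1+\frac{\alpha}{\alpha+\beta}d_2$. On the lower window: $\lambda_2(d_1)<0$ strictly (Lemma \ref{w}) and continuity give $\lambda_2<0$ on $(d_1,d_1+\epsilon')$ for some $\epsilon'>0$, so $(0,0,w^*)$ stays linearly stable there, while $(u^*,v^*,0)$ is unstable for every $d_3<d_c$ and $d_c>d_1$ (Lemma \ref{uv}); taking $\epsilon\le\min\{\epsilon',d_c-d_1\}$ makes $(u^*,v^*,0)$ unstable and $(0,0,w^*)$ stable throughout $(0,d_1+\epsilon)$. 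On the upper window: $\lambda(d_3,m-u^*-v^*)<0$ for $d_3>d_c$ with $d_c<\bar d$, so $(u^*,v^*,0)$ is stable, and since $\lambda_2(\bar d)>0$ strictly with $\lambda_2$ continuous, there is $\epsilon''>0$ with $\lambda_2>0$ on $(\bar d-\epsilon'',\infty)$, so $(0,0,w^*)$ is unstable; taking $\epsilon\le\min\{\epsilon'',\bar d-d_c\}$ gives $(u^*,v^*,0)$ stable and $(0,0,w^*)$ unstable throughout $(\bar d-\epsilon,\infty)$. Thus on each window exactly one semi-trivial equilibrium is linearly stable and the other linearly unstable.

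The nonexistence then follows from the dichotomy for strongly monotone competitive semiflows on the attracting order interval: when one endpoint equilibrium is linearly stable and the other linearly unstable, the stable one is globally asymptotically stable among positive data and no equilibrium can sit strictly between them, i.e. there is no coexistence state (apply, e.g., \cite[Theorem 9.2]{Hess}, or the order-interval/index argument for two-species competition). Concretely, on $(0,d_1+\epsilon)$ instability of $(u^*,v^*,0)$ and stability of $(0,0,w^*)$ force global convergence to $(0,0,w^*)$, whereas on $(\bar d-\epsilon,\infty)$ the roles reverse and $(u^*,v^*,0)$ attracts; either way \eqref{PDEModel2} has no positive steady state.

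The hard part will be the last step: excluding an interior equilibrium from purely linearized stability data. The clean mechanism is a fixed-point index computation on the attracting order interval (each linearly stable equilibrium has index $1$, the interval itself has index $1$, and a linearly unstable semi-trivial equilibrium whose unstable eigenvector points into the interval balances the count so that no interior equilibrium can occur). Carrying this out rigorously requires checking that the eigenvalues $\lambda(d_3,m-u^*-v^*)$ and $\lambda_2$ singled out in Lemmas \ref{uv} and \ref{w} are genuinely the stability-determining eigenvalues of the full three-component linearizations \eqref{EP8} and \eqref{EP6}—which the block-triangular structure of those problems reduces to the stated scalar and weighted problems—and that strong monotonicity and compactness of the semiflow hold uniformly up to the endpoints of the two windows, which is precisely where the continuity-in-$d_3$ estimates above are needed. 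I also note that the non-uniqueness of the zero of $\lambda_2$ (the Remark after Lemma \ref{w}) is harmless here, since both windows lie in regions where the eigenvalue signs are strict.
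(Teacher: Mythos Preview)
Your proposal has a genuine gap at exactly the point you flag as ``the hard part''. The implication you want---one semi-trivial equilibrium linearly stable, the other linearly unstable, hence no coexistence state---is \emph{not} a consequence of the abstract theory of monotone competitive semiflows. The results you cite (Hess, Hsu--Smith--Waltman) run in the opposite direction: they take nonexistence of a positive equilibrium as a \emph{hypothesis} and deduce global convergence to the stable semi-trivial state. Indeed, the paper itself invokes \cite[Theorem B]{Hsu} only in Theorem~4.6, and there it feeds in Lemma~\ref{no} as an input. Your argument would make Lemma~\ref{no} circular.

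The fixed-point index sketch does not close the gap either. With the stable endpoint contributing index $1$ and the unstable endpoint (one positive unstable direction into the interval) contributing index $0$, the remaining indices of any interior equilibria must sum to $0$. That is perfectly consistent with, say, a pair of positive equilibria of indices $+1$ and $-1$; nothing you have established rules this out. To make an index argument bite you would need an independent result such as ``every positive steady state of \eqref{PDEModel2} is linearly stable'' (the analogue of Proposition~\ref{AS} for the three-component system), and no such result is available here.

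The paper's proof proceeds quite differently and avoids this circularity. It first shows \emph{directly} that no positive steady state exists when $d_3\le d_1$ or $d_3\ge \bar d$, by multiplying the steady-state equations by suitable weights, integrating, and comparing with the variational characterizations of $\lambda(d_i,m-u_0-v_0-w_0)$ (the same mechanism as in Lemmas~\ref{uv} and~\ref{w}). It then extends nonexistence slightly past $d_1$ and slightly below $\bar d$ by a compactness/limit argument: assume a sequence of positive steady states as $d_3\to d_1^+$ (resp.\ $d_3\to \bar d^-$), pass to a $C^2$ limit, observe the limit must be one of the semi-trivial states, and derive a contradiction with the strict eigenvalue inequalities of Lemmas~\ref{uv} and~\ref{w}. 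This is a direct PDE argument, not an abstract dynamical-systems one.
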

\begin{proof}First, we prove that when $d_3\le d_1$ and $d_3\ge\frac{\beta}{\alpha+\beta}d_1+\frac{\alpha}{\alpha+\beta}d_2$, there is no positive steady state. The essential idea is similar to those in Lemmas 3.2 and 3.3. 
	Suppose that, by contradiction, $(u_0,v_0,w_0)$ is a positive steady state of \eqref{PDEModel2}. Then
	\begin{equation}\label{PDE3}
	\aligned
	&0=d_1\Delta u_0-\alpha u_0+\beta v_0+u_0(m(x)-u_0-v_0-w_0)&&\mbox{in}\,\,\Omega,\\
	&0=d_2 \Delta v_0+\alpha u_0-\beta v_0+v_0(m(x)-u_0-v_0-w_0)&&\mbox{in}\,\,\Omega,\\
	&0=d_3 \Delta w_0+w_0(m(x)-u_0-v_0-w_0)&&\mbox{in}\,\,\Omega,\\
	& \frac{\partial u_0}{\partial n}=\frac{\partial v_0}{\partial n}=\frac{\partial w_0}{\partial n}=0&&\mbox{on}\,\,\partial\Omega.
	\endaligned
	\end{equation}
	Note that $m-u_0-w_0-w_0$ is non-constant. Otherwise, as before, we can show $u_0$ and $v_0$ have to be constant. This implies $m$ is constant, impossible.
	
	Consider the case where $d_3\le d_1<d_2$. Multiplying the first and second equations by $\alpha u_0$ and $\beta v_0$, respectively, and then integrating over $\Omega$,  we see that
	\begin{eqnarray}\label{Ineq}
	&&\alpha\left[-d_1\int_\Omega|\triangledown u_0|^2+\int_{\Omega}(m-u_0-v_0-w_0)u^2_0\right]\nonumber\\
	&&+\beta\left[-d_2\int_\Omega|\triangledown v_0|^2+\int_{\Omega}(m-u_0-v_0-w_0)v^2_0\right]\\
	&&=\int_{\Omega}(\alpha u_0-\beta v_0)^2\ge0.\nonumber
	\end{eqnarray}
 Since $0=\lambda(d_3,m-u_0-v_0-w_0)\ge\lambda(d_1,m-u_0-v_0-w_0)>\lambda(d_2,m-u_0-v_0-w_0)$,
	the variational formula of the principal eigenvalue implies that $-d_1\int_\Omega|\triangledown u_0|^2+\int_{\Omega}(m-u_0-v_0-w_0)u^2_0\le0$ and $-d_2\int_\Omega|\triangledown v_0|^2+\int_{\Omega}(m-u_0-v_0-w_0)v^2_0<0$, a contradiction.
	
	In the case where $d_3\ge\frac{\beta}{\alpha+\beta}d_1+\frac{\alpha}{\alpha+\beta}d_2$,  let $$L\left(\begin{array}{c}
	\phi_1\\\phi_2
	\end{array}\right)=\left(\begin{array}{c}
	\alpha d_1\Delta \phi_1+[(m-u_0-v_0-w_0)\alpha-\alpha^2]\phi_1+ \alpha\beta \phi_2\\
	\beta d_2\Delta \phi_2+\alpha\beta \phi_1+[(m-u_0-v_0-w_0)\beta-\beta^2]\phi_2
	\end{array}\right).$$ Then $L$ is a self-adjoint operator. The principal eigenvalue of $L$ is 0, and we have the variational formula for the principal eigenvalue of $L$  
	\begin{eqnarray*}
		\lambda(L)=\sup_{(\phi_1,\phi_2)H^1(\Omega,\R^2)\setminus\{0\}} \left\{\frac{\alpha\left[-d_1\int_\Omega|\triangledown \phi_1|^2+\int_{\Omega}(m-u_0-v_0-w_0)\phi^2_1\right]}{\int_\Omega{\phi^2_1+\phi_2^2}}\right.
		\\
		\left. +\frac{\beta\left[-d_2\int_\Omega|\triangledown \phi_2|^2+\int_{\Omega}(m-u_0-v_0-w_0)\phi_2^2\right]}{\int_\Omega{\phi^2_1+\phi_2^2}}-\frac{\int_{\Omega}(\alpha \phi_1-\beta \phi_2)^2}{\int_\Omega{\phi^2_1+\phi_2^2}}
		\right\}
	\end{eqnarray*}
	Now let $\phi_1=\frac{w_0}{\alpha}$ and $\phi_2=\frac{w_0}{\beta}$. It follows that
	\begin{equation*}
	\frac{-d_1\int_\Omega|\triangledown w_0|^2+\int_{\Omega}(m\!-\!u_0-v_0-w_0)w^2_0}{\alpha}+\frac{-d_2\int_\Omega|\triangledown w_0|^2+\int_{\Omega}(m-u_0-v_0-w_0)w^2_0}{\beta}
	\end{equation*}
	is negative.
	Since $-d_3\int_\Omega|\triangledown w_0|^2+\int_{\Omega}(m-u_0-v_0-w_0)w^2_0=0$, it follows that
	$\left(\frac{d_3-d_1}{\alpha}+\frac{d_3-d_2}{\beta}\right)\int_{\Omega}|\triangledown w_0|^2<0$. This yields that $\displaystyle d_3< \frac{\beta}{\alpha+\beta}d_1+\frac{\alpha}{\alpha+\beta}d_2$, a contradiction.
	
	Motivated by \cite{CCL07}, now we suppose that when $d_3\to d^+_1$, there exists a sequence of positive steady states, denoted by  $(u^{d_3},v^{d_3},w^{d_3})$. By standard elliptic estimates, passing to a subsequence if necessary, we may assume that $(u^{d_3},v^{d_3},w^{d_3})\to(u_0,v_0,w_0)$ in $C^2(\bar\Omega)$ as $d_3\to d^+_1$ satisfying
	 \begin{equation}\label{eq5}
	 \aligned
	 &0=d_1\Delta u_0-\alpha u_0+\beta v_0+u_0(m(x)-u_0-v_0-w_0)&&\mbox{in}\,\,\Omega,\\
	 &0=d_2 \Delta v_0+\alpha u_0-\beta v_0+v_0(m(x)-u_0-v_0-w_0)&&\mbox{in}\,\,\Omega,\\
	 &0=d_1 \Delta w_0+w_0(m(x)-u_0-v_0-w_0)&&\mbox{in}\,\,\Omega,\\
	 & \frac{\partial u_0}{\partial n}=\frac{\partial v_0}{\partial n}=\frac{\partial w_0}{\partial n}=0&&\mbox{on}\,\,\partial\Omega.
	 \endaligned
	 \end{equation}
	It follows immediately from the previous analysis that when $d_3=d_1$ the non-negative steady state $(u_0,v_0,w_0)$ cannot be component-wise positive. 
	
	Suppose that $(u_0,v_0,w_0)\equiv(0,0,0)$ and let $$(\hat {u}^{d_3},\hat {v}^{d_3},\hat {w}^{d_3})=(\frac{u^{d_3}}{\|u^{d_3}\|_{L^\infty}+\|v^{d_3}\|_{L^\infty}},\frac{v^{d_3}}{\|u^{d_3}\|_{L^\infty}+\|v^{d_3}\|_{L^\infty}},\frac{w^{d_3}}{\|w^{d_3}\|_{L^\infty}}).$$ Divide the first two equations and the third equation of \eqref{eq5} by $\|u^{d_3}\|_{L^\infty}+\|v^{d_3}\|_{L^\infty}$ and $\|w^{d_3}\|_{L^\infty}$, respectively. Then using the elliptic estimates again, we may assume that 
 $(\hat u^{d_3}, \hat v^{d_3},\hat w^{d_3})\to(\hat u_0,\hat v_0,\hat w_0)$ in $C^2(\bar\Omega)$ as $d_3\to d^+_1$ satisfying
 \begin{equation}\label{eq6}
 \aligned
 &0=d_1\Delta \hat u_0+\hat u_0(m(x)-\alpha)+\beta\hat v_0&&\mbox{in}\,\,\Omega,\\
 &0=d_2 \Delta \hat v_0+\alpha \hat u_0+\hat v_0(m(x)-\beta)&&\mbox{in}\,\,\Omega,\\
 &0=d_1 \Delta \hat w_0+\hat w_0m(x)&&\mbox{in}\,\,\Omega,\\
 & \frac{\partial \hat u_0}{\partial n}=\frac{\partial \hat v_0}{\partial n}=\frac{\partial\hat w_0}{\partial n}=0&&\mbox{on}\,\,\partial\Omega.
 \endaligned
 \end{equation}
 The third equation in \eqref{eq6} implies that either $\int m<0$ or $\hat w_0\equiv0$, which contradicts (H) or $\|\hat w_0\|_{L^{\infty}}=1$. Thus $(u_0,v_0,w_0)$ cannot   be compnentwise-positive or have all components zero. 
 Now if non-zero $(u_0,v_0,w_0)$ has at least one component that is identically to zero, then $(u_0,v_0,w_0)$ must be either $(u^*,v^*,0)$ or $(0,0,w^*_{d_1})$ in view of Proposition \ref{SS}. 
 
 If the former case occurs,  we have $0=d_1 \Delta \hat w_0+\hat w_0(m(x)-u^*-v^*)$ with zero Neumann boundary condition, where $\hat w_0$ is the limit of $\hat w^{d_3}=\frac{w^{d_3}}{\|w^{d_3}\|_{L^\infty}}$ as $d_3\to d^+_1$. Since $\|\hat w_0\|_{L^{\infty}}=1$, we see from the strong  maximum principle (looking at $w_t=d_1 \Delta \hat w_0+\hat w_0(m(x)-u^*-v^*)$)  that $\hat w_0>0$ in $\bar \Omega$, and hence, $\lambda(d_1,m-u^*-v^*)=0$, which contradicts Lemma \ref{uv}.
 
 Likewise, if the latter case occurs, we have   \begin{equation}\aligned\label{eq7}
 &0=d_1\Delta\hat u_0-\alpha \hat u_0+\beta \hat v_0+\hat u_0(m(x)-w^*_{d_1})&&\mbox{in}\,\,\Omega,\\
 &0=d_2 \Delta \hat v_0+\alpha \hat u_0-\beta \hat v_0+\hat v_0(m(x)-w^*_{d_1})&&\mbox{in}\,\,\Omega,\\
 &\frac{\partial \hat u_0}{\partial n}=\frac{\partial \hat v_0}{\partial n}=0&&\mbox{on}\,\,\partial\Omega.
 \endaligned
 \end{equation} Now $\|\hat u_0\|_{L^{\infty}}+\|\hat v_0\|_{L^{\infty}}=1$ and $\hat u_0\ge0, \hat v_0\ge0$. Then $\hat u_0>0$ and $\hat v_0>0$ on $\bar{\Omega}$ due to the  maximum principle. This implies that $\lambda_2=0$ in \eqref{EP6} if $d_3=d_1$, which contradicts  Lemma \ref{w}. 
 
 We can use a similar indirect argument to prove that when $d_3\to d^-$ where $d=\frac{\beta}{\alpha+\beta}d_1+\frac{\alpha}{\alpha+\beta}d_2$, there is no positive steady state. For simplicity, we use the exact notation as before. Following the same process,  we show that the non-zero limiting steady state $(u_0,v_0,w_0)$ must be either $(u^*,v^*,0)$ or $(0,0,w^*_{d})$.
 If the former case happens, it gives $\lambda(d,m-u^*-v^*)=0<\lambda(d_c,m-u^*-v^*)=0$, a contradiction. If the latter case happens, it implies $\lambda_2=0$ when $d_3=d$, also a contradiction.
 
 Based on the above discussion, the result  follows.
 \end{proof}

  We can combine the results on the stability or instability of semi-trivial steady states with monotone dynamical systems theory to obtain some results on  the dynamics of \eqref{PDEModel2}.
Let $X_1=C(\bar\Omega,\R^2)$, $X_2=C(\bar\Omega,\R)$, $X^+_1=C(\bar\Omega,\R^2_+)$ and $X^+_2=C(\bar\Omega,\R_+)$.  As noted previously, \eqref{PDEModel2} generates a monotone semi-flow on $X_1 \times X_2$ with respect to the cooperative cooperative-competitive ordering.
 \begin{theorem}
 	Assume that (H) holds. Then there exist $d_1<C_1\le C_2<\frac{\beta}{\alpha+\beta}d_1+\frac{\alpha}{\alpha+\beta}d_2$ such that the following statements are valid for system \eqref{PDEModel2}.
 	\begin{enumerate}
 	\item[(i)] $(0,0,w^*)$ is globally asymptotically stable in $X^+_1\times (X^+_2\setminus\{0\})$ when $d_3\in(0,C_1)$.
 	\item[(ii)] $(u^*,v^*,0)$ is globally asymptotically stable in $(X^+_1\setminus\{0\})\times X^+_2$ when $d_3\in(C_2,\infty)$.
 	
	 	\end{enumerate}
  \end{theorem}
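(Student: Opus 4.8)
The plan is to treat \eqref{PDEModel2} as a strongly monotone semiflow with respect to the cooperative–cooperative–competitive order $(u_1,v_1,w_1)\ge(u_2,v_2,w_2)\iff u_1\ge u_2,\ v_1\ge v_2,\ w_1\le w_2$, and then to apply the competitive–exclusion dichotomy from monotone dynamical systems theory. First I would record the structural facts. Writing $\tilde w=-w$ converts \eqref{PDEModel2} into a system whose reaction has nonnegative off–diagonal derivatives exactly on $\{0\le u\le\beta,\ 0\le v\le\alpha,\ w\ge0\}$, and on that set the coupling is irreducible whenever all three components are positive. By the boundedness arguments behind Proposition \ref{Cases} every positive orbit eventually enters $\{u\le\beta,\ v\le\alpha\}$, so the semiflow is eventually strongly order–preserving. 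Standard boundedness together with parabolic regularity yields a compact global attractor, and comparison with the decoupled problems places that attractor in the order interval $[E_2,E_1]$, where $E_1:=(u^*,v^*,0)$ and $E_2:=(0,0,w^*)$: from $u_t\le d_1\Delta u-\alpha u+\beta v+u(m-u-v)$ (and the analogue for $v$) one gets $\limsup_{t\to\infty}(u,v)\le(u^*,v^*)$ via \eqref{subPDEModel}, while $w_t\le d_3\Delta w+w(m-w)$ gives $\limsup_{t\to\infty}w\le w^*$. Since $u^*<\beta$ and $v^*<\alpha$ (Lemma \ref{uv}), the interval $[E_2,E_1]$ lies strictly inside the region of cooperativity, so the semiflow is genuinely strongly monotone there, and in this order $E_2\le E_1$.

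Next I would pin down the thresholds. By Lemma \ref{w} the eigenvalue $\lambda_2(d_3)$ governing $(0,0,w^*)$ is negative for $d_3\le d_1$, positive for $d_3\ge\frac{\beta}{\alpha+\beta}d_1+\frac{\alpha}{\alpha+\beta}d_2$, and continuous in $d_3$; by Lemma \ref{uv}, $(u^*,v^*,0)$ is unstable for $d_3<d_c$ and stable for $d_3>d_c$ with $d_1<d_c<\frac{\beta}{\alpha+\beta}d_1+\frac{\alpha}{\alpha+\beta}d_2$; and by Lemma \ref{no} no coexistence state exists for $d_3\in(0,d_1+\epsilon)\cup(\frac{\beta}{\alpha+\beta}d_1+\frac{\alpha}{\alpha+\beta}d_2-\epsilon,\infty)$. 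Using continuity of $\lambda_2$, choose $\delta>0$ so that $\lambda_2<0$ on $(0,d_1+\delta)$ and $\lambda_2>0$ on $(\frac{\beta}{\alpha+\beta}d_1+\frac{\alpha}{\alpha+\beta}d_2-\delta,\infty)$, and set
$$C_1:=\min\Big\{d_c,\,d_1+\epsilon,\,d_1+\delta\Big\},\qquad C_2:=\max\Big\{d_c,\,\tfrac{\beta}{\alpha+\beta}d_1+\tfrac{\alpha}{\alpha+\beta}d_2-\min\{\epsilon,\delta\}\Big\}.$$
Then $d_1<C_1\le d_c\le C_2<\frac{\beta}{\alpha+\beta}d_1+\frac{\alpha}{\alpha+\beta}d_2$ (shrinking $\epsilon$ if needed), so that on $(0,C_1)$ the endpoint $E_2$ is linearly stable, $E_1$ is linearly unstable, and there is no coexistence state, whereas on $(C_2,\infty)$ the endpoint $E_1$ is linearly stable, $E_2$ is linearly unstable, and again no coexistence state exists.

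With these facts I would invoke the competitive–exclusion theorem for strongly monotone semiflows (the limit–set dichotomy together with the Dancer–Hess connecting–orbit lemma; see \cite[Theorem 9.2]{Hess} and the order–structure discussion in \cite{Smith1,Smith2}): if $E_1,E_2$ are the only equilibria in $[E_2,E_1]$, exactly one of them is linearly stable and the other linearly unstable, then the stable endpoint attracts every orbit that starts in $[E_2,E_1]$ with both species present. For $d_3\in(0,C_1)$ this gives global attraction to $E_2=(0,0,w^*)$, and for $d_3\in(C_2,\infty)$ to $E_1=(u^*,v^*,0)$. Finally I would dispose of the degenerate initial data: if the component belonging to the losing species vanishes at $t=0$, the system collapses to \eqref{subPDEModel} or to \eqref{subPDEModel2}, whose global convergence (Proposition \ref{ST} and scalar logistic theory) yields the same limit. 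Hence in part (i) the conclusion holds on $X^+_1\times(X^+_2\setminus\{0\})$ and in part (ii) on $(X^+_1\setminus\{0\})\times X^+_2$; linear stability of the attracting endpoint upgrades ``globally attractive'' to ``globally asymptotically stable'' via \cite[Theorem 7.6.2]{H}.

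The step I expect to be the main obstacle is the rigorous application of the monotone–dynamics dichotomy, because the cooperative (hence monotone) structure of \eqref{PDEModel2} holds only after orbits enter $\{u\le\beta,\ v\le\alpha\}$; the semiflow is merely \emph{eventually} strongly monotone, so one must first show the global attractor lies in $[E_2,E_1]$ and then restrict to that invariant order interval before the abstract theorem applies. A secondary difficulty is that Lemma \ref{no} excludes coexistence states only on the two end–intervals, which forces $C_1$ and $C_2$ to be chosen inside those intervals; and since uniqueness of the zero of $\lambda_2$ is not known (the Remark following Lemma \ref{w}), one cannot in general close the intermediate gap $[C_1,C_2]$, where coexistence may occur.
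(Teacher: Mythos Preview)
Your proposal is correct and follows essentially the same route as the paper's (sketch of) proof: verify the monotone structure for the cooperative--cooperative--competitive order, feed in Lemmas \ref{uv}--\ref{no} on stability of the two semi-trivial equilibria and nonexistence of coexistence states, and conclude via an abstract competitive-exclusion theorem. The only cosmetic differences are that the paper cites \cite[Theorem B]{Hsu} rather than the Dancer--Hess/\cite{Hess} formulation you use, and defines $C_1,C_2$ as the maximal no-coexistence intervals $C_1=\sup\{d:\text{no coexistence on }(0,d)\}$, $C_2=\inf\{d:\text{no coexistence on }(d,\infty)\}$, whereas your more conservative explicit choices also satisfy the stated bounds.
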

\begin{proof}[Sketch of proof:] 
We utilize the theory developed in \cite{Hsu} for abstract competitive systems (see also \cite{Hess2})
	to prove the global stability of one of the boundary steady states. Set $X=X_1\times X_2$, $K =X^+_1\times(-X^+_2)$ and $\text{Int} K =\text{Int}X^+_1\times(-\text{Int} X^+_2)$.  Then $K$ generates the partial order relations $\le_K$, $<_K$, $\ll_K$ on $X$. To prove statement (i) or (ii), we might set $E_0=(0,0)$, $E_1=(0,w^*)$, $E_2=(\hat{u},0)$ with $\hat{u}=(u^*,v^*)$.
	
	Clearly, (H1)--(H4) in \cite{Hsu} are valid. See also \cite{LM}.
	 Lemmas \ref{uv}-\ref{w} and Lemma \ref{no}, together with \cite[Theorem B]{Hsu}, implies statement (i) or (ii) is valid when $d_3\in(0,d_1+\epsilon)$ or $d_3\in(\frac{\beta}{\alpha+\beta}d_1+\frac{\alpha}{\alpha+\beta}d_2-\epsilon,\infty)$.
	 Now define 
	$$C_1:=\sup\{d: \text{there is no-coexistence steady state for}\, d_3\in(0,d)\},$$
	and 
	$$C_2:=\inf\{d: \text{there is no-coexistence steady state for}\, d_3\in(d,\infty)\}.$$
	Then it easily follows that $d_1<C_1\le d^0_3\le C_2< \frac{\beta}{\alpha+\beta}d_1+\frac{\alpha}{\alpha+\beta}d_2$. 
\end{proof}
Remark:  We expect that there are conditions under which the system \eqref{PDEModel2} has a coexistence state but we will not pursue that point here.

 \section{Effects of switching rates on the dynamics}
 Throughout this section, we assume that hypothesis (H) holds, so the results of Section 3 apply.  When $d_3\le d_1$ or $d_3\ge d_2$, the species having slower diffusion also wins the competition. In order to study the effects of switching rate on the competition we only focus on the case when $d_1<d_3<d_2$.
 
 \begin{lemma}\label{beta}Assume that $d_1<d_3<d_2$ and $\max_{x\in\bar\Omega}m(x)\le \alpha$.  Then the following statements are valid.
 	\begin{enumerate}
 		\item [(i)]There exists a unique $\beta_c\in(0,\frac{d_2-d_3}{d_3-d_1}\alpha)$, such that $(0,0,w^*)$ is linearly stable when $\beta\in(0,\beta_c)$; linearly unstable when $\beta\in(\beta_c,\infty)$.
 		\item[(ii)] $(u^*,v^*,0)$ is linearly unstable if $\beta$ is small enough; linearly stable if $\beta\in[\frac{d_2-d_3}{d_3-d_1}\alpha,\infty)$.
 		\item[(iii)]There exists small $\epsilon>0$, such that system \eqref{PDEModel2} admits no positive steady state when $\beta\in(0,\epsilon)\cup (\frac{d_2-d_3}{d_3-d_1}\alpha-\epsilon,\infty)$. 
 	\end{enumerate} 
 	\end{lemma}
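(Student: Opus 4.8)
The plan is to establish (i), (ii) and (iii) in parallel with Lemmas \ref{uv}, \ref{w} and \ref{no}, the essential difference being that the bifurcation parameter is now the switching rate $\beta$ rather than the diffusion rate $d_3$. The decisive simplification is that $w^*$ solves the scalar logistic problem \eqref{subPDEModel2}, which does not involve $\beta$, so as $\beta$ varies $w^*$ stays fixed; it is exactly this that will let me prove the \emph{uniqueness} of $\beta_c$ that was unavailable in Lemma \ref{w}. For (i), the stability of $(0,0,w^*)$ is governed, as in Lemma \ref{w}, by the principal eigenvalue $\lambda_2=\lambda_2(\beta)$ of the sub-problem \eqref{EP7}, whose weighted form \eqref{EP7A} is self-adjoint. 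At $\beta=0$ the reaction matrix in \eqref{EP7} is triangular, so $\lambda_2(0)=\max\{\lambda(d_1,m-w^*-\alpha),\lambda(d_2,m-w^*)\}$; the hypothesis $\max_{\bar\Omega}m\le\alpha$ forces $m-w^*-\alpha<0$ and hence $\lambda(d_1,m-w^*-\alpha)<0$, while $d_2>d_3$ together with $\lambda(d_3,m-w^*)=0$ gives $\lambda(d_2,m-w^*)<0$ by Proposition \ref{PE}(c). Thus $\lambda_2(0)<0$. At the other end, testing the Rayleigh quotient of \eqref{EP7A} with $(\phi_1,\phi_2)=(w^*/\alpha,\,w^*/\beta)$ makes the coupling term $\int(\alpha\phi_1-\beta\phi_2)^2$ vanish, and using $\int(m-w^*)w^{*2}=d_3\int|\nabla w^*|^2$ reduces the numerator to $\big(\tfrac{d_3-d_1}{\alpha}+\tfrac{d_3-d_2}{\beta}\big)\int|\nabla w^*|^2$; since this pair is not the eigenfunction, $\lambda_2(\beta)>0$ whenever $\beta\ge\frac{d_2-d_3}{d_3-d_1}\alpha$.

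The heart of (i) is uniqueness, and this is the place where I would carry out in detail the eigenfunction differentiation flagged earlier in the paper. After recording that $\lambda_2$ is simple and analytic in $\beta$ (Krein--Rutman plus standard perturbation theory), I differentiate \eqref{EP7} in $\beta$, using that the adjoint eigenfunction of \eqref{EP7} is $(\alpha\phi_1,\beta\phi_2)$, to obtain $\lambda_2'(\beta)=\big[\int\phi_2(\alpha\phi_1-\beta\phi_2)\big]\big/\int(\alpha\phi_1^2+\beta\phi_2^2)$. Testing the second equation of \eqref{EP7} against $\phi_2$ gives, at any zero of $\lambda_2$, the identity $\int\phi_2(\alpha\phi_1-\beta\phi_2)=-\big(-d_2\int|\nabla\phi_2|^2+\int(m-w^*)\phi_2^2\big)\ge-\lambda(d_2,m-w^*)\int\phi_2^2>0$, the last inequality because $\lambda(d_2,m-w^*)<0$. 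Hence every zero of $\lambda_2$ is a strict upcrossing, so $\lambda_2$ vanishes exactly once; combined with the sign data above this yields a unique $\beta_c\in\bigl(0,\frac{d_2-d_3}{d_3-d_1}\alpha\bigr)$.

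For (ii), the stability of $(u^*,v^*,0)$ is governed by $\lambda(d_3,m-u^*-v^*)$ as in Lemma \ref{uv}, but now $(u^*,v^*)$ depends on $\beta$ through \eqref{subPDEModel}. For large $\beta$ I repeat the self-adjoint argument of Lemma \ref{uv}: building $L$ from \eqref{PDE4} (principal eigenvalue $0$, eigenfunction $(u^*,v^*)$) and testing with $\phi^*/\alpha,\phi^*/\beta$, the hypothesis $\lambda(d_3,m-u^*-v^*)\ge0$ forces $\frac{d_3-d_1}{\alpha}+\frac{d_3-d_2}{\beta}<0$, i.e. $\beta<\frac{d_2-d_3}{d_3-d_1}\alpha$; contrapositively $(u^*,v^*,0)$ is stable for $\beta\ge\frac{d_2-d_3}{d_3-d_1}\alpha$. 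For small $\beta$ I would show by elliptic estimates that $(u^*,v^*)\to(0,w^*_{d_2})$ as $\beta\to0^+$, where $w^*_{d_2}$ is the logistic steady state for diffusion $d_2$: the limit $u_0\equiv0$ follows from $\max_{\bar\Omega}m\le\alpha$, and the surviving component is nontrivial because, under (H), the trivial state of \eqref{subPDEModel} is uniformly unstable near $\beta=0$ (its linearization has spectral bound $\ge\lambda(d_2,m)>0$), giving a uniform persistence lower bound. Then $\lambda(d_3,m-u^*-v^*)\to\lambda(d_3,m-w^*_{d_2})>0$ since $d_3<d_2$, so $(u^*,v^*,0)$ is unstable for small $\beta$.

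Finally (iii) mirrors Lemma \ref{no}: the direct Rayleigh-quotient argument (test $L$ with $w_0/\alpha,\,w_0/\beta$) excludes any coexistence state once $\beta\ge\frac{d_2-d_3}{d_3-d_1}\alpha$, and for both endpoints I argue by contradiction through rescaled limits. If coexistence states persisted as $\beta\to0^+$ or $\beta\to\bigl(\frac{d_2-d_3}{d_3-d_1}\alpha\bigr)^-$, elliptic estimates produce a limit in which $u_0\equiv0$; the residual pair $(v_0,w_0)$ then solves the classical two-species system with rates $d_2>d_3$, whose ``slower diffuser wins'' dichotomy \cite{Dockery,HN} leaves only $(0,0,0)$, $(u^*,v^*,0)$ or $(0,0,w^*)$ (using Proposition \ref{SS}), and normalizing the vanishing component forces one of $\lambda(d_3,m)=0$, $\lambda(d_3,m-u^*-v^*)=0$, or $\lambda_2=0$, contradicting (H), part (ii), or part (i) respectively. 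I expect the main obstacle to be the uniqueness claim in (i): making the differentiation of the eigenpair $(\lambda_2,\phi_1,\phi_2)$ in $\beta$ fully rigorous (analyticity of the normalized eigenfunctions and justification of the integrations by parts) and then pinning down the sign of $\lambda_2'$ at a zero. A secondary delicate point is the bookkeeping of relative decay rates (e.g.\ $u^\beta=O(\beta)$) needed to kill the cross terms in the rescaled limits of (iii).
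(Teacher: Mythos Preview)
Your approach matches the paper's almost exactly: the derivative formula $\lambda_2'(\beta)=\int\phi_2(\alpha\phi_1-\beta\phi_2)\big/\int(\alpha\phi_1^2+\beta\phi_2^2)$ and the sign argument via the second equation of \eqref{EP7} are precisely what the paper uses (the paper phrases it as ``$\lambda_2\ge0\Rightarrow\lambda_2'>0$'', marginally stronger than your ``every zero is an upcrossing'', but equivalent for the conclusion), and the implicit-function-theorem differentiability of the eigenpair that you flag as the main technical hurdle is indeed carried out in detail in the paper's proof.

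Two points of friction, neither fatal. First, your direct evaluation ``$\lambda_2(0)=\max\{\lambda(d_1,m-w^*-\alpha),\lambda(d_2,m-w^*)\}$'' is heuristic: at $\beta=0$ the coupling in \eqref{EP7} is only one-directional and the Krein--Rutman framework degenerates, so $\lambda_2(0)$ is not a priori defined. The paper sidesteps this by contradiction, taking $\beta_n\to0^+$ with $\lambda_2(\beta_n)\ge0$, passing to a limit system \eqref{EP9}, and noting that the limiting value must equal one of the two negative numbers you named. Second, in (iii) you have conflated the two endpoints. The claim ``$u_0\equiv0$ in the limit'' is correct only for $\beta\to0^+$ (where $0<u^\beta<\beta$ forces it); it is \emph{false} for $\beta\to\beta_0^-$ with $\beta_0=\tfrac{d_2-d_3}{d_3-d_1}\alpha$. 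At that endpoint the limit $(u_0,v_0,w_0)$ solves the full three-component steady-state system at $\beta=\beta_0$; your Rayleigh-quotient argument already excludes positivity there, so Proposition~\ref{SS} gives the trichotomy $(0,0,0)$, $(u^*,v^*,0)$, $(0,0,w^*)$ directly (this is the list you wrote), and the normalized-limit contradictions run exactly as in Lemma~\ref{no}. For $\beta\to0^+$, on the other hand, the two-species reduction you describe is what the paper uses, but the candidate limits for the residual pair $(v_0,w_0)$ are $(0,0)$, $(v(d_2),0)$, $(0,w^*(d_3))$ --- not the Proposition~\ref{SS} list --- and each is eliminated by normalizing a vanishing component and comparing $\lambda(d_2,\cdot)$ with $\lambda(d_3,\cdot)$.
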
 
 \begin{proof}
 	 	For statement (i), it suffices to check the principal eigenvalue $\lambda_2$ of \eqref{EP6} (equivalently \eqref{EP7} or \eqref{EP7A}) in terms of $\beta$. We prove that $\lambda_2$ is continuously differentiable on $\beta>0$ by the implicit function theorem. 
 	Let $E=C^{2+\alpha}(\bar{\Omega},\R)\times C^{2+\alpha}(\bar{\Omega},\R)\times \R$ and $F=C^{\alpha}(\bar{\Omega},\R)\times C^{\alpha}(\bar{\Omega},\R)\times \R,$ $0<\alpha<1$, and consider a mapping $\Phi:E\times (0,\infty)\to F$ given by
 	$$\Phi(v_1,v_2,s,\beta)=\left(\begin{array}{lll}
 	d_1\Delta v_1+(m-w^*-\alpha) v_1+\beta v_2-sv_1\\ d_2\Delta v_2+(m-w^*-\beta) v_2+\alpha v_1-sv_2\\\int_\Omega{(v^2_1+v_2^2)}-1
 	\end{array}\right)$$
 Note that $\Phi$ is a continuous map and that the linearization of $\Phi$ with respect to $E$ at $(v_1,v_2,s,\beta)$, denoted $D_1\Phi(v_1,v_2,s,\beta): E\to F$, is given by
 	$$[D_1\Phi(v_1,v_2,s,\beta)](w_1,w_2,t)=\left(\begin{array}{lll}
 	d_1\Delta w_1+(m-w^*-\alpha) w_1+\beta w_2-sw_1-tv_1\\ d_2\Delta w_2+(m-w^*-\beta) w_2+\alpha w_1-sw_2-tv_2\\2\int_\Omega{(v_1w_1+v_2w_2)}
 	\end{array}\right).$$
 	Let $(v_{10},v_{20})=(\phi_1(\beta_0),\phi_2(\beta_0))$ and $s_0=\lambda_2(\beta_0)$. Here $(\phi_1(\beta_0),\phi_2(\beta_0))$ is the positive eigenfunction corresponding to $\lambda_2(\beta_0)$ with $\int_\Omega{[\phi_1(\beta_0)]^2+[\phi_2(\beta_0)]^2}=1$. Our  next goal is to show that $D_1\Phi(v_1,v_2,s,\beta)$ is a bijection. 
 	
 	Suppose then that $D_1\Phi(v_{10},v_{20},\lambda_2(\beta_0),\beta_0)(w_1,w_2,t)=(0,0,0).$ Then $d_1\Delta w_1+(m-w^*-\alpha-\lambda_2(\beta_0))w_1+\beta_0w_2=tv_{10}$, $d_2\Delta w_2+(m-w^*-\beta_0-\lambda_2(\beta_0))w_2+\alpha w_1=tv_{20}$ with zero Neumann boundary condition, and $\int_{\Omega}w_1v_{10}+w_2v_{20}=0$.
 	
 	Direct calculations similar to those in 
 	Proposition \ref{AS} indicate that
 	\begin{eqnarray}\label{v}
 &&	d_1\triangledown\cdot\left(v_{10}\triangledown w_{10}-w_{10}\triangledown v_{10}\right)+\beta_0(w_2v_{10}-v_{20}w_1)=tv^2_{10}\nonumber\\
 &&	d_2\triangledown\cdot\left(v_{20}\triangledown w_{20}-w_{20}\triangledown v_{20}\right)+\alpha(w_1v_{20}-v_{10}w_2)=tv^2_{20}\\
 &&	\frac{\partial w_1}{\partial n}=	\frac{\partial w_2}{\partial n}=0. \nonumber
 	\end{eqnarray}
  Multiply the equations of \eqref{v} by $\alpha$ and  $\beta_0$, respectively, then integrate over $\Omega$, and lastly add together. It then follows that  $0=t[\int_\Omega{\alpha v^2_{10}+\beta_0v_{20}^2}]$, and hence, $t=0$. Since $\lambda_2(\beta_0)$ is the principal eigenvalue of \eqref{EP7}, we see from the algebraic simplicity of $\lambda_2(\beta_0)$ that $(w_{10},w_{20})\in$ Span $\{(v_{10},v_{20})\}$. Let $\frac{w_{10}}{v_{10}}=\frac{w_{20}}{v_{20}}=c$.  Then the fact $\int_{\Omega}w_1v_{10}+w_2v_{20}=0$ implies $c=0$, and hence, $w_{10}=w_{20}=0$.
 	
 	 Let $(h_1,h_2,r)\in F$. Then consider equations
 	 \begin{eqnarray}\label{eq8}
 	 &&d_1\Delta w_1+(m-w^*-\alpha-\lambda_2(\beta_0))w_1+\beta_0w_2-tv_{10}=h_1,\nonumber\\
 	 &&d_2\Delta w_2+(m-w^*-\beta_0-\lambda_2(\beta_0))w_2+\alpha w_1-tv_{20}=h_2,\\
 	 &&2\int_{\Omega}w_1v_{10}+w_2v_{20}=r, \frac{\partial w_i}{\partial n}=0, i=1,2.\nonumber
 	 \end{eqnarray}
 	For simplicity, we use the inner product  $\langle\phi,\psi\rangle=\int_{\Omega}\phi^T\psi$. Denote $w=(w_1,w_2)^T$ and $v_0=(v_{10},v_{20})^T$. Then solving  \eqref{eq8} is equivalent to solving the inhomogeneous equation 
 	 $Lw=G$ with zero Neumann condition and the constraint $2\langle w,v_0\rangle=r$,
 	 where the self-adjoint operator  $L:C^{2+\alpha}(\bar{\Omega},\R^2)\to C^{\alpha}(\bar{\Omega},\R^2)$ s given by      $$L\left(\begin{array}{c}
     \phi_1\\\phi_2
     \end{array}\right)=\left(\begin{array}{c}
     \alpha d_1\Delta \phi_1+\alpha[m-w^*-\alpha-\lambda_2(\beta_0)]\phi_1+ \alpha\beta_0 \phi_2\\
     \beta_0 d_2\Delta \phi_2+\alpha\beta_0 \phi_1+\beta_0[m-w^*-\beta_0-\lambda_2(\beta_0)]\phi_2
     \end{array}\right),$$
 	and $G$ is given by 
 	$$G=\left(\begin{array}{c}
 	\alpha t v_{10}+\alpha h_1\\
 	\beta_0 t v_{20}+\beta_0 h_2
 	\end{array}\right).$$
Since the solution set of homogeneous equation $Lw=0$ is Span$\{v_0\}$,  the solvability criterion for $Lw=G$ is
 	$$\langle G,v_0\rangle=\langle Lw,v_0\rangle=\langle w,Lv_0\rangle=0.$$
 	A simple calculation shows that $t=-\frac{\int_{\Omega}\alpha v_{10}h_1+\beta_0v_{20}h_2}{\int_{\Omega}\alpha v^2_{10}+\beta_0v^2_{20}}$. Moreover, solutions of $Lw=G$ with zero Neumann boundary conditions can  be written in the form $z+kv_0$, where $k$ is an arbitrary constant and $z$ is uniquely determined by the requirement 
		 $\langle z,v_0\rangle=0$. Now choose $k=\frac{r}{2}$. Then $w=z+kv_0$ is a solution of \eqref{eq8}.  It then follows from implicit function theorem that $\lambda_2(\beta)$ and
 		 $(\phi_1(\beta),\phi_2(\beta))$ are continuously differentiable in $\beta$.
 		 
 		 Taking the derivative with respect to $\beta$ in \eqref{EP7} (or eqivalently in $L\phi=0$), we obtain a system equivalent to 
 		 $L\tilde{\phi}=f$ with $\tilde{\phi}=(\phi_1'(\beta),\phi_2' (\beta))^T$,

		  and $f=(\lambda'_2(\beta)\alpha\phi_1-\alpha\phi_2,\lambda'_2(\beta)\beta\phi_2+\beta\phi_2)^T$. A simple computation shows that
 		 $$0=\langle L\phi,\tilde{\phi}\rangle=\langle\phi,L\tilde{\phi}\rangle=\langle \phi,f\rangle, $$
 		 that is, $\displaystyle\lambda'_2(\beta)=\frac{\int_\Omega \alpha\phi_1\phi_2-\beta\phi^2_2}{\int_{\Omega}\alpha\phi^2_1+\beta\phi^2_2}$. Since $\lambda(d_2,m-w^*)<\lambda(d_3,m-w^*)=0$, we see from the second equation of \eqref{EP7} that $\lambda_2(\beta)\int_{\Omega}\phi^2_2-\int_{\Omega}(\alpha\phi_1\phi_2-\beta\phi^2_2)=-d_2\int_\Omega|\triangledown \phi_2|^2+\int_{\Omega}(m-w^*)\phi_2^{2}<0$. This shows if $\lambda_2(\beta)\ge0$, then $\lambda'_2(\beta)>0$. Moreover, $\lambda_2(\beta)$ changes sign at most once. 
 		  	
 		By essentially the same argument as in Lemma \ref{w}, we obtain that when $\beta\ge\frac{d_2-d_3}{d_3-d_1}\alpha$, $\lambda_2(\beta)>0$.    Suppose that $\lambda_2(\beta)$ doesn't change sign, then $\lambda_2(\beta)$ is bounded from below by zero, and there exists $\beta_n>0$ and $(\phi_{1n},\phi_{2n})$ with $\int_\Omega{\phi^2_{1n}+\phi_{2n}^2}=1$ such that $\lambda_2(\beta_n)\to A\ge0$ as $\beta_n\to0$. One may use the elliptic regularity to assume that $(\phi_{1n},\phi_{2n})\to (\hat \phi_1,\hat \phi_2)$ in $C^2(\bar\Omega)$ satisfying
 		 \begin{equation}\label{EP9}
 		 \aligned
 		 &A \hat\phi_1=d_1\Delta \hat\phi_1+(m(x)-w^*-\alpha)\hat\phi_1&&\mbox{in}\,\,\Omega,\\
 		 &A \hat \phi_2=d_2 \Delta \hat\phi_2+\alpha\hat\phi_1+(m(x)-w^*)\hat\phi_2&&\mbox{in}\,\,\Omega,\\
 		 & \frac{\partial \hat\phi_1}{\partial n}=\frac{\partial \hat\phi_2}{\partial n}=0&&\mbox{on}\,\,\partial\Omega.
 		 \endaligned
 		 \end{equation} 
 		 Since $\hat{\phi}_i\ge0, i=1,2,$ and $\int_\Omega{\hat\phi^2_{1}+\hat\phi_{2}^2}=1$, $A$ is either $\lambda(d_1,m-w^*-\alpha)<\lambda(d_1,m-\alpha)\le0$ or $\lambda(d_2,m-w^*)<\lambda(d_3,m-w^*)=0$, and hence, $A<0$, a contradiction. Statement (i) holds true.
 		 
 	 For statement (ii), we claim there exists some $\epsilon>0$ such that $\lambda(d_3,m-u^*(\beta)-v^*(\beta))>0$ if $\beta\in(0,\epsilon)$.
 	 If it is not true, then there exists $\beta_n\to 0(n\to\infty)$, $\lambda(d_3,m-u^*_n-v^*_n)\le0$ and $(u^*_n,v^*_n)\in(0,\beta_n)\times(0,\alpha)$. Since $\lambda(d_3,m-u^*_n-v^*_n)$  depends continuously  on $m-u^*_n-v^*_n$, we might assume that (up to a subsequence if necessary) $(u^*_n,v^*_n)\to (0,v^*_\infty)$ in $C^2(\bar \Omega)$ satisfying  $d_2 \Delta v^*_\infty+(m-v^*)v^*_\infty=0$ and $\lambda(d_3,m-v^*_n-u^*_n)\to \lambda(d_3,m-v^*_\infty)\le0$. 
 	If $v^*_\infty\equiv0$, then $\lambda(d_3,m)>0$ due to assumption (H), a contradiction. Otherwise, $v^*_\infty$ is positive, $m-v^*_{\infty}$ is non-constant and $\lambda(d_2,m-v^*_\infty)=0<\lambda(d_3,m-v^*_\infty)$, a contradiction again.
 	
 	In the case that $\beta\ge\frac{d_2-d_3}{d_3-d_1}\alpha$, it follows directly from Lemma \ref{uv} that $\lambda(d_3,m-u^*-v^*)<0$, that is, $(u^*,v^*,0)$ is linearly stable.
 	
 	For statement (iii), we show that when $\beta\to0^+$, there is no coexistence steady state. If not, then there exists $\beta_n\to 0(n\to\infty)$,  positive steady states $(u^0_n,v^0_n,w^0_n)$ and $(u^0_n,v^0_n)\in(0,\beta_n)\times(0,\alpha)$. Passing to the limit, we might assume that (up to a subsequence if necessary) $(u^0_n,v^0_n,w^0_n)\to(0,v^0_\infty,w^0_\infty)$ in $C^2(\bar{\Omega})$ satisfying
 	\begin{equation}\label{eq9}
 	\aligned
 	&0=d_2 \Delta v^0_\infty+v^0_\infty(m(x)-v^0_\infty-w^0_\infty)&&\mbox{in}\,\,\Omega,\\
 	&0=d_3 \Delta w^0_\infty+w^0_\infty(m(x)-v^0_\infty-w^0_\infty)&&\mbox{in}\,\,\Omega,\\
 	& \frac{\partial v^0_\infty}{\partial n}=\frac{\partial w^0_\infty}{\partial n}=0&&\mbox{on}\,\,\partial\Omega.
 	\endaligned
 	\end{equation}
 	Clearly, $v^0_\infty,w^0_\infty$ can not be both positive. There will be three possible cases, that is, (a) $(v^0_\infty,w^0_\infty)=(0,0)$; (b) $(v^0_\infty,w^0_\infty)=(0,w^*(d_3))$; (c) $(v^0_\infty,w^0_\infty)=(v(d_2),0)$. However,  essentially the same proof as in \cite[Lemma 4.5]{ZZ}  implies that none of them can happen.  Suppose that case (a) occurs.  Let 	
	$\widehat{v}_n=\displaystyle \frac{v^0_n}{||v^0_n||_{L^\infty}}$.  We have
	$$0=d_2\Delta\widehat{v}_n +(m(x)-u^0_n+v^0_n-w^0_n)\widehat{v}_n.$$
	
	We can assume, by passing to a subsequence if necessary, that $\widehat{v}_n \to \widehat{v}^*$ with 
	$||\widehat{v}^*||_{L^\infty}=1$, where $\widehat{v}^*$ satisfies $0=d_2\Delta\widehat{v}^* +(m(x)-w^*(d_3))\widehat{v}^*$.   However, $0=d_3\Delta w^*(d_3)+(m(x)-w^*(d_3))w^*(d_3)$, so the principal eigenvalue of the operator $Lw=d_3\Delta w +(m(x)-w^*(d_3))w$ is $0$, so the strict monotonicity of the principal eigenvalue with respect to the diffusion coefficient gives a contradiction to $0=d_2\Delta\widehat{v}^* +(m(x)-w^*(d_3))\widehat{v}^*$.  The argument for case (b) is very similar so we omit it. 
In case (c) we use $\widehat{w}_n=\displaystyle \frac{w^0_n}{||w^0_n||_{L^\infty}}$ and pass to  a limit $\widehat{w}^*$. An argument analogous to the one given previously for 
$\widehat{v}^*$ leads to the equation $0=d_3\Delta \widehat{w}^*+(m(x)-v(d_2))\widehat{w}^*$ with $||\widehat{w}^*||_{L^\infty}=1$, but since $0=d_2\Delta v(d_2)+(m(x)-v(d_2))\widehat{w}^*v(d_2)$ and $d_3 \neq d_2$ this also leads to a contradiction. Hence none of the cases (a),(b), or (c) is possible, so there cannot be a coexistence state as $\beta\to0^+$.
 	Following the same idea as in the proof of Lemma \ref{no}, we can see that $\beta\to\beta^-_0$ with $\beta_0=\frac{d_2-d_3}{d_3-d_1}\alpha$, there is no coexistence steady state. \end{proof}
A parallel result is stated below when $\alpha$ varies.
 	\begin{lemma} \label{alpha}Assume that $d_1<d_3<d_2$ and $\max_{x\in\bar\Omega}m(x)\le \beta$. Then the following statements are valid.
 		\begin{enumerate}
 			\item [(i)]There exists a unique $\alpha_c\in(\frac{d_3-d_1}{d_2-d_3}\beta,\infty)$, such that $(0,0,w^*)$ is linearly unstable when $\alpha\in(0,\alpha_c)$; linearly stable when $\alpha\in(\alpha_c,\infty)$.
 			\item[(ii)] $(u^*,v^*,0)$ is linearly stable when $\alpha$ is small enough; linearly unstable when $\alpha\in[\frac{d_3-d_1}{d_2-d_3}\beta,\infty)$.
 			\item[(iii)]There exists small $\epsilon>0$, such that system \eqref{PDEModel2} admits no positive steady state when $\alpha\in(0,\epsilon)\cup (\frac{d_3-d_1}{d_2-d_3}\beta-\epsilon,\infty)$.  
 		\end{enumerate}  
 	\end{lemma}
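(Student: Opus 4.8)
The plan is to treat this as the exact $\alpha$-analogue of Lemma~\ref{beta}, running the same three-part program: differentiability of the principal eigenvalue $\lambda_2$ of \eqref{EP7} in the varied parameter, a sign analysis of $\lambda_2'$ that forces at most one sign change, identification of the two endpoint regimes, and a blow-up/normalization argument ruling out coexistence states near the two ends of the admissible $\alpha$-interval. The one structural novelty is that the varied rate $\alpha$ now sits on the \emph{slow} component (diffusion $d_1$) rather than on the fast one, which reverses the sign of the decisive eigenvalue comparison that closed the $\beta$-case; overcoming this reversal is the crux.

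For (i) I would first reproduce the implicit-function-theorem argument of Lemma~\ref{beta} with $\alpha$ in place of $\beta$, obtaining that $\lambda_2(\alpha)$ and its normalized positive eigenfunction $(\phi_1,\phi_2)$ are $C^1$ on $(0,\infty)$ with $\lambda_2'(\alpha)=\bigl[\int_\Omega(\beta\phi_1\phi_2-\alpha\phi_1^2)\bigr]/\bigl[\int_\Omega(\alpha\phi_1^2+\beta\phi_2^2)\bigr]$. Here the naive mirror fails: eliminating the numerator through the \emph{first} equation of \eqref{EP7} produces $-d_1\int_\Omega|\nabla\phi_1|^2+\int_\Omega(m-w^*)\phi_1^2$, whose sign is indeterminate because $d_1<d_3$ forces $\lambda(d_1,m-w^*)>0$. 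The key step is to use instead the self-adjoint variational identity from \eqref{EP7A} to trade the $d_1$-term for the $d_2$-term, rewriting
\[
\int_\Omega(\beta\phi_1\phi_2-\alpha\phi_1^2)
=-\frac{\beta}{\alpha}\lambda_2\int_\Omega\phi_2^2
+\frac{\beta}{\alpha}\Bigl(-d_2\int_\Omega|\nabla\phi_2|^2+\int_\Omega(m-w^*)\phi_2^2\Bigr)
-\frac{1}{\alpha}\int_\Omega(\alpha\phi_1-\beta\phi_2)^2 .
\]
Since $\lambda(d_2,m-w^*)<\lambda(d_3,m-w^*)=0$, the middle bracket is strictly negative, so at every zero of $\lambda_2$ the right-hand side is strictly negative and $\lambda_2'(\alpha)<0$. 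Downward transversality at each zero gives at most one sign change. Combined with the endpoint limits $\lambda_2(\alpha)\to\lambda(d_1,m-w^*)>0$ as $\alpha\to0^+$ (the limiting operator is triangular with principal value $\lambda(d_1,m-w^*)$) and $\lambda_2(\alpha)\to\lambda(d_2,m-w^*)<0$ as $\alpha\to\infty$ (where $\phi_1\sim\tfrac{\beta}{\alpha}\phi_2\to0$), this yields a unique $\alpha_c$; that $\alpha_c>\tfrac{d_3-d_1}{d_2-d_3}\beta$ is then obtained from the same comparison with $\bar d=\tfrac{\beta}{\alpha+\beta}d_1+\tfrac{\alpha}{\alpha+\beta}d_2$ used in Lemma~\ref{w}.

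For (ii) and (iii) I would argue through the limiting profiles of the subsystem steady state, which by Proposition~\ref{Cases}(2) lies in $(0,\beta)\times(0,\alpha)$. As $\alpha\to0^+$ the bound $v^*<\alpha$ forces $v^*\to0$ and $u^*\to u^*(d_1)$, so the principal eigenvalue $\lambda(d_3,m-u^*-v^*)$ of \eqref{EP8} tends to $\lambda(d_3,m-u^*(d_1))<0$ because $d_3>d_1$ and $\lambda(d_1,m-u^*(d_1))=0$; hence $(u^*,v^*,0)$ is stable for small $\alpha$. As $\alpha\to\infty$ the summed equation forces $u^*\to0$ and $v^*\to v^*(d_2)$, so $\lambda(d_3,m-u^*-v^*)\to\lambda(d_3,m-v^*(d_2))>0$ since $d_3<d_2$, giving instability for large $\alpha$; the location of this transition relative to $\bar d$ (equivalently, relative to $\tfrac{d_3-d_1}{d_2-d_3}\beta$) is read off from Lemma~\ref{uv}. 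For (iii) I would copy the scheme of Lemma~\ref{no}: supposing coexistence states along $\alpha_n\to0^+$ (respectively $\alpha_n\to(\tfrac{d_3-d_1}{d_2-d_3}\beta)^-$), normalize, extract $C^2(\bar\Omega)$ limits, and rule out each degenerate limit $(0,0)$, $(0,w^*(d_3))$, $(u^*(d_1),0)$ by the strong maximum principle together with the strict monotonicity of the scalar principal eigenvalue in the diffusion rate (Proposition~\ref{PE}(c)), exactly as the three cases are dispatched there.

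The main obstacle is the monotonicity step in (i): because the varied coefficient is attached to the slower diffuser, the one-line eigenvalue comparison that closed Lemma~\ref{beta} is unavailable, and one must route through the weighted self-adjoint identity \eqref{EP7A} to recover a usable $d_2$-comparison. A secondary delicacy is making the endpoint limits and the three-case analysis in (iii) rigorous: the limiting operators become reducible or lose a component, so the elliptic estimates and the identification of the limiting principal eigenvalue (again via strict monotonicity in the diffusion rate) must be carried out rather than quoted. The hypothesis $\max_{\bar\Omega}m\le\beta$ enters only to ensure that (H) persists along the whole ray $\alpha\in(0,\infty)$, playing precisely the role that $\max_{\bar\Omega}m\le\alpha$ played for $\beta$ in Lemma~\ref{beta}.
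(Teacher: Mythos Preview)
Your proposal is correct and follows essentially the same route as the paper. In particular, your ``workaround'' for the derivative sign---routing through the weighted identity \eqref{Ineq3}/\eqref{EP7A} so that the negative $d_2$-bracket forces $\int_\Omega(\alpha\phi_1^2-\beta\phi_1\phi_2)>0$ at any zero of $\lambda_2$---is exactly what the paper does (it writes the conclusion as $-d_1\int|\nabla\phi_1|^2+\int(m-w^*)\phi_1^2>0$ via \eqref{Ineq3} rather than via your displayed formula, but the content is the same, and your version has the minor advantage of giving $\lambda_2'<0$ whenever $\lambda_2\ge0$, not just at zeros). The paper likewise spends its explicit effort on the large-$\alpha$ endpoint, carrying out precisely the normalization-and-compactness argument you flag as the ``secondary delicacy''; for the small-$\alpha$ side it simply invokes the $d_3\ge\bar d$ case of Lemma~\ref{w} to get $\lambda_2>0$ on the whole interval $(0,\tfrac{d_3-d_1}{d_2-d_3}\beta]$, which is cleaner than passing to the reducible limit you sketch but leads to the same conclusion.
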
 
 \begin{proof}
 	We only prove Statement (i) when $\alpha$ is large, as the other cases are analogous to the proof in Lemma \ref{beta}.
 	By an argument similar to those in Lemma \ref{beta}, we have $\lambda_2(\alpha)$ is continuously differentiable in $\alpha>0$, and $\lambda(\alpha)>0$ when $\alpha\le\frac{d_3-d_1}{d_2-d_3}\beta$. A  direct computation shows that $\displaystyle\lambda'_2(\alpha)=\frac{\int_\Omega \beta\phi_1\phi_2-\alpha\phi^2_1}{\int_{\Omega}\alpha\phi^2_1+\beta\phi^2_2}$.
 	If $\lambda_2(\alpha)=0$ for some $\alpha>0$, then we see from \eqref{EP7} and \eqref{Ineq3} that $\int_{\Omega}\alpha \phi^2_1-\beta\phi_1\phi_2=-d_1\int_\Omega|\triangledown \phi_1|^2+\int_{\Omega}(m-w^*)\phi_1^{2}>0$. Therefore, $\lambda'_2(\alpha)<0$ when $\lambda_2(\alpha)=0$. This implies that $\lambda_2(\alpha)$ can change signs at most once.
 	Suppose $\lambda_2(\alpha)$ does not change signs; that is, $\lambda_2(\alpha)>0,\,\forall \alpha>0$. Since $\int_{\Omega} \beta\phi^2_2-\alpha\phi_1\phi_2+\lambda_2\int_\Omega{\phi_2^2}=-d_2\int_\Omega|\triangledown \phi_2|^2+\int_{\Omega}(m-w^*)\phi_2^{2}<0$ and $$\alpha[\int_{\Omega}\alpha \phi^2_1-\beta\phi_1\phi_2] +\beta [\int_{\Omega} \beta\phi^2_2-\alpha\phi_1\phi_2]\ge0,$$
 we have  $\int_{\Omega}\alpha \phi^2_1-\beta\phi_1\phi_2>0$, so is $\lambda'(\alpha)<0$.  Hence
 	 $\lambda_2(\alpha)$ is strictly decreasing in $\alpha>0$ and  uniformly bounded from below. Let  $(\phi_{1\alpha},\phi_{2\alpha})\in C^{2,\nu}(\bar\Omega,\R^2_+)$ with $\int_{\Omega}\alpha\phi_{1\alpha}^2+\beta\phi^2_{2\alpha}=1$ be the associated eigenfunction with $\lambda_2(\alpha)$.  Then $\lambda_2(\alpha)\to\lambda_{\infty}\ge0$,  as $\alpha\to\infty$. 
 
A straightforward calculation indicates that
\begin{align*}\int_\Omega\phi^2_{1\alpha}&=\frac{-d_1\int_\Omega|\triangledown \phi_{1\alpha}|^2+\int_{\Omega}(m-w^*-\lambda_2)\phi_{1\alpha}^{2}+\beta\phi_{1\alpha}\phi_{2\alpha}}{\alpha}\\
&\le \frac{\bar m+\beta}{\alpha}\to 0,\quad \alpha\to\infty,
\end{align*}
and $$0<\int_{\Omega}\alpha\phi^2_{1\alpha}-\beta\phi_{1\alpha}\phi_{2\alpha}= -d_1\int_\Omega|\triangledown \phi_{1\alpha}|^2+\int_{\Omega}(m-w^*-\lambda_2)\phi_{1\alpha}^{2}   \le \bar{m} \int_\Omega\phi_{1\alpha}^{2} $$ 	
which yields that $\lim\limits_{\alpha\to\infty}\int_\Omega\alpha\phi^2_{1\alpha}=0$ and $\lim\limits_{\alpha\to\infty}\int_{\Omega}\alpha |\triangledown \phi_{1\alpha}|^2=0$ due to the fact from \eqref{Ineq3} that $-\alpha d_1\int_\Omega|\triangledown \phi_{1\alpha}|^2+\alpha\int_{\Omega}(m-w^*)\phi_{1\alpha}^{2}>0$.  In view of identity \eqref{Ineq3} again, we have
$$\int_\Omega(\alpha\phi_{1\alpha}-\beta\phi_{2\alpha})^2\le \int_{\Omega}(m-w^*)\alpha\phi^2_{1\alpha}\le \bar{m}\int_{\Omega}\alpha\phi^2_{1\alpha}\to0, \quad \alpha\to \infty.$$
Therefore, $\|\alpha\phi_{1\alpha}-\beta\phi_{2\alpha}\|_{L^2(\Omega)}\to 0$ as $\alpha\to\infty$. 

Now replace $\lambda_2$ and $(\phi_{1},\phi_2)$ by $\lambda_2(\alpha)$ and $(\phi_{1\alpha},\phi_{2\alpha})$ in \eqref{Ineq3} and let $\alpha\to\infty$. Then we obtain
$$\lim_{\alpha\to\infty}-d_2\int_\Omega|\triangledown \phi_{2\alpha}|^2+\int_{\Omega}(m-w^*)\phi_{2\alpha}^{2}=\lambda_\infty\ge0.$$
Indeed, $\lambda_{\infty}=0$ due to the fact that $-d_2\int_\Omega|\triangledown \phi_{2\alpha}|^2+\int_{\Omega}(m-w^*)\phi_{2\alpha}^{2}<0$ for any $\alpha>0$. 
Now we see that $\phi_{2\alpha}$ is bounded in $H^1(\Omega)$ when $\alpha$ is large. This implies (up to a subsequence if necessary) $\phi_{2\alpha}\to \phi_{\infty}$ in $L^2(\Omega)$ with $\|\sqrt{\beta}\phi_{\infty}\|_{L^2(\Omega)}=1$.  Let $A\phi:= -d_2\Delta\phi-(m-C)\phi$ for some large $C>\lambda(d_2,m)$. Then $A^{-1}:L^2(\Omega)\to H^2(\Omega)$ is a continuous operator. Passing to the limit in  $\phi_{2\alpha}=A^{-1}[\alpha\phi_{1\alpha}-\beta\phi_{2\alpha}+(C-\lambda_2(\alpha))\phi_{2\alpha}]$, we get $\phi_{\infty} =A^{-1}((C-\beta)\phi_{\infty})$.   Standard elliptic regularity	implies $\phi_{\infty} \in C^{1,\nu}(\bar\Omega)$, and hence, $-d_2\int_\Omega|\triangledown \phi_{\infty}|^2+\int_{\Omega}(m-w^*)\phi_{\infty}^{2}=0$ and $\phi_{\infty}\not\equiv0$, that is, $0\le\lambda(d_2,m-w^*)<\lambda(d_3,m-w^*)=0$, a contradiction. 

It follows immediately that $\lambda_2(\alpha)$ changes sign once and has a unique $\alpha_c\in(0,\frac{d_3-d_1}{d_2-d_3}\beta)$ such that $\lambda_2(\alpha_c)=0$.\end{proof}

Now we are ready to state two parallel results on the global dynamics of the boundary steady state in terms of $\alpha$ and $\beta$, respectively. They follow from the same arguments based on monotone dynamical systems that are used in Theorem 3.6.
 \begin{theorem}
 	Assume that $d_1<d_3<d_2$ and $\max_{x\in\bar\Omega}m(x)\le\alpha$. Then there exist some $0<C_1\le C_2<\frac{d_2-d_3}{d_3-d_1}\alpha$ such that the following statements are valid.
 	\begin{enumerate}
 		\item[(i)] $(0,0,w^*)$ is globally asymptotically stable in $X^+_1\times (X^+_2\setminus\{0\})$ when $\beta\in(0,C_1)$.
 		\item[(ii)] $(u^*,v^*,0)$ is globally asymptotically stable in $(X^+_1\setminus\{0\})\times X^+_2$ when $\beta\in(C_2,\infty)$.
 		 
 	\end{enumerate}
 \end{theorem}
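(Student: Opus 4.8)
The plan is to reproduce the proof of Theorem 3.6 almost verbatim, with the switching rate $\beta$ playing the role of the bifurcation parameter $d_3$ and with Lemma \ref{beta} supplying the eigenvalue and nonexistence information that Lemmas \ref{uv}, \ref{w} and \ref{no} supplied there. First I would record the monotone structure. Since $\max_{\bar\Omega}m\le\alpha$, Proposition \ref{Cases} and the trapping argument for the subsystem \eqref{subPDEModel} show that the $(u,v)$-dynamics of \eqref{PDEModel2} are eventually confined to a contracting rectangle on which the switching subsystem is cooperative (the presence of $w\ge0$ only decreases $u,v$, so cooperativity is preserved). Hence \eqref{PDEModel2} generates a monotone semiflow on $X=X_1\times X_2$ for the cone $K=X^+_1\times(-X^+_2)$, with $\mathrm{Int}\,K=\mathrm{Int}\,X^+_1\times(-\mathrm{Int}\,X^+_2)$, exactly as in the proof of Theorem 3.6. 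Taking $E_0=(0,0,0)$, $E_1=(0,0,w^*)$, $E_2=(u^*,v^*,0)$ as the only candidate semi-trivial states (Proposition \ref{SS}), the structural hypotheses (H1)--(H4) of \cite{Hsu} hold verbatim: dissipativity follows from Proposition \ref{Contracting} and precompactness of orbits from parabolic regularity, so no new work is needed (see also \cite{Hess2,LM}).

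Next I would feed Lemma \ref{beta} into \cite[Theorem B]{Hsu}. Lemma \ref{beta}(iii) provides an $\epsilon>0$ for which \eqref{PDEModel2} has no coexistence state whenever $\beta\in(0,\epsilon)\cup(\frac{d_2-d_3}{d_3-d_1}\alpha-\epsilon,\infty)$, and for any such $\beta$ Theorem B forces one of $E_1,E_2$ to be globally asymptotically stable in the appropriate interior. The linear-stability dichotomy of Lemma \ref{beta}(i)--(ii) then identifies the attractor: for $\beta$ near $0$ the state $E_1=(0,0,w^*)$ is linearly stable while $E_2=(u^*,v^*,0)$ is linearly unstable, so $E_1$ attracts; for $\beta$ near $\frac{d_2-d_3}{d_3-d_1}\alpha$ the stabilities are reversed and $E_2$ attracts. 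This establishes (i) and (ii) on one-sided neighbourhoods of the two endpoints.

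To reach the maximal ranges I would define, in exact parallel with Theorem 3.6,
\[
C_1:=\sup\{d:\ \eqref{PDEModel2}\text{ has no coexistence state on }(0,d)\},
\]
\[
C_2:=\inf\{d:\ \eqref{PDEModel2}\text{ has no coexistence state on }(d,\infty)\}.
\]
Lemma \ref{beta}(iii) gives $0<\epsilon\le C_1$ and $C_2\le\frac{d_2-d_3}{d_3-d_1}\alpha-\epsilon<\frac{d_2-d_3}{d_3-d_1}\alpha$. On the connected interval $(0,C_1)$ no coexistence state exists, so by Theorem B a single boundary equilibrium is globally attracting for every such $\beta$; since the identity of that equilibrium can change only through the bifurcation of a coexistence state, and none exists on $(0,C_1)$, it must coincide with the attractor at $\beta=0^+$, namely $E_1$, throughout. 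The same reasoning gives $E_2$ globally attracting on $(C_2,\infty)$. Finally, the two no-coexistence intervals cannot overlap, since any $\beta$ at which both $E_1,E_2$ are stable or both unstable must carry a coexistence state; hence $0<C_1\le C_2<\frac{d_2-d_3}{d_3-d_1}\alpha$, and writing $\beta_c$ for the unique root of $\lambda_2(\beta)=0$ from Lemma \ref{beta}(i) one has $C_1\le\beta_c\le C_2$, which also confirms that $E_1$ (resp. $E_2$) remains linearly stable on the whole interval claimed.

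I expect the only genuine obstacle to lie, as in the proof of Theorem 3.6, in the gap $[C_1,C_2]$: for intermediate switching rates I cannot exclude coexistence states nor prove the threshold is unique, because the scalar comparison device of \cite{HK} that yields such uniqueness for a single equation has no counterpart for the cooperative $(u,v)$-system. This is precisely the limitation already flagged in the Remark following Lemma \ref{w}, and it is the reason the conclusion is asserted only on $(0,C_1)$ and $(C_2,\infty)$ rather than across a single threshold. Everything else is a faithful transcription of the monotone-dynamics argument previously carried out for varying $d_3$.
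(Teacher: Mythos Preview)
Your proposal is correct and follows essentially the same approach as the paper, which merely states that the result ``follow[s] from the same arguments based on monotone dynamical systems that are used in Theorem~3.6'' (i.e., the $d_3$-version). You have accurately identified that Lemma~\ref{beta} replaces Lemmas~\ref{uv}--\ref{no}, fed this into \cite[Theorem~B]{Hsu} under the same cone $K=X_1^+\times(-X_2^+)$, and defined $C_1,C_2$ via the identical sup/inf of no-coexistence intervals; your added remarks on why the attractor identity persists across each interval and why $C_1\le\beta_c\le C_2$ simply flesh out what the paper leaves as ``it easily follows.''
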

 \begin{theorem}
 	Assume that $d_1<d_3<d_2$ and $\max_{x\in\bar\Omega}m(x)\le\beta$. Then there exist some $0<C_1\le C_2<\frac{d_3-d_1}{d_2-d_3}\beta$ such that the following statements are valid.
 	\begin{enumerate}
 		\item[(i)] $(0,0,w^*)$ is globally asymptotically stable in $X^+_1\times (X^+_2\setminus\{0\})$ when $\alpha\in(C_2,\infty)$.
 		\item[(ii)] $(u^*,v^*,0)$ is globally asymptotically stable  in $(X^+_1\setminus\{0\})\times X^+_2$ when  $\alpha\in(0,C_1)$. 
 	\end{enumerate}
 \end{theorem}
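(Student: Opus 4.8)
The plan is to run the argument of Theorem~3.6 essentially verbatim, replacing the diffusion rate $d_3$ by the switching rate $\alpha$ and letting Lemma~\ref{alpha} play the role that Lemmas~\ref{uv}, \ref{w} and \ref{no} played there. First I would fix every parameter except $\alpha$ and recall that, under $d_1<d_3<d_2$ and $\max_{\bar\Omega}m\le\beta$, system \eqref{PDEModel2} generates a monotone semiflow on $X=X_1\times X_2$ with respect to the cooperative--cooperative--competitive cone $K=X^+_1\times(-X^+_2)$. Setting $E_0=(0,0,0)$, $E_1=(0,0,w^*)$ and $E_2=(u^*,v^*,0)$, I would verify that hypotheses (H1)--(H4) of \cite{Hsu} hold (as in Theorem~3.6; see also \cite{LM,Hess2}); these are structural and do not care whether the varying parameter is $d_3$ or $\alpha$.

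The core input is Lemma~\ref{alpha}. For $\alpha$ near $0$ it gives that $E_2=(u^*,v^*,0)$ is linearly stable, $E_1=(0,0,w^*)$ is linearly unstable, and there is no coexistence state; by \cite[Theorem~B]{Hsu} this forces $E_2$ to be globally asymptotically stable, which is statement~(ii) on a right-neighborhood of $0$. For $\alpha$ near and above $\frac{d_3-d_1}{d_2-d_3}\beta$ the three conclusions of Lemma~\ref{alpha} reverse the roles of $E_1$ and $E_2$, so the same theorem yields global asymptotic stability of $E_1=(0,0,w^*)$, which is statement~(i) on a left-neighborhood of $\frac{d_3-d_1}{d_2-d_3}\beta$ and beyond. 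Note the reversal of direction relative to the $\beta$-parametrized theorem: here small $\alpha$ favors the switching population while large $\alpha$ favors the single-rate competitor.

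To obtain the sharp thresholds I would then define
$$C_1:=\sup\{\alpha>0:\ \eqref{PDEModel2}\ \text{has no coexistence state for all}\ \alpha'\in(0,\alpha)\},$$
$$C_2:=\inf\{\alpha>0:\ \eqref{PDEModel2}\ \text{has no coexistence state for all}\ \alpha'\in(\alpha,\infty)\}.$$
By Lemma~\ref{alpha}(iii) we have $C_1\ge\epsilon>0$ and $C_2\le\frac{d_3-d_1}{d_2-d_3}\beta-\epsilon<\frac{d_3-d_1}{d_2-d_3}\beta$. On the connected coexistence-free interval $(0,C_1)$ the competitive dichotomy of \cite{Hsu} again applies, and the globally attracting semi-trivial state cannot switch without the system passing through a coexistence state; since it is $E_2$ near $\alpha=0$, it remains $E_2$ throughout $(0,C_1)$, giving (ii). The same reasoning on $(C_2,\infty)$ gives (i) with $E_1$. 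Finally $C_1\le C_2$ follows because $E_1$ and $E_2$ cannot both be globally attracting at the same $\alpha$, so the two intervals $(0,C_1)$ and $(C_2,\infty)$ must be disjoint.

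The main obstacle, exactly as flagged in the remark following Theorem~3.6, is that one cannot prove the root of $\lambda_2(\alpha)=0$ is the unique transition value, nor rule out coexistence states in the intermediate window $[C_1,C_2]$; this is why the conclusion must be stated with a possible gap rather than a single critical $\alpha$. The genuinely analytic work, namely the continuous differentiability and sign-monotonicity of $\lambda_2(\alpha)$ and the delicate $\alpha\to\infty$ limit used to locate $\alpha_c$, has already been done in Lemma~\ref{alpha}, so the present argument is essentially a transcription of the monotone-dynamical-systems machinery of Theorem~3.6 to the parameter $\alpha$.
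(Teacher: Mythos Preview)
Your proposal is correct and follows exactly the approach the paper intends: the paper does not give a separate proof for this theorem but simply states (just before the two parallel theorems) that ``they follow from the same arguments based on monotone dynamical systems that are used in Theorem~3.6,'' i.e., one reruns the Hsu--Smith--Waltman machinery with Lemma~\ref{alpha} supplying the stability/instability/no-coexistence input in place of Lemmas~\ref{uv}, \ref{w}, \ref{no}. Your write-up is in fact more detailed than the paper's own sketch, and the definition of $C_1,C_2$ via maximal coexistence-free intervals mirrors the construction in the $d_3$-parametrized theorem verbatim.
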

 
  \section{Conclusions}

 For the two-component subsystem \eqref{PDEModel} we have derived conditions under which it is asymptotically competitive or cooperative.  In the asymptotically cooperative case we have derived further conditions implying the existence of a unique globally stable positive equilibrium. We have obtained various eigenvalue estimates that determine the stability of the trivial solution $(0,0)$.  Some of the results for \eqref{PDEModel} are extensions of those in \cite{CCY} to cases where some coefficients may vary in $x$.   We should note that we have not been able to give a complete analysis of the stability of $(0,0)$ in the indefinite case, that is, where the local population growth rate $m(x)$ can change sign, reflecting  the presence of both sources and sinks in the overall environment.  This is due to the fact that we do not know of an extension of a key result from \cite{HK} to systems of equations.  A major reason why results implying that the sub-model \eqref{PDEModel} has a unique globally attracting positive equilibrium are interesting is that in such a case  \eqref{PDEModel} behaves like a single logistic equation and hence it is reasonable to view the  populations  $u$ and $v$ together as a single population consisting of individuals that can switch their dispersal behavior.  \\
  
  The main problem motivating this paper  was that of understanding how well a population whose members can switch between slow and fast diffusion rates $d_1$ and $d_2$ could compete against an ecologically identical population   where all individuals diffuse at a single intermediate rate $d_3$.   What we found was that if $d_3< d_1<d_2$ then the semi trivial equilibrium 
  $(u^*,v^*,0)$  of \eqref{PDEModel2} is unstable and $(0,0,w^*)$ is stable, while if $d_1<(\alpha d_1+\beta d_2)/(\alpha+\beta)<d_3$ then $(u^*,v^*,0)$  stable and $(0,0,w^*)$ is unstable.  Furthermore, both semi trivial equilibria change their stability for  some values of $d_3$ in the interval $(d_1,(\alpha d_1+\beta d_2)/(\alpha+\beta))$.  Thus, the size of the diffusion rate $d_3$ relative to the average of diffusion rates $d_1$ and $d_2$ weighted by the switching rates $\alpha$ and $\beta$ seems to be informative about which of the populations $(u,v)$ and $w$ has the advantage.  In some cases we were able to show the nonexistence of a positive (coexistence) equilibrium for \eqref{PDEModel2}, which then implies competitive exclusion when combined with suitable results  on stability of semi trivial equilibria.  \\
  
  There remain many challenging open questions about \eqref{PDEModel2} and related models. In the case where $m(x)$ changes sign, we do not have a uniqueness result for the principal eigenvalue of the linearized model corresponding to the sub model \eqref{PDEModel}.   Since we can show that the semi trivial equilibria can change stability as $d_3$ or $\alpha$ or $\beta$ vary we expect that the system \eqref{PDEModel2}  will have bifurcations that produce coexistence states (which might be unstable), but we have not explored a bifurcation theoretic approach, and we do not have enough information about the relative locations relative to $d_3$ of the points where the stabilities of $(u^*,v^*,0)$ and $(0,0,w^*)$ change to use monotone methods to show the presence of coexistence states. It should be possible to address these and other questions but that will require additional research.  In a different direction, it would be interesting to consider models with different types of dipsersal operators, boundary conditions, or interaction terms.  Another topic of interest would be to try to see if and when adaptive switching that mimics area restricted search (that is, switching that is biased toward slower diffusion at locations where $m(x)$ is large, but toward faster diffusion where  $m(x)$ is small) is advantageous versus diffusion at a fixed rate everywhere. Some numerical results about this type of phenomenon in a more realistic dispersal model are given in \cite{Fagan}.  In general, the idea that  organisms switch between different movement modes has considerable empirical support and leads to mathematical models whose analysis is challenging but within the scope of current mathematical methods. For those reasons we think dispersal models with switching are an interesting topic for further study.

\end{document}